\documentclass[12pt,a4paper,leqno]{amsart}

\usepackage[top=1.1in, bottom=1.3in, left=1.5in, right=1.5in]{geometry}

\usepackage[ansinew]{inputenc}
\usepackage[T1]{fontenc}
\usepackage[english]{babel}
\usepackage{amsthm}
\usepackage{amsfonts}         
\usepackage{amsmath}
\usepackage{amssymb}
\usepackage{breqn}
\usepackage{bm}
\usepackage{url}
\usepackage{esint}
\usepackage{fancyhdr}

\newcommand{\R}{\mathbb{R}}

\newcommand{\N}{\mathbb{N}}

\newcommand{\Z}{\mathbb{Z}}

\newcommand{\A}{\mathcal{A}}

\newcommand{\HH}{\mathcal{H}}
\newcommand{\PP}{\mathbb{P}}
\newcommand{\LL}{\mathbb{L}}
\newcommand{\BB}{\mathbb{B}}

\theoremstyle{plain}
\newtheorem{theorem}{Theorem}
\newtheorem{lemma}[theorem]{Lemma}
\newtheorem{prop}[theorem]{Proposition}
\newtheorem{cor}[theorem]{Corollary}

\theoremstyle{remark}
\newtheorem{remark}{Remark}

\addtolength{\hoffset}{-1.15cm}
\addtolength{\textwidth}{2.3cm}
\addtolength{\voffset}{0.45cm}
\addtolength{\textheight}{-0.9cm}
\pagenumbering{arabic}
\numberwithin{equation}{section}
\setcounter{tocdepth}{1}

\begin{document}

\title{Limit points of normalized prime gaps}

\author{Jori Merikoski}

\address{Department of Mathematics and Statistics, University of Turku, FI-20014 University of Turku,
Finland}
\email{jori.e.merikoski@utu.fi}

\begin{abstract} We show that at least 1/3 of positive real numbers are in the set of limit points of normalized prime gaps. More precisely, if $p_n$ denotes the $n$th prime and $\LL$ is the set of limit points of the sequence $\{(p_{n+1}-p_n)/\log p_n\}_{n=1}^\infty,$ then for all $T\geq 0$ the Lebesque measure of $\LL \cap [0,T]$ is at least $T/3.$ This improves the result of Pintz (2015) that  the Lebesque measure of $\LL \cap [0,T]$ is at least $(1/4-o(1))T,$ which was obtained by a refinement of the previous ideas of Banks, Freiberg, and Maynard (2015). Our improvement comes from using Chen's sieve to give, for a certain sum over prime pairs, a better upper bound than what can be obtained using Selberg's sieve. Even though this improvement is small, a modification of the arguments Pintz and Banks, Freiberg, and Maynard shows that this is sufficient. In addition, we show that there exists a constant $C$ such that for all $T \geq 0$ we have $\LL \cap [T,T+C] \neq \emptyset,$ that is, gaps between limit points are bounded by an absolute constant.
\end{abstract}

\maketitle
\tableofcontents

\section{Introduction and main results}
The Prime Number Theorem tells us that the gap $p_{n+1}-p_n$ between consecutive primes is asymptotically $\log p_n$ on average ($p_n$ denotes the $n$th prime). It is therefore reasonable to consider the distribution of the normalized prime gaps $(p_{n+1}-p_n)/\log p_n;$ by heuristics given by Cram\'er's model we expect that for all $b>a \geq0$
\begin{align} \label{heur}
\frac{1}{N} \bigg\{ n \leq N: \, (p_{n+1}-p_n)/\log p_n \in [a,b] \bigg\} \sim \int_a^b e^{-u} \, du, \quad \quad N \to \infty.
\end{align}
That is, we expect the sequence of normalized prime gaps to satisfy a Poisson distribution (cf. Soundararajan's account \cite{Sound} for details). Gallagher \cite{Gal} has shown this to be true assuming a sufficiently uniform version of the Hardy-Littlewood conjecture. 

To approach (\ref{heur}), consider the following conjecture of Erd\"os \cite{Erdos}: if  $\LL$ denotes the set limit points of the sequence $\{(p_{n+1}-p_n)/\log p_n\}_{n=1}^\infty,$ then $\LL = [0,\infty].$ By the 1931 result of Westzynthius \cite{West} we know that $\infty \in \LL,$ and from the seminal work of Goldston, Pintz and Y\i ld\i r\i m \cite{GPY} it follows that $0 \in \LL.$  Besides $0$ and $\infty$ no other real number is known to be in $\LL$.

It is known that $\LL$ has a positive Lebesque measure (Erd\"os \cite{Erdos} and Ricci \cite{Ricci}). Goldston and Ledoan \cite{GL} extended the method of Erd\"os to show that intervals of certain specific form, e.g. $[1/8,2]$, contain limit points. In addition, Pintz \cite{Pintz2} has shown that there is an ineffective constant $c$ such that $[0,c] \subseteq \LL$ (by applying the ground-breaking work of Zhang \cite{Zhang} on bounded gaps between primes).

Note that $\LL$ is Lebesque-measurable since it is a closed set. Hildebrand and Maier \cite{HM} showed that there exists a positive constant $c$ such that the Lebesque measure of $\LL \cap [0,T] $ is at least $ cT$ for all sufficiently large $T$.  Following the breakthrough of Maynard \cite{May} on bounded gaps between primes, it was proved by Banks, Freiberg and Maynard \cite{BFM} that this holds with $c=1/8 -o(1),$ that is, asymptotically at least 1/8 of positive real numbers are limit points. Pintz \cite{Pintz} improved this to $c=1/4 -o(1)$ by modifying the argument of \cite{BFM}; this was shown by Pintz for more general normalizations also. This was then extended to more general and especially larger normalizing factors than $\log p_n$ by Freiberg and Baker \cite{BF}, by combining the arguments with the work of Ford, Green, Konyagin, Maynard, and Tao \cite{Ford} on long prime gaps. 

For clarity we only consider the set of limit points $\LL$ with the logarithmic normalization as defined above. Our main results are deduced from the following
\begin{theorem} \label{maint}
Let $\beta_1 \leq \beta_2 \leq \beta_3 \leq \beta_4$ be any real numbers. Then
\begin{align*}
\LL \cap \{\beta_j -\beta_i: \, \, 1 \leq i < j \leq 4\} \neq \emptyset.
\end{align*}
\end{theorem}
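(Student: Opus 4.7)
The plan is to combine the Maynard--Tao sieve with an upper bound for prime pairs obtained via Chen's sieve, following the BFM/Pintz scheme with this improved sieve input. Given $\beta_1 \leq \beta_2 \leq \beta_3 \leq \beta_4$, if all four coincide the statement reduces to $0 \in \LL$, which is \cite{GPY}; so assume $\beta_1 < \beta_4$. The strategy is to produce, along a sequence $N \to \infty$, an $n \in [N, 2N]$ together with $1 \leq i < j \leq 4$ such that $n + r_i$ and $n + r_j$ are \emph{consecutive} primes, where $r_\ell = \lfloor \beta_\ell R \rfloor$ and $R \asymp \log N$. Since $(i,j)$ takes only six values, pigeonhole produces a pair that recurs infinitely often, and for that pair the normalized gap converges to $\beta_j - \beta_i$, placing it in $\LL$.

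For a scale $N$, fix $R \asymp \log N$, set $r_\ell = \lfloor \beta_\ell R \rfloor$, and embed $\{r_1, r_2, r_3, r_4\}$ into an admissible $k$-tuple $\HH = \{h_1, \dots, h_k\}$ with $k$ large. Applying the Maynard--Tao sieve with nonnegative weights $w(n)$ on $n \in [N, 2N]$, built from a smooth test function $F$ concentrated on the four privileged coordinates (as in \cite{BFM} and refined in \cite{Pintz}), one arranges
\begin{equation*}
\sum_{n \in [N, 2N]} w(n) \sum_{\ell=1}^{4} \mathbf{1}_\PP(n + r_\ell) \;\geq\; (\rho + o(1)) \sum_{n \in [N, 2N]} w(n)
\end{equation*}
with $\rho > 1$, so a positive proportion of the weighted $n$ have at least two primes among $n + r_1, \dots, n + r_4$.

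To upgrade this from ``two primes at privileged positions'' to ``two consecutive primes'', I would select the closest prime-carrying pair $(r_i, r_j)$ among the four, and then subtract off the event that some other $n + m$ with $m \in (r_i, r_j) \cap \Z \setminus \{r_1, r_2, r_3, r_4\}$ is also prime. Up to straightforward bookkeeping, this reduces to bounding the weighted sum
\begin{equation*}
\sum_{m} \sum_{h \in \{r_1, \dots, r_4\}} \sum_{n \in [N, 2N]} w(n) \mathbf{1}_\PP(n + h) \mathbf{1}_\PP(n + m),
\end{equation*}
a count of weighted prime pairs. In \cite{BFM} and \cite{Pintz} this is controlled by Selberg's sieve, whose upper-bound constant — after optimizing $F$ — pins the measure-theoretic consequence at $\tfrac14$. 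The key new input is Chen's sieve, in its upper-bound form for $\#\{p \leq x : p + d \in \PP\}$ obtained through Buchstab iteration with the switching principle, which supplies a slightly smaller constant than Selberg's. Summed over the $O(R)$ relevant values of $m$, this saving keeps the count of $n$ with a consecutive pair of primes at two of the $r_\ell$ a positive proportion of $\sum_n w(n)$, and the pigeonhole described above concludes the proof.

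The hardest step, I expect, is the quantitative integration of Chen's sieve into the Maynard--Tao averaging. Selberg's sieve fits cleanly into this framework because both structures are divisor-sum convolutions; Chen's sieve, being built on Buchstab identities and a switching trick that treats upper and lower sieves asymmetrically, has a different algebraic shape, and carrying it through the averaging over $n$ (with the Maynard test function) and over $m$ requires care. A secondary challenge is verifying that the (small) numerical saving indeed crosses the exact threshold fixed by the six-difference combinatorics of Theorem~\ref{maint} --- this is the ``even though this improvement is small, \dots this is sufficient'' remark in the abstract, and it will presumably involve a careful re-examination of the BFM/Pintz optimization of $F$.
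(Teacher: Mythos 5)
Your proposal has the right high-level ingredients (Maynard--Tao sieve plus a Chen-sieve upper bound for prime pairs, following BFM/Pintz), but the mechanism you describe for producing \emph{consecutive} primes is not the one the paper uses, and I don't think it can be made to work as stated. You propose to find primes at four isolated positions $n+r_1,\dots,n+r_4$ and then ``subtract off the event that some other $n+m$ with $m$ between $r_i$ and $r_j$ is prime.'' Two problems. First, the Maynard--Tao machinery cannot produce two primes among only four privileged shifts: the sieve needs a $K$-tuple with $K$ very large ($K$ growing to make $\rho > 1$), so each $\beta_i$ must be represented by an entire block $\HH_i$ of $K/4$ shifts, not a single $r_i$. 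Second, and more fundamentally, subtracting off intermediate primes does not close the gap: the sum over the $\asymp \log N$ intermediate values of $m$ of the weighted prime-pair count is not negligible relative to the (very small) excess that produces two primes, because $w(n)$ imposes no condition on $n+m$ for $m \notin \HH$. The paper avoids this entirely by using the modified Erd\H{o}s--Rankin construction (the ``$\mathcal{W}$-trick'' of \cite[Section 5]{BFM}): one fixes a congruence class $n \equiv b \pmod W$ that forces every integer in $(n, n+z]$ outside $n+\HH$ to have a small prime factor. Then any two primes found in distinct blocks $n+\HH_i$, $n+\HH_j$ are automatically consecutive, with no subtraction needed. This construction is essential, not bookkeeping.

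The role of Chen's sieve in the paper is also different from what you describe. It is not used to control stray primes between the blocks; it is used to bound the sum $\sum_n w(n) 1_\PP(n+h)1_\PP(n+h')$ over pairs $h,h'$ \emph{inside} the admissible tuple (Proposition~\ref{pairs}), reducing the Selberg upper-bound constant from $4$ to $3.99$. This is then fed into the Pintz-style counting (Proposition~\ref{weak}): one partitions $\HH$ into $\lceil 3.99a \rceil + 1$ pieces, shows that for some $n$ at least $a+1$ pieces contain a prime, and with $a = 100$ one has $\lceil 3.99a \rceil + 1 = 4a$, so grouping the $4a$ pieces into $4$ super-blocks and pigeonholing puts primes in two distinct super-blocks. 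The strict inequality $3.99 < 4$ is exactly what makes the $a=100$ trick work; with the Selberg constant $4$ one would only get $5$ super-blocks (Pintz's result). You would need to reproduce this counting structure and the Erd\H{o}s--Rankin sieving to have a complete proof.

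One smaller point: the pigeonhole ``over $N \to \infty$'' is done slightly differently in the paper. For each large $N$ one finds a pair $(i,j)$ depending on $N$; since there are finitely many pairs, one of them recurs for arbitrarily large $N$, and for that pair the gap is $(\beta_j - \beta_i + o(1))\log N$, placing $\beta_j - \beta_i$ in $\LL$. This matches what you intended, and is fine.
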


The proof of this will be given in Section \ref{mainsec}. We note that \cite[Theorem 1.1]{BFM} gives this for nine real numbers in place of four, and \cite[Theorem 1]{Pintz} is the same but for five real numbers. Using the same argument as in the proof of \cite[Corollary 1.2]{BFM} this implies that the Lebesque measure of  $\LL \cap [0,T]$ is $\geq  (1/3-o(1))T$ as $T \to \infty$, where the $o(1)$ is ineffective. Using a more elaborate construction based on similar ideas we will show below
\begin{cor} \label{cor1} 
For all $T > 0$ we have
\begin{align*}
\mu (\LL \cap [0,T])  \geq  T/3,
\end{align*}
where $\mu$ denotes the Lebesque measure on $\R$.
\end{cor}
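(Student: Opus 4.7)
My plan is to specialise Theorem~\ref{maint} to a four-term arithmetic progression, extract a clean scaling-invariant functional inequality for $F(T) := \mu(\LL \cap [0,T])$, and then close the bound pointwise by a short topological argument using Pintz's inclusion $[0,c] \subseteq \LL$ \cite{Pintz2} (for some $c > 0$) as a base case. Taking $\beta_j := (j-1)d$ for $j = 1,2,3,4$ and $d > 0$, the six pairwise differences collapse to the three distinct values $d, 2d, 3d$, so Theorem~\ref{maint} gives that at least one of $d, 2d, 3d$ lies in $\LL$ for every $d > 0$. For any $T > 0$, integrating the indicator inequality $\mathbf{1}_{\LL}(d) + \mathbf{1}_{\LL}(2d) + \mathbf{1}_{\LL}(3d) \geq 1$ over $d \in (0, T/3]$ and substituting $u = jd$ in the $j$th term yields $F(T/3) + \tfrac{1}{2} F(2T/3) + \tfrac{1}{3} F(T) \geq T/3$; dividing through by $T$ and setting $f(T) := F(T)/T$ gives the scale-invariant inequality
\[
f(T/3) + f(2T/3) + f(T) \;\geq\; 1 \qquad \text{for all } T > 0.
\]

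With this in hand the proof closes by contradiction. The function $F$ is continuous (Lebesgue measure has no atoms), so $f$ is continuous on $(0,\infty)$ and $A := \{T > 0 : f(T) < 1/3\}$ is open. Pintz's inclusion $[0,c] \subseteq \LL$ gives $f \equiv 1$ on $(0,c]$, so $A \subseteq (c,\infty)$ and $T_0 := \inf A \geq c > 0$. Assume $A \neq \emptyset$; by openness $T_0 \notin A$, and one may choose $T \in A$ with $T_0 < T < 3T_0/2$. For such $T$, both $T/3$ and $2T/3$ lie in $(0, T_0)$, hence outside $A$, so $f(T/3), f(2T/3) \geq 1/3$; substituting into the displayed inequality forces $f(T) \geq 1 - \tfrac{1}{3} - \tfrac{1}{3} = 1/3$, contradicting $T \in A$. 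Hence $A = \emptyset$ and $f(T) \geq 1/3$ for every $T > 0$, which is the claim.

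The only real obstacle is the base case: the displayed inequality is scale-invariant and is perfectly compatible with $f$ dipping below $1/3$ at isolated values of $T$, so some absolute initial bound $f \geq 1/3$ on an initial interval is needed to ignite the inductive step. Pintz's inclusion $[0,c] \subseteq \LL$ (built on Zhang's bounded gaps) supplies this with plenty of room (indeed $f \equiv 1$ there), which is precisely what the asymptotic BFM-style argument lacks and is the source of the ineffective $o(1)$ error there; exploiting it allows one to upgrade $(1/3 - o(1))T$ to the pointwise $T/3$.
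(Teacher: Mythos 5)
Your derivation of the functional inequality is sound: specialising Theorem~\ref{maint} to $\beta_j = (j-1)d$ does give $\LL \cap \{d,2d,3d\} \neq \emptyset$ for every $d>0$, and integrating $\mathbf{1}_{\LL}(d)+\mathbf{1}_{\LL}(2d)+\mathbf{1}_{\LL}(3d) \geq 1$ over $d \in (0,T/3]$ with the substitutions $u=jd$ correctly yields $F(T/3) + \tfrac12 F(2T/3) + \tfrac13 F(T) \geq T/3$, equivalently $f(T/3) + f(2T/3) + f(T) \geq 1$. That part is fine.

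The closure step, however, has the inequality pointing the wrong way and does not establish the claim. The functional inequality rearranges to
\begin{align*}
f(T) \;\geq\; 1 - f(T/3) - f(2T/3),
\end{align*}
so to conclude $f(T) \geq 1/3$ you would need \emph{upper} bounds $f(T/3) \leq 1/3$ and $f(2T/3) \leq 1/3$. What you have from $T/3, 2T/3 \notin A$ is the opposite: the \emph{lower} bounds $f(T/3), f(2T/3) \geq 1/3$. Lower bounds on those two terms only make the right-hand side $1 - f(T/3) - f(2T/3)$ \emph{smaller}, hence weaker; for instance $f(T/3) = f(2T/3) = 1/2$ is perfectly consistent with $T/3, 2T/3 \notin A$ and gives the vacuous bound $f(T) \geq 0$. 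Concretely, once two of the three terms already account for at least $2/3$ of the sum, the inequality places no nontrivial constraint on the third. So the step "substituting $f(T/3),f(2T/3)\geq 1/3$ forces $f(T)\geq 1/3$" is a sign error, and the contradiction never materialises. (The appeal to Pintz's $[0,c]\subseteq\LL$ and the choice $T_0 < T < 3T_0/2$ are both fine as far as they go; the problem is solely that the averaging inequality cannot propagate a lower bound forward in $T$.) The paper avoids this entirely: it does not integrate, but instead proves a general measure-theoretic proposition showing that any measurable $\BB$ with the four-point difference-hitting property satisfies $\mu(\BB\cap[0,T]) \geq T/3$, via an inductive construction of points $r_0 < r_1 < \cdots$ and the genuine set inclusion $[r_j, s_{j+1}) \subseteq \bigcup_{i\leq j}(\BB+r_i)$, from which subadditivity gives the bound directly, with no external input such as Pintz's $[0,c]\subseteq\LL$.
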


Another way to approach the conjecture that $\LL = [0,\infty]$ would be to show that for any given positive real $x$ we can find a limit point close to $x$; using Theorem \ref{maint}, we will show below that gaps between limit points are bounded by an absolute (ineffective) constant (note that this actually follows already from \cite[Theorem 1.1]{BFM}, as is evident from the proof):
\begin{cor} \label{cor2}
There exists a constant $C \geq 0$ such that for all $T\geq 0$ we have 
\begin{align*}
\LL \cap [T,T+C] \neq \emptyset.
\end{align*}
\end{cor}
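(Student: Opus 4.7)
The plan is to apply Theorem~\ref{maint} to the four-point cluster $\{0, s, T, T+s\}$, where $s>0$ is any fixed real number not belonging to $\LL$. The six pairwise differences of such a cluster collapse to only four distinct values, namely $\{s,\, T-s,\, T,\, T+s\}$, and once $s\notin\LL$ has been pinned down, the theorem will be forced to squeeze a limit point into the short window $[T-s,T+s]$ about $T$.

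To secure such an $s$, I first dispose of the trivial case $\LL=[0,\infty)$, in which $C=0$ works. Otherwise, since $\LL$ is closed and contains $0$ (by Goldston--Pintz--Y\i ld\i r\i m), the complement $[0,\infty)\setminus\LL$ is a nonempty open subset of $(0,\infty)$, so I fix any $s>0$ lying in it.

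Given any $T>s$, I then apply Theorem~\ref{maint} with $\beta_1=0,\ \beta_2=s,\ \beta_3=T,\ \beta_4=T+s$. A direct computation shows that the six pairwise differences $\beta_j-\beta_i$ comprise exactly the set $\{s,\,T-s,\,T,\,T+s\}$, with $s$ and $T$ each appearing twice. Since $s\notin\LL$, the theorem forces $\LL\cap\{T-s,T,T+s\}\neq\emptyset$, i.e. $\LL\cap[T-s,T+s]\neq\emptyset$. Now, for an arbitrary $T_0\geq 0$, I substitute $T=T_0+s$: when $T_0>0$ the hypothesis $T>s$ holds and the preceding step yields $\LL\cap[T_0,T_0+2s]\neq\emptyset$, while for $T_0=0$ the conclusion follows trivially from $0\in\LL$. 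Setting $C:=2s$ therefore fulfils the corollary.

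There is essentially no obstacle: the whole argument is a one-shot application of Theorem~\ref{maint} after one notices the redundancy among the six differences on the cluster $\{0,s,T,T+s\}$. The resulting constant $C$ is ineffective because, absent a resolution of Erd\H{o}s's conjecture, we cannot name an explicit $s\notin\LL$; but the existence of such an $s$ is all that the corollary demands.
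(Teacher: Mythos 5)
Your proof is correct, and it takes a genuinely different and more economical route than the paper's. The paper proves a general abstract statement (Proposition 1.8): \emph{any} set $\BB\subseteq[0,\infty)$ satisfying the ``four-point difference'' intersection property must be syndetic. Its argument proceeds via a contradiction (Lemma 1.9) in which one builds a slowly growing function $w$ from a hypothetical unbounded sequence of bad intervals, then selects $k-1$ scattered points $A_j$ with $A_j<w(A_{j+1})$ so that all pairwise differences fall into those bad intervals. Your argument instead exploits two specific facts about $\LL$ — that $0\in\LL$ (Goldston--Pintz--Y\i ld\i r\i m) and that, if $\LL\neq[0,\infty)$, one can fix some $s>0$ with $s\notin\LL$ — and then feeds the cluster $\beta_1=0,\ \beta_2=s,\ \beta_3=T,\ \beta_4=T+s$ into Theorem \ref{maint}. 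The redundancy $\beta_2-\beta_1=\beta_4-\beta_3=s$ and $\beta_3-\beta_1=\beta_4-\beta_2=T$ collapses the six differences to $\{s,T-s,T,T+s\}$; since $s\notin\LL$, Theorem \ref{maint} forces $\LL$ to meet $\{T-s,T,T+s\}\subseteq[T-s,T+s]$, and shifting $T\mapsto T_0+s$ gives $C=2s$. What your proof buys is brevity and a cleanly interpretable constant ($C=2s$, ineffective only because $s$ cannot currently be exhibited); what it gives up is generality — it would not establish Proposition 1.8 for a set $\BB$ not known to contain $0$. Both are valid, and the distinction is worth noting.
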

In the language of combinatorics, a set $A \subseteq [0,\infty]$ is called syndetic if there is a constant $C$ such that every interval of length $C$ intersects with $A$ (cf. \cite{BFW}, for example). Thus, Corollary \ref{cor2} can be rephrased as saying that the set of limit points $\LL$ is syndetic.

\begin{remark} By similar ideas as in the work of Baker and Freiberg \cite{BF},  one can extend our results to other normalizations of prime gaps, replacing $\log p_n$ by a function which can grow somewhat quicker than the logarithm (cf. \cite[Theorem 6.2]{BF} for what normalizations are allowed). We have restricted our attention to the logarithmic normalization to avoid having to define cumbersome notation, with the hope that this makes the article more accessible.
\end{remark}

\subsection{Proof of Corollary \ref{cor1}} 
Corollary \ref{cor1} follows from combining Theorem \ref{maint} with the following general proposition:
\begin{prop} Let $k \geq 2$ and let $\BB \subseteq [0,\infty)$ be any Lebesque-measurable set satisfying the following property:  for any real numbers $\beta_1 \leq \beta_2 \leq \cdots \leq \beta_k$  we have
\begin{align*}
\BB \cap \{\beta_j -\beta_i: \, \, 1 \leq i < j \leq k\} \neq \emptyset.
\end{align*}
Then for any $T >0$ we have
\begin{align*}
\mu (\BB \cap [0,T])  \geq  T/(k-1).
\end{align*}
\end{prop}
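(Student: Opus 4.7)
The plan is to prove the contrapositive. Suppose that for some $T > 0$ we have $\mu(\BB \cap [0,T]) < T/(k-1)$, and set $E := \BB \cap [0,T]$. I aim to construct $0 = \beta_1 < \beta_2 < \cdots < \beta_k < T$ with $\beta_j - \beta_i \notin E$ for all $i < j$; since each such difference will then lie in $(0,T)$ and $E = \BB \cap [0,T]$, none of them lies in $\BB$, contradicting the assumed property of $\BB$.

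The construction is a measure-theoretic greedy argument. Fix $\epsilon$ with $0 < \epsilon < T/(k-1) - \mu(E)$ and set $\beta_1 = 0$. Given $\beta_1 < \cdots < \beta_j$ whose pairwise differences all lie outside $E$, define the forbidden set $F_{j+1} := \bigcup_{i=1}^{j}(\beta_i + E)$, which has $\mu(F_{j+1}) \le j\,\mu(E) < \infty$. Pick $\beta_{j+1}$ to be any element of $(\beta_j, \infty) \setminus F_{j+1}$ satisfying $\mu\bigl((\beta_j, \beta_{j+1}) \setminus F_{j+1}\bigr) \le \epsilon$; such a $\beta_{j+1}$ is available by a routine essential-infimum argument since $F_{j+1}^c$ has infinite measure. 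By construction $\beta_{j+1} - \beta_i \notin E$ for all $i \le j$.

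The key estimate is a telescoping bound. The intervals $(\beta_j, \beta_{j+1})$ for $j = 1, \ldots, k-1$ are pairwise disjoint, and $F_{j+1} \subseteq F_k$ for every such $j$. Therefore
\begin{align*}
\beta_k = \sum_{j=1}^{k-1}(\beta_{j+1}-\beta_j) &\le \sum_{j=1}^{k-1}\bigl(\mu\bigl((\beta_j,\beta_{j+1}) \cap F_k\bigr) + \epsilon\bigr) \\
&\le \mu(F_k) + (k-1)\epsilon \le (k-1)\mu(E) + (k-1)\epsilon < T,
\end{align*}
where the last inequality follows from the choice of $\epsilon$. Hence the construction stays inside $[0, T)$ and yields the required contradiction. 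The point to highlight is the telescoping itself: a naive step-by-step union bound would accumulate a factor $\binom{k}{2}$ and give only a bound of order $T/k^2$, whereas exploiting the monotonicity $F_{j+1} \subseteq F_k$ together with the disjointness of the successive intervals charges the mass of $F_k$ only once, which is what produces the sharp $T/(k-1)$ threshold.
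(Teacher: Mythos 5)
Your proof is correct and is essentially the contrapositive of the paper's argument: the paper builds the same kind of greedy sequence $r_0, r_1, \ldots$ and bounds $T = \mu([0,T))$ directly via the covering $[r_j,s_{j+1}) \subseteq \bigcup_{i\le j}(\BB + r_i)$, swapping a double sum and collapsing the disjoint consecutive intervals into $[r_i,T)$, whereas you assume $\mu(\BB\cap[0,T])$ is too small and show $\beta_k < T$. The telescoping/disjointness idea you highlight is exactly the same mechanism the paper uses, just packaged on the other side of the implication.
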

\begin{proof}
For any $m \geq 2$ and for any real numbers $\beta_1,\dots,\beta_m$, define the set of differences
\begin{align*}
\Delta(\beta_1,\dots,\beta_m) := \{\beta_j -\beta_i: \, \, 1 \leq i < j \leq m\}.
\end{align*}
For any $\epsilon > 0,$ let us inductively define increasing sequences of real numbers $r_j$ and $s_j$ as follows:  set $r_0=s_0=0,$ and for $j >0$, having defined $r_0,\dots, r_{j-1}$ and $s_0,\dots,s_{j-1},$ let
\begin{align*}
S_{j}=S_j(r_0,\dots,r_{j-1}) := \{ s > r_{j-1}: \, \Delta(r_0,r_1,\dots, r_{j-1}, s) \cap \BB = \emptyset \}, \quad \quad s_{j} := \inf S_j,
\end{align*}
and pick any $r_j \in S_j$ with $r_j \in [s_j,s_j+\epsilon).$ Then by the assumption on $\BB$ for some $\ell \leq k-1$ we have $s_\ell = \infty$ (i.e. $S_\ell = \emptyset$), and we stop there and set $r_\ell = \infty$.

We now note the following property which holds for all $j \in \{0,1,\dots,\ell-1\}:$ since $s_{j+1}$ is the infimum of $S_{j+1}$, for any $t \in [r_j,s_{j+1})$  we have $\Delta(r_0,r_1,\dots, r_{j}, t) \cap \BB \neq \emptyset$. Since $\Delta(r_0,r_1,\dots, r_{j}) \cap \BB = \emptyset,$ this implies that
\begin{align*}
[r_j,s_{j+1}) \subseteq \bigcup_{i=0}^j (\BB+r_i).
\end{align*}
Hence, for all $t \in (r_j,s_{j+1}]$ we have by sub-additivity 
\begin{align} \label{tin}
\mu([r_j,t)) =\mu \bigg([r_j,t)\cap \bigcup_{i=0}^j (\BB+r_i)\bigg) \leq \sum_{i=0}^j \mu\left( [r_j,t)\cap (\BB+r_i)\right).
\end{align}

Let $T > 0.$ Then there is some $\lambda \leq \ell -1$ with $T \in (r_\lambda, r_{\lambda+1}]$ (since $r_\ell = \infty$). Denote $\tilde{T}= \min \{T,s_{\lambda+1}\}$. Then, by using $r_j < s_j+\epsilon$, we have
\begin{align*}
\mu([0,T)) = \sum_{j=0}^{\lambda-1}\mu([r_j,r_{j+1})) + \mu([r_\lambda,T)) 
\leq (\lambda+1)\epsilon +  \sum_{j=0}^{\lambda-1}\mu([r_j,s_{j+1})) + \mu([r_\lambda,\tilde{T})) .
\end{align*}
By using (\ref{tin}) for all of the summands we get
\begin{align*}
T=\mu([0,T)) & \leq (\lambda+1)\epsilon +  \sum_{j=0}^{\lambda-1}\sum_{i=0}^j  \mu( [r_j,s_{j+1})\cap (\BB+r_i)) +  \sum_{i=0}^\lambda \mu( [r_\lambda,\tilde{T})\cap (\BB+r_i))  \\
&\leq (\lambda+1)\epsilon +  \sum_{j=0}^{\lambda-1}\sum_{i=0}^j  \mu( [r_j,r_{j+1})\cap (\BB+r_i)) +  \sum_{i=0}^\lambda \mu( [r_\lambda,T)\cap (\BB+r_i))  \\
&= (\lambda+1)\epsilon + \sum_{i=0}^\lambda \bigg( \sum_{j=i}^{\lambda-1} \mu( [r_j,r_{j+1})\cap (\BB+r_i))  +   \mu( [r_\lambda,T)\cap (\BB+r_i))  \bigg)  \\
&= (\lambda+1)\epsilon + \sum_{i=0}^\lambda \mu( [r_i,T)\cap (\BB+r_i))  \\
&=  (\lambda+1)\epsilon + \sum_{i=0}^\lambda \mu( [0,T-r_i)\cap \BB)   \leq (\lambda+1)\epsilon +(\lambda+1) \mu( [0,T)\cap \BB).
\end{align*}
Hence, for any $T > 0$ we have $ \mu( [0,T)\cap \BB) \geq T/(\lambda+1) - \epsilon \geq T/(k-1) - \epsilon$ by using $\lambda+1 \leq \ell \leq k-1$. Since $\epsilon>0$ can be made arbitrarily small, we have $\mu( [0,T)\cap \BB) \geq T/(k-1).$
\end{proof}

\subsection{Proof of Corollary \ref{cor2}}
Corollary \ref{cor2} follows from Theorem \ref{maint} using the following general proposition. This is also proved in the work of Bergelson, Furstenberg, and Weiss \cite[Section 1, second paragraph]{BFW} but we give our own different proof of this.
\begin{prop} \label{gen2} Let $\BB \subseteq [0,\infty)$ be any set satisfying the following property:  there exists an integer $k \geq 2$ such that for any real numbers $\beta_1 \leq \beta_2 \leq \cdots \leq \beta_k$  we have
\begin{align*}
\BB \cap \{\beta_j -\beta_i: \, \, 1 \leq i < j \leq k\} \neq \emptyset.
\end{align*}
Then there exists a constant $C\geq 0$ (ineffective) such that for all $T\geq 0$ we have 
\begin{align*}
\BB \cap [T,T+C] \neq \emptyset.
\end{align*}
\end{prop}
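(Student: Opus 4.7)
The plan is to argue by contrapositive: assume $\BB$ is not syndetic and then construct $k$ reals $\beta_1 \leq \cdots \leq \beta_k$ such that every pairwise difference $\beta_j-\beta_i$ lies in the complement $[0,\infty) \setminus \BB$, which directly contradicts the hypothesis. The failure of syndeticity unpacks to the statement that for every $L \geq 0$ there is an $a \geq 0$ with $[a,a+L] \cap \BB = \emptyset$; in other words, the complement of $\BB$ contains arbitrarily long intervals.

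I would build the points $\beta_1 < \beta_2 < \cdots < \beta_k$ inductively, arranging that at step $j{+}1$ all the newly created differences $\beta_{j+1}-\beta_i$ with $i \leq j$ fall inside a single long gap of $\BB$. Concretely, set $\beta_1 := 0$. Given $\beta_1 < \cdots < \beta_j$ with all pairwise differences outside $\BB$, pick a gap $[a,b] \subseteq [0,\infty) \setminus \BB$ with $b-a > \beta_j$ (available by non-syndeticity) and declare $\beta_{j+1} := b$. Then for every $i \leq j$ we have
\begin{align*}
\beta_{j+1} - \beta_i \;=\; b - \beta_i \;\in\; [\,b - \beta_j,\, b\,] \;\subseteq\; [a,b],
\end{align*}
so $\beta_{j+1}-\beta_i \notin \BB$. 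Together with the inductive hypothesis that the differences among $\beta_1,\ldots,\beta_j$ already avoid $\BB$, this completes the step.

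After $k-1$ such extensions we obtain $\beta_1 < \cdots < \beta_k$ with all $\binom{k}{2}$ pairwise differences outside $\BB$, contradicting the $k$-fold difference property. The only technical point is the existence of the desired gap at each step, and this is immediate from the definition of non-syndeticity; this is also exactly what makes the resulting constant $C$ in the conclusion ineffective, since non-syndeticity gives a purely existential guarantee with no quantitative control on how large one must look before encountering a gap of a prescribed length.
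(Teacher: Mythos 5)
Your proof is correct and rests on the same core idea as the paper's: if $\BB$ fails to be syndetic, one can inductively choose $\beta_1 < \cdots < \beta_k$ so that at each step all new pairwise differences land inside a single long $\BB$-free interval, contradicting the $k$-fold difference hypothesis. The paper packages this through an intermediate lemma built around an arbitrary growth function $w(T)\to\infty$ and a step-function construction, whereas your direct contrapositive argument presents essentially the same combinatorial construction in a more streamlined way.
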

 To prove this proposition we first prove the following weaker version:
\begin{lemma} \label{boundlemma} Let $\BB \subseteq [0,\infty)$ satisfy the assumptions of Proposition \ref{gen2}. Let $w$ be any given function such that $w(T) \to \infty$ as $T \to \infty,$ and $w(T) >0$ for $T>0.$  Then there exists a constant $C,$ depending only on the choice of $w,$  such that for all $T > C$ we have
\begin{align*}
\BB \cap [T-w(T),T] \neq \emptyset.
\end{align*}
\end{lemma}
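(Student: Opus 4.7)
The plan is to argue by contradiction. If no such $C$ existed then for every $N$ there would be some $T > N$ with $\BB \cap [T - w(T), T] = \emptyset$, so by iterating one obtains a sequence $T_1 < T_2 < \cdots$ tending to infinity with $\BB \cap [T_n - w(T_n), T_n] = \emptyset$ for every $n$. Since $w(T) \to \infty$, the lengths $w(T_n)$ of these ``forbidden'' intervals also tend to infinity. The idea is to use the existence of arbitrarily long gaps in $\BB$ arbitrarily far out to manufacture a $k$-tuple that violates the hypothesis inherited from Proposition \ref{gen2}.

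Concretely, I would build inductively real numbers $0 = \beta_1 < \beta_2 < \cdots < \beta_k$ such that every pairwise difference $\beta_j - \beta_i$ (with $i < j$) avoids $\BB$. Put $\beta_1 := 0$. Having chosen $\beta_1 < \cdots < \beta_j$, pick an index $n$ large enough that both $T_n > \beta_j$ and $w(T_n) > \beta_j$; such $n$ exists because $T_n \to \infty$ and $w(T_n) \to \infty$. Set $\beta_{j+1} := T_n$. Then for each $i \in \{1,\dots,j\}$ we have
\begin{align*}
T_n - w(T_n) \;<\; T_n - \beta_j \;\leq\; \beta_{j+1} - \beta_i \;=\; T_n - \beta_i \;\leq\; T_n - \beta_1 \;=\; T_n,
\end{align*}
so the $j$ new differences introduced at this step all land in the forbidden interval $[T_n - w(T_n), T_n]$ and hence in the complement of $\BB$. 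The differences fixed at earlier steps are of course unaffected.

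After $k-1$ iterations one has $\beta_1 < \cdots < \beta_k$ with $\{\beta_j - \beta_i : 1 \leq i < j \leq k\} \cap \BB = \emptyset$, directly contradicting the hypothesis placed on $\BB$ in Proposition \ref{gen2}. The proof is essentially structural and I do not anticipate any real obstacle; the only point requiring care is the inductive selection of $\beta_{j+1}$, and this works precisely because both $T_n$ and $w(T_n)$ tend to infinity along the assumed bad sequence. Note that the constant $C$ obtained this way is inherently ineffective, since the proof by contradiction gives no quantitative bound on how large the indices $n$ must be taken at each step.
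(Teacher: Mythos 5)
Your proof is correct and follows essentially the same strategy as the paper's: assume the conclusion fails, use the unboundedness of the bad set together with $w(T)\to\infty$ to select numbers $\beta_1 < \cdots < \beta_k$ so that each new difference falls into a forbidden interval $[A-w(A),A]$ disjoint from $\BB$, and contradict the hypothesis from Proposition \ref{gen2}. The only cosmetic difference is that you build the $\beta_j$ one at a time while the paper states the defining conditions on all the $A_j$ up front; the underlying mechanism is identical.
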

\begin{proof}
Define
\begin{align*}
\A:=\{A > 0: \, \, \BB \cap [A-w(A),A] = \emptyset \}.
\end{align*}
Suppose that the conclusion of the lemma is not true, so that $\A$ is unbounded. Then we can choose $A_1, A_2, \dots, A_{k-1} \in \A$ such that
\begin{align}
& A_1  < A_2 < \cdots < A_{k-1}, \\
& w(A_1)  < w(A_2) < \cdots < w(A_{k-1}) \quad \text{and}  \label{2}\\
& A_j  < w(A_{j+1}) \quad \text{for} \quad j=1,2, \dots, k-1. \label{3}
\end{align}
Define $k$ real numbers by $\beta_0:=0$ and $\beta_j:=A_{j}$ if $j=1,2, \dots k-1.$ Then by (\ref{2}) and (\ref{3}) we have $\beta_i < w(A_j)$ if $1\leq i<j \leq k-1.$ Hence,
\begin{align*}
\{\beta_j-\beta_i: \, \, 0 \leq i < j \leq k-1\} \subseteq \bigcup_{j=1}^{k-1} [A_j-w(A_j),A_j].
\end{align*}
But by assumption we also have
\begin{align*}
\BB \cap \{\beta_j-\beta_i: \, \, 0 \leq i < j \leq k-1\} \neq \emptyset,
\end{align*}
which gives a contradiction.
\end{proof}

\emph{Proof of Proposition \ref{gen2}.}
Suppose that no such constant $C$ exists. This implies that for every $C$ there are arbitrarily large $A$ such that $\BB \cap [A-C,A]  = \emptyset$. Hence, it is possible to find a strictly increasing sequence of positive real numbers $A_n \to \infty$ as $n \to \infty,$ such that
\begin{align*}
\BB \cap [A_n -n, A_n ] = \emptyset
\end{align*}
for all $n \geq 1.$ Fix any such sequence $A_n$ and define a step function $w$ by setting  (with $A_0=0$)
\begin{align*}
w(A) = n \quad \text{for} \quad A \in (A_{n-1},A_n] \quad \text{for any} \, \, n \geq 1. 
\end{align*}
Then $w(A) \to \infty$ as $A \to \infty$, and there are arbitrarily large $A$ such that $\BB \cap [A-w(A),A] = \emptyset,$ namely $A = A_n$ for any $n\geq 1$. This is a contradiction with Lemma \ref{boundlemma}. \qed

\subsection{Outline of the proof of Theorem \ref{maint}}
The proof of Theorem \ref{maint} will occupy us for the remainder of the article; our proof builds heavily on the earlier work of Banks, Freiberg and Maynard \cite{BFM}, and the refinement of Pintz \cite{Pintz} to their argument. We now give an informal outline of the basic ideas and indicate our modifications to them.

A finite set of integers $\HH$ is said to be admissible if for every prime $p$ the set $\HH$ 
avoids at least one residue class modulo $p$, that is, if
\begin{align*}
\bigg| \bigg\{ n \,\, (p):  \,  \prod_{h \in \HH} (n+h) \equiv 0 \quad (p) \bigg\} \bigg| < p.
\end{align*}

Let $N$ be large and suppose we are given an admissible $K$-tuple $\HH = \{h_1,\dots, h_K\}$ with $h_j \leq C \log N$ for all $j$ for some large $C.$ Then by a variant of the Erd\"os-Rankin construction (cf.  \cite[Section 5]{BFM}), one can show that there is an integer $b$ and a smooth modulus $W < N^\epsilon$ such that for any $N < n \leq 2N$ with $n \equiv b \,(W),$ if there are prime numbers in the interval $[n,n+C\log N]$, then they must belong to the set $n+\HH$. 

By using the Maynard-Tao sieve, we can show that there exists $N < n \leq 2N$ with $n \equiv b \,(W)$ such that $n+\HH$ contains prime numbers once $K=|\HH|$ is large enough. Furthermore, suppose that we have a partition $\HH = \HH_1 \cup \HH_2 \cup \cdots \cup \HH_{M}$ into $M$ sets of equal size. Then we can show that there exists a constant $A$ such that for any integer $a \geq 1,$ if $M = \lceil Aa \rceil +1,$ then for at least $a+1$ distinct indices $j$ the set $n+\HH_j$ contains a prime number. That is, the prime numbers that we find by the Maynard-Tao sieve are not too much concentrated on any particular set $n+\HH_j.$ 

The constant $A$ is determined by how well we can control sums over prime pairs; more precisely, it is the best constant so that for all distinct $h,h' \in \HH$ we can show the bound
\begin{align} \label{pairsout}
\sum_{\substack{N < n \leq 2N \\ n \equiv b \, (W)}} 1_{\PP}(n+h)1_{\PP}(n+h')\bigg( \sum_{\substack{d_1, \dots, d_K \\ d_i | n+h_i}} \lambda_{d_1, \dots, d_K} \bigg)^2 \leq (A+o(1)) X,
\end{align}
where $X$ is the expected main term and $\lambda_{d_1, \dots, d_K}$ are sieve weights of Maynard-Tao type supported on $d_1\cdots d_K \leq N^\delta$ for some small $\delta >0$. In \cite[Section 4]{BFM}, Selberg's upper bound sieve is used to show this for $A=4.$ We improve this to $A=3.99$ by using Chen's sieve \cite{Chen}, \cite{Pan} (cf. Proposition \ref{pairs} below). 

The reason why this small improvement is sufficient is as follows: we choose $a=100$ so that $\lceil 3.99a \rceil +1 = 4a,$ and partition our tuple
\begin{align*}
\HH = \HH_1 \cup \HH_2 \cup \HH_3 \cup \HH_4, \quad \quad \quad \quad \HH_i = \bigcup_{j=1}^a \HH_{ij}, \quad i \in \{1,2,3,4\}.
\end{align*}
Then we find $N < n \leq 2N$ with $n \equiv b \,(W)$ such that for at least $a+1$ distinct $(i,j)$ the set $n+\HH_{ij}$ contains a prime number. Thus, by the pigeon-hole principle we must have at least two indices $i \neq i'$ such that both $n+\HH_i, n+\HH_{i'}$ contain primes. By the restriction $n\equiv b \, (W)$ given by the modified Erd\"os-Rankin construction, we then know that there are two consecutive primes, one in $n+\HH_i$ and one in $n+\HH_{i'},$ for some $i \neq i'.$ For $\beta_1 \leq \beta_2 \leq \beta_3 \leq \beta_4$  as in Theorem \ref{maint}, it is then enough to choose $\HH_i$ so that for all $h \in \HH_i$ we have $h = (\beta_i +  o(1)) \log N.$ From this argument we see that the exact numerical value of $A=3.99$ is not important, what matters is that $A$ is strictly less than $4.$ 

To show the bound (\ref{pairsout}) with $A=3.99,$ we require a Bombieri-Vinogradov type equidistribution result for primes, where the moduli run over multiples of $W < N^\epsilon.$ The possibility of exceptional zeros of $L$-functions causes some technical problems, but the result \cite[Theorem 4.2]{BFM} turns out to be sufficient. Since we are using Chen's sieve, we also need to extend this to almost-primes; this is done in Section \ref{bvsec}.  In Section \ref{chensec} we apply Chen's sieve to obtain the required bound (\ref{pairsout}) for prime pairs (Proposition \ref{pairs}). We then state and prove in Section \ref{maysec} the precise version of the Maynard-Tao sieve which we will use (Proposition \ref{strong}), and in Section \ref{mainsec} we prove our main result Theorem \ref{maint}.

\begin{remark}
By the same argument, if we could show the bound (\ref{pairsout}) with any constant $A <3$ in place of $3.99,$ we would obtain Theorem \ref{maint} with sequence of four real numbers replaced by three. This in turn would give that $\mu(\LL \cap [0,T]) \geq T/2.$ Similarly, if we had (\ref{pairsout}) with any constant $A < 2$ in place of $3.99,$ we could show that $\LL =[0,\infty],$ which is the conjecture of Erd\"os. However, by the parity principle this should be just as hard as obtaining a lower bound for such a sum over prime pairs, which would immediately imply $\LL =[0,\infty]$ (cf. \cite[Chapter 16]{FI} for a quantitative version due to Bombieri of the parity principle).
\end{remark}

\subsection{Notations}
We use the following asymptotic notations: for positive functions $f,g,$ we write $f \ll g$ or $f= \mathcal{O}(g)$ if there is a constant $C$ such that $f  \leq C g.$ $f \asymp g$ means $g \ll f \ll g.$ The constant may depend on some parameter, which is indicated in the subscript (e.g. $\ll_{\epsilon}$).
We write $f=o(g)$ if $f/g \to 0$ for large values of the variable.

In general, $C$ stands for some large constant, which may not be the same from place to place. For variables we write $n \sim N$ meaning $N<n \leq eN$ (an $e$-adic interval), and $n \asymp N$ meaning $N/C < n < CN$ (a $C^2$-adic interval) for some constant $C>1$ which is large enough depending on the situation. If not otherwise stated the symbols $p,q,r$ denote primes and $d,k,\ell,m,n$ denote integers.

For a statement $E$ we denote by $1_E$ the characteristic function of that statement. For a set $A$ we use $1_A$ to denote the characteristic function of $A,$ so that $1_\PP$ will denote the characteristic function of primes.

We define $P(w):= \prod_{p\leq w} p,$ and for any integer $d$ we write $P^-(d):= \min \{p: \, p | d\},$ $P^+(d):= \max \{p: \, p | d\}.$ The $k$-fold divisor function is denoted by $\tau_k(d).$ We denote  the ceiling function by $\lceil \cdot \rceil$, that is, $\lceil x \rceil $ is the smallest integer $n \geq x.$

Overall we use similar notations as in \cite{BFM}, especially when we use the Maynard-Tao sieve; these are recalled in the text as needed.

\subsection*{Acknowledgements}  I am grateful to my supervisor Kaisa Matom\"aki for support and comments. I also express my gratitude to Emmanuel Kowalski for helpful comments as well as for hospitality during my visit to ETH Z\"urich. I wish to thank James Maynard for bringing the article \cite{BFM} to my attention. I also wish to thank Pavel Zorin-Kranich for useful suggestions and the anonymous referee for comments. During the work the author was supported by a grant from the Magnus Ehrnrooth Foundation.

\section{Modified Bombieri-Vinogradov Theorem} \label{bvsec}
As was outlined above, we need to show an upper bound of type (\ref{pairsout}) for prime pairs, where the modulus $W$ can be as large as $N^\epsilon.$ For this purpose we require a modified version of the Bombieri-Vinogradov Theorem. Before stating this we need the following lemma on exceptional zeros of Dirichlet $L$-functions (this is \cite[Lemma 4.1]{BFM}):
\begin{lemma}Let $T \geq 3$ and $P \geq T^{1/\log_2 T}.$ For a sufficiently small constant $c > 0,$ there is at most one modulus $q \leq T$ with $P^+(q) \leq P$ and one  primitive character $\chi$ modulo $q$ such that the function $L(s,\chi)$ has a zero in the region
\begin{align*}
\Re(s) \geq 1-\frac{c}{\log P}, \quad \quad |\Im(s)| \leq \exp \bigg( \log P / \sqrt{\log T}\bigg). 
\end{align*}
If such a character $\chi$ mod $q$ exists, then it is real, $L(s,\chi)$ has at most one zero in the above region, which is then real and simple, and
\begin{align*}
P^+(q) \gg \log q \gg \log_2 T.
\end{align*}
\end{lemma}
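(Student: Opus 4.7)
The plan is to proceed via a refinement of the classical Page--Landau theorem, adapted so that the smoothness hypothesis $P^+(q) \leq P$ is used to sharpen the quantitative parameters from $\log T$ to $\log P$ in the width of the zero-free region.

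First, for the uniqueness of the exceptional character and its reality, I would argue by contradiction: suppose two distinct primitive characters $\chi_1 \pmod{q_1}$ and $\chi_2 \pmod{q_2}$ with $q_i \leq T$ and $P^+(q_i) \leq P$ each have a zero in the specified rectangle. The standard device is to examine the product
\[
F(s) = \zeta(s)\, L(s,\chi_1)\, L(s,\chi_2)\, L(s,\chi_1 \bar{\chi_2}),
\]
whose logarithmic derivative has non-negative Dirichlet coefficients, coming ultimately from the pointwise inequality $(1+\Re \chi_1(n))(1+\Re \chi_2(n)) \geq 0$. Integrating $-F'/F$ against a suitable positive test function, or equivalently working from the Hadamard product expansion, one concludes that the hypothesised zeros cannot both lie too close to $\Re s = 1$ at the same time, yielding a contradiction. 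The crucial point is that the conductor of $\chi_1 \bar{\chi_2}$ divides $\operatorname{lcm}(q_1, q_2)$ and therefore still has largest prime factor at most $P$; inputting a log-free zero-density estimate of Linnik--Heath-Brown type that is sensitive to the prime factorisation of the modulus (rather than merely to its size) then replaces the usual $\log T$ by $\log P$ in the resulting bound. Once uniqueness is established, reality of $\chi$ is automatic: a complex $\chi$ would pair with $\bar\chi$ to produce two distinct exceptional characters.

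Second, simplicity and reality of the zero $\beta$ follow from the same non-negativity argument: a double real zero, or a complex-conjugate pair of zeros close to $s=1$ of a real $\chi$, would over-contribute in the explicit formula and again contradict the positivity of the coefficients of $F$. For the final lower bound $P^+(q) \gg \log q \gg \log_2 T$, I would argue that if $P^+(q)$ were small, the $P^+(q)$-smooth truncation of $\sum_n \chi(n)/n$, combined with elementary sieve bounds (or direct manipulation of the Euler product over small primes), would supply an effective lower bound on $L(1,\chi)$; such a lower bound is incompatible with $L(s,\chi)$ possessing a real zero as close to $1$ as allowed by the specified region, thereby forcing $P^+(q)$ to grow at least like $\log q$, and in turn like $\log_2 T$ in view of the region's width $c/\log P$ and the hypothesis $P \geq T^{1/\log_2 T}$.

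The main obstacle will be executing the first step cleanly so that the quantitative saving is genuinely from $\log T$ to $\log P$ rather than merely from $\log T$ to $\log(q_1 q_2)$. Tracking uniformity in both $T$ and $P$ through the log-free density estimate, and exploiting the wider vertical range $|\Im s| \leq \exp(\log P / \sqrt{\log T})$ together with the smoothness of the modulus, is the technical heart of the proof; this is also precisely the place where the assumption $P \geq T^{1/\log_2 T}$ is used to guarantee that $\log P$ is at least a fixed positive power of $\log T$ so that the error terms in the zero-density estimate can be absorbed.
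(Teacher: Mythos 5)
The paper does not prove this lemma: it is stated verbatim as \cite[Lemma 4.1]{BFM} and simply imported, so there is no in-paper proof to compare against. What can be assessed is whether your sketch would plausibly reproduce the cited result.

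Your overall architecture --- Landau's four-$L$-function positivity trick $\zeta\, L(\chi_1)\, L(\chi_2)\, L(\chi_1\bar{\chi}_2)$ for uniqueness, the pairing $\chi \leftrightarrow \bar\chi$ for reality, and a Hadamard/explicit-formula estimate for simplicity --- is indeed the classical skeleton of Landau--Page, and the observation that the conductor of $\chi_1\bar{\chi}_2$ stays $P$-smooth is the right thing to notice. However, there is a genuine gap at the technical heart, precisely where you flag ``the main obstacle.'' You propose to import a log-free zero-density estimate of Linnik--Heath-Brown type ``sensitive to the prime factorisation of the modulus.'' No such off-the-shelf density estimate exists in the form you need, and more to the point this is not the mechanism by which the width $c/\log T$ gets upgraded to $c/\log P$. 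In the Landau--Page framework the $\log T$ arises from the term $\tfrac12\log q + O(\log(|t|+2))$ in the Hadamard-product expansion of $-L'/L$, and the improvement for smooth moduli has to be extracted at that step: one must show that, for a $P$-smooth modulus $q$ and $s$ in the region in question, the relevant real parts of $-L'/L$ can be bounded with $\log P$ in place of $\log q$, which uses the Euler-product structure over the small primes dividing $q$ (compare the treatment of $-L'/L(\sigma,\chi_0) = -\zeta'/\zeta(\sigma) - \sum_{p\mid q}\log p/(p^\sigma-1) \ll 1/(\sigma-1)+\log P$). Appealing to a zero-density theorem here is both unnecessary and does not deliver the claimed saving. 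The same objection applies to the vertical range $|\Im s| \leq \exp(\log P/\sqrt{\log T})$: this must be threaded through the Borel--Carath\'eodory / Hadamard step, not through a density bound.

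Your sketch for $P^+(q) \gg \log q$ --- an effective lower bound for $L(1,\chi)$ coming from the smoothness of $q$, contradicting a real zero that close to $1$ --- is in the right spirit but is asserted rather than executed; the weak inequality $\log q \gg \log_2 T$ at the end is just $q \leq T$ together with the Landau bound $\log q \gg \tfrac{\log q}{\log_2 T}$, but the step $P^+(q) \gg \log q$ itself requires an actual argument. As it stands the proposal identifies the correct classical ingredients but replaces the decisive quantitative step with a reference to the wrong kind of tool; I would not accept it as a proof without a concrete treatment of the $\log P$ versus $\log q$ bound in the explicit formula.
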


Fix a constant $c>0$ for which the above lemma holds. Similarly as in \cite{BFM}, if such an exceptional modulus $q \leq T$ exists with $P = T^{1/\log_2 T}$, we define
\begin{align} \label{Z}
Z_T = P^+(q),
\end{align}
and we set $Z_T =1$ if no such modulus exists. We then have the following variant of the Bombieri-Vinogradov Theorem (this is \cite[Theorem 4.2]{BFM}): 
\begin{prop} \label{bv} \emph{\textbf{(Modified Bombieri-Vinogradov).}} Let $N > 2$ and fix constants $C > 0,$ $\epsilon > 0,$ and $\delta > 0$. Let $q_0 < N^{\epsilon}$ be a square-free integer with $P^+(q_0) < N^{\epsilon/ \log_2 N}.$ Then for $\epsilon$ small enough we have
\begin{align*}
\sum_{\substack{q \leq N^{1/2-\delta} \\ q_0 | q \\ (q, Z_{N^{2\epsilon}})=1}} \max_{(a,q) = 1} \bigg | \sum_{\substack{ n \leq N \\ n \equiv a \, (q)}} \Lambda (n) \, - \frac{1}{\phi(q)}  \sum_{\substack{ n \leq N}} \Lambda (n)\bigg | \, \ll_{\delta, C} \, \frac{N}{\phi(q_0) \log^C N}.
\end{align*}
\end{prop}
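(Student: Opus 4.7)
The plan is to mirror the standard proof of the Bombieri--Vinogradov theorem (in the Vaughan--Heath-Brown formulation), tracking the effect of the congruence condition $q_{0}\mid q$ and using the coprimality $(q,Z_{N^{2\epsilon}})=1$ to suppress any Siegel-zero contribution. Writing $q=q_{0}r$ and applying character orthogonality, we have
\[
\max_{(a,q)=1}\bigl|\psi(N;q,a)-\psi(N)/\phi(q)\bigr| \;\le\; \frac{1}{\phi(q)}\sum_{\chi\ne\chi_{0}}\bigl|\psi(N,\chi)\bigr|.
\]
Inducing each $\chi$ from its primitive character $\chi^{*}\bmod q^{*}\mid q$ (at a cost of $O(\log^{2}N)$ per character), the task reduces to bounding
\[
\sum_{\substack{q\le Q \\ q_{0}\mid q,\,(q,Z)=1}}\frac{1}{\phi(q)}\sum_{q^{*}\mid q}\;\sideset{}{^*}{\sum}_{\chi^{*}\bmod q^{*}}\bigl|\psi(N,\chi^{*})\bigr|,
\]
with $Q:=N^{1/2-\delta}$. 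I would split the inner $q^{*}$-sum at the threshold $(\log N)^{B}$, with $B=B(C)$ large, and handle the two ranges separately.

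For the small range $q^{*}\le(\log N)^{B}$, I invoke the Siegel--Walfisz theorem, which yields $|\psi(N,\chi^{*})|\ll N\exp(-c\sqrt{\log N})$ for every non-principal primitive character \emph{except} possibly the exceptional one. The condition $(q,Z_{N^{2\epsilon}})=1$ is precisely designed to exclude this exception: if a primitive exceptional character modulo $\tilde q\le N^{2\epsilon}$ exists, then by the preceding lemma $Z_{N^{2\epsilon}}=P^{+}(\tilde q)\mid\tilde q$, so $\tilde q\mid q$ would force $Z_{N^{2\epsilon}}\mid q$, contradicting our coprimality hypothesis. The smoothness requirement $P^{+}(q_{0})<N^{\epsilon/\log_{2}N}$ guarantees by the same lemma that $q_{0}$ itself is coprime to $Z_{N^{2\epsilon}}$, so that the coprimality condition is compatible with $q_{0}\mid q$. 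Combined with a routine divisor estimate, this renders the small-conductor contribution $\ll N/(\phi(q_{0})\log^{2C}N)$.

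For the larger range $(\log N)^{B}<q^{*}\le Q$, I decompose $\Lambda$ via the Heath-Brown identity into Type I and Type II bilinear sums and apply the hybrid large-sieve inequality
\[
\sum_{q^{*}\le Q}\frac{q^{*}}{\phi(q^{*})}\sideset{}{^*}{\sum}_{\chi^{*}\bmod q^{*}}\Bigl|\sum_{M<n\le 2M}a_{n}\chi^{*}(n)\Bigr|^{2} \;\ll\; (Q^{2}+M)\|a\|_{2}^{2}.
\]
The outer count of $q\le Q$ with $q_{0}\mid q$ and $q^{*}\mid q$ is at most $Q/[q_{0},q^{*}]$, and weighting by $\phi(q)^{-1}$ introduces a factor $\ll \phi(q_{0})^{-1}\phi(q^{*})^{-1}\log N$; this is precisely where the $\phi(q_{0})^{-1}$ saving materializes cleanly. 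The remainder of the argument then follows the classical proof without essential change, and the choice $Q=N^{1/2-\delta}$ ensures the Type I/II contributions add up to $\ll_{\delta,C} N\phi(q_{0})^{-1}\log^{-C}N$.

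The principal technical obstacle, I expect, is carefully propagating the $\phi(q_{0})^{-1}$ saving \emph{uniformly} in $q_{0}$ through every reduction step---passage to primitive characters, the counting of $q$ divisible by both $q_{0}$ and $q^{*}$, and the distribution of mass across Type I/II decompositions---since a crude bookkeeping would lose factors like $(q_{0}/\phi(q_{0}))$ at multiple stages. A secondary concern is verifying that the parameters of the lemma on exceptional zeros (with $T=N^{2\epsilon}$ and $P=T^{1/\log_{2}T}$) are compatible with the range of conductors to which Siegel--Walfisz is applied, so that no exceptional character arising from any conductor $q^{*}$ that could divide a permissible $q$ is left in the sum.
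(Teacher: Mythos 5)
This proposition is not proved in the paper at all: it is quoted verbatim as \cite[Theorem 4.2]{BFM}, and the paper only later extracts Lemma~\ref{char} as a byproduct of the proof given there. So there is no in-paper argument to compare your proposal against. That said, your sketch is consistent with how Banks, Freiberg and Maynard actually prove their Theorem~4.2: pass to primitive characters, use Siegel--Walfisz for small conductors with the condition $(q,Z_{N^{2\epsilon}})=1$ eliminating the one possible exceptional character, use a combinatorial decomposition of $\Lambda$ plus the large sieve for large conductors, and carry the $\phi(q_0)^{-1}$ through the count of moduli $q$ divisible by both $q_0$ and the primitive conductor. You have correctly identified the two points requiring care (uniformity of the $\phi(q_0)^{-1}$ saving and matching the exceptional-zero lemma's parameters), so as a proof sketch this is sound.

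One small inaccuracy worth flagging: you assert that $P^{+}(q_0)<N^{\epsilon/\log_2 N}$ ``guarantees by the same lemma that $q_0$ itself is coprime to $Z_{N^{2\epsilon}}$.'' That does not follow: the lemma gives only the lower bound $Z_{N^{2\epsilon}}\gg\log_2 N$, not an upper bound that would place $Z_{N^{2\epsilon}}$ below $P^{+}(q_0)$. This compatibility is not actually needed, though---if $Z_{N^{2\epsilon}}\mid q_0$, then the constraints $q_0\mid q$ and $(q,Z_{N^{2\epsilon}})=1$ are mutually exclusive, the sum is empty, and the stated bound holds vacuously.
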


From the proof of \cite[Theorem 4.2]{BFM} we obtain the following lemma, which we require for the proof of Proposition \ref{bv2} below:

\begin{lemma} \label{char} With the same notations and assumptions as in Proposition \ref{bv} we have
\begin{align} 
\sup_{\substack{A,B \\ AB \leq N^{1/2-\delta} \\ A \leq q_0}} \sum_{\substack{A < a \leq 2A \\ a | q_0}} \sum_{\substack{B \leq b \leq 2B \\ (b, q_0 Z_{N^{2\epsilon}})=1}} \frac{1}{\phi (b)} \sideset{}{'}\sum_{\chi \, \, (ab)} \bigg | \sum_{n \leq N} \Lambda(n) \chi(n) \bigg | \, \ll_C \frac{N}{\log^C N},
\end{align}
where $\Sigma '$ denotes the sum over primitive characters modulo $ab.$
\end{lemma}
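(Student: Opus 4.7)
The plan is to extract Lemma \ref{char} from the proof of Proposition \ref{bv} (i.e., [BFM, Theorem 4.2]); more precisely, to present it as the key dyadic bound that appears after reducing to primitive characters but before summing over the ``free'' multiples $m$ in the factorization of the modulus $q$. The starting point is the standard character-orthogonality identity
\begin{align*}
\sum_{\substack{n \leq N \\ n \equiv a \, (q)}} \Lambda(n) - \frac{1}{\phi(q)} \sum_{n \leq N} \Lambda(n) = \frac{1}{\phi(q)} \sum_{\chi \neq \chi_0 \, (q)} \overline{\chi(a)} \, \psi(N,\chi) + O(\log N),
\end{align*}
followed by the usual replacement of non-principal $\chi \, (q)$ with the unique primitive character $\chi^*$ modulo some $q^* \mid q$, $q^*>1$, at cost $O((\log qN)^2)$.

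Next I would exploit the arithmetic structure forced by the conditions $q_0 \mid q$ and (eventually) $(q,Z_{N^{2\epsilon}})=1$. Since $q_0$ is square-free and one may assume the moduli appearing are square-free as well, any $q^* \mid q$ factorises uniquely as $q^* = ab$ with $a = (q^*,q_0) \mid q_0$ and $(b,q_0)=1$. Consequently $\mathrm{lcm}(q^*,q_0) = q_0 b$ and, writing $q = q_0 b m$ with $m \leq N^{1/2-\delta}/(q_0 b)$,
\begin{align*}
\sum_{\substack{q \leq N^{1/2-\delta} \\ q_0 \mid q, \, q^* \mid q}} \frac{1}{\phi(q)} \leq \frac{1}{\phi(q_0)\phi(b)} \sum_{m \leq N^{1/2-\delta}/(q_0 b)} \frac{1}{\phi(m)} \ll \frac{\log N}{\phi(q_0)\phi(b)}.
\end{align*}
Dyadically decomposing in $a \sim A$ and $b \sim B$ (so that $AB \leq N^{1/2-\delta}$ and $A \leq q_0$), one sees that the bound of Proposition \ref{bv} reduces, up to $O(\log^2 N)$ dyadic ranges and the explicit factor $\log N / \phi(q_0)$, precisely to the inequality claimed in Lemma \ref{char}, with the constant $C$ inflated by an absolute amount.

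It therefore suffices to verify the dyadic bound itself. Here I would follow the standard Vaughan/Heath-Brown decomposition of $\Lambda$ into Type I and Type II bilinear pieces, twist each piece by the primitive characters $\chi$ modulo $ab$, and apply the multiplicative large sieve for primitive characters in the usual way, using Siegel--Walfisz for the small-conductor remainder. The weight $1/\phi(b)$ is harmless since $1/\phi(b) \ll \log\log(3b)/b$, and the restriction $A \leq q_0 < N^{\epsilon}$ together with $AB \leq N^{1/2-\delta}$ keeps the large-sieve error under the required threshold $N/\log^C N$ for arbitrary $C$.

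The main obstacle, as in the proof of Proposition \ref{bv}, is the potential Siegel zero: the Siegel--Walfisz input fails if the modulus is divisible by an exceptional conductor, which would destroy the power-of-log saving. This is exactly why the condition $(b,Z_{N^{2\epsilon}})=1$ is imposed; it guarantees that the primitive character whose $L$-function might possess a zero in the region described before \eqref{Z} never appears among the characters summed over in Lemma \ref{char}, so the standard zero-free-region estimates apply uniformly. Once this is arranged, the remainder of the argument is a routine rearrangement of the estimates that already occur in the BFM proof, which is why the lemma can legitimately be asserted to ``follow from the proof of [BFM, Theorem 4.2]''.
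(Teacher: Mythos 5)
The paper offers no self-contained proof here: it simply records Lemma \ref{char} as an intermediate output of the proof of \cite[Theorem~4.2]{BFM}, and your reconstruction tracks that argument faithfully -- orthogonality, passage to primitive characters, the factorization $q^{*}=ab$ with $a=(q^{*},q_0)\mid q_0$ and $(b,q_0)=1$ (after discarding the negligible non-squarefree conductors), extraction of $\log N/\phi(q_0)$ by summing $1/\phi$ over the cofactor in $q=q_0bm$, a dyadic cut in $a$ and $b$, and then Vaughan's identity together with the multiplicative large sieve for primitive characters and Siegel--Walfisz on the small-conductor range. One point worth stating explicitly: the condition $(b,Z_{N^{2\epsilon}})=1$ alone does not exclude the exceptional conductor from the moduli $ab$; you also need $(a,Z_{N^{2\epsilon}})=1$, which is guaranteed because $a\mid q_0$ and, in any non-vacuous instance of Proposition~\ref{bv}, the constraints $q_0\mid q$ and $(q,Z_{N^{2\epsilon}})=1$ force $(q_0,Z_{N^{2\epsilon}})=1$. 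With that caveat noted, your outline is the argument the paper is implicitly invoking.
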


Since we plan to apply Chen's sieve, we also require a similar equidistribution result for almost-primes. To prove such a result we require the large sieve for multiplicative characters, which follows from Theorem 9.10 of \cite{FI}:
\begin{lemma} \label{large}\emph{\textbf{(Large sieve for multiplicative characters).}} For any sequence $c_n$ of complex numbers and for any $M,N \geq 1$ we have
\begin{align*}
\sum_{q \leq Q} \frac{q}{\phi(q)} \sideset{}{'} \sum_{\chi \, \, (q)} \bigg | \sum_{M< n \leq M+N} c_n \chi(n)\bigg |^2 \, \leq \, (Q^2+N) \sum_n |c_n|^2.
\end{align*}
\end{lemma}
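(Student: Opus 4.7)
The plan is to deduce the claim from the classical additive large sieve inequality via Gauss sums; this is the standard route and I would not reprove the additive input.

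First I would invoke the additive large sieve: the Farey fractions $\{a/q : q \le Q,\ 1 \le a \le q,\ (a,q)=1\}$ are $Q^{-2}$-well-spaced in $\R/\Z$ (if $a/q\neq a'/q'$ with $q,q'\le Q$, then $|a/q-a'/q'|\ge 1/(qq')\ge 1/Q^{2}$), so the sharp form of the large sieve (\cite[Theorem 9.10]{FI} applied to this point set) gives
\begin{align*}
\sum_{q \le Q} \sum_{\substack{a \pmod q \\ (a,q)=1}} |S(a/q)|^2 \;\le\; (Q^2 + N)\sum_n |c_n|^2,
\end{align*}
where $S(x):=\sum_{M<n\le M+N} c_n\, e(nx)$.

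Next I would convert primitive multiplicative characters into additive ones via the Gauss sum identity: for any primitive $\chi$ mod $q$ and any integer $n$,
\begin{align*}
\tau(\bar\chi)\,\chi(n) \;=\; \sum_{a \pmod q} \bar\chi(a)\, e(an/q), \qquad |\tau(\bar\chi)|^2 = q.
\end{align*}
Substituting into $\sum_n c_n \chi(n)$ yields
\begin{align*}
\Bigl|\sum_n c_n\,\chi(n)\Bigr|^2 \;=\; \frac{1}{q}\,\Bigl|\sum_{(a,q)=1} \bar\chi(a)\, S(a/q)\Bigr|^2.
\end{align*}
I would then sum over primitive $\chi$ mod $q$, enlarge to a sum over all characters mod $q$ (nonnegative terms), and apply Parseval's identity on the finite group $(\Z/q\Z)^{*}$, which produces
\begin{align*}
\sideset{}{'}\sum_{\chi \pmod q}\Bigl|\sum_n c_n\,\chi(n)\Bigr|^2 \;\le\; \frac{\phi(q)}{q}\sum_{(a,q)=1}|S(a/q)|^2.
\end{align*}
Multiplying by $q/\phi(q)$, summing over $q\le Q$, and inserting the additive bound from the first step finishes the proof.

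The only ingredient with any genuine analytic content is the additive large sieve with the optimal constant $Q^{2}+N$; the passage from additive to multiplicative characters is a purely algebraic Gauss sum/orthogonality manipulation, and the factor $q/\phi(q)$ in the statement is precisely the loss one incurs from restricting to primitive characters. I therefore expect no serious obstacle beyond bookkeeping.
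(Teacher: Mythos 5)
Your proof is correct and is exactly the standard derivation the paper has in mind when it says the lemma ``follows from Theorem 9.10 of \cite{FI}'' without giving details: the additive large sieve over the $Q^{-2}$-well-spaced Farey fractions, followed by the Gauss-sum conversion $\tau(\bar\chi)\chi(n)=\sum_{a}\bar\chi(a)e(an/q)$ for primitive $\chi$, enlargement from primitive to all characters, and Parseval on $(\Z/q\Z)^{*}$. The bookkeeping (in particular the factor $\phi(q)/q$ emerging from Parseval, which cancels the $q/\phi(q)$ in the statement) is accurate, so there is nothing to add.
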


To state the equidistribution result for almost-primes, we need to set up some notation: fix $0 < \alpha < 1/2,$ and let $N^\alpha \ll A_1 \ll N^{1-\alpha}$, for sufficiently large $N.$ Define
\begin{align} \label{p0}
\Lambda_0(n) := (f \ast g)(n),
\end{align}
where $f(m) = 1_{\PP}(m)(\log m)1_{m \leq A_1},$ and $g$ is any function such that $|g(n)| \, \ll 1,$ and $g(n) \neq 0$ only if $P^-(n) \geq N^{\alpha}$ and $n \asymp N/A_1$. Note that then $\Lambda_0(n)$ is supported on almost-primes $n \ll N$.

We then have that Proposition \ref{bv} holds also with $\Lambda(n)$ replaced by $\Lambda_0 (n)$: 
\begin{prop} \label{bv2}\emph{\textbf{(Modified Bombieri-Vinogradov for almost-primes).}}  Let $N > 2$ and fix constants $C > 0,$ $\epsilon > 0,$ and $\delta > 0$.   Let $q_0 < N^{\epsilon}$ be a square-free integer with $P^+(q_0) < N^{\epsilon/ \log_2 N}.$ Let $\Lambda_0(n)$ be as in (\ref{p0}). Then for all small enough $\epsilon$ we have
\begin{align*}
\sum_{\substack{q \leq N^{1/2-\delta} \\ q_0 | q \\ (q, Z_{N^{2\epsilon}})=1}} \max_{(a,q) = 1} \bigg | \sum_{\substack{  n \equiv a \, (q)}} \Lambda_0 (n) \, - \frac{1}{\phi(q)}  \sum_{\substack{ n }} \Lambda_0 (n)\bigg| \, \ll_{\delta, C} \, \frac{N}{\phi(q_0) \log^C N}.
\end{align*}
\end{prop}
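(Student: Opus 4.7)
The plan is to follow the proof of Proposition \ref{bv} in \cite{BFM} closely, with the significant simplification that $\Lambda_0 = f*g$ is \emph{already} expressed as a convolution. Consequently no Vaughan-type identity is needed to set up a bilinear structure, and the proof reduces to a direct large-sieve argument combined with the exceptional-zero analysis that already lies behind Lemma \ref{char}.

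First, I would expand the residue-class indicator via Dirichlet characters modulo $q$: for $q_0 \mid q$, $(q, Z_{N^{2\epsilon}})=1$, and $(a,q)=1$, orthogonality gives
\begin{align*}
\sum_{n \equiv a \, (q)} \Lambda_0(n) \, - \, \frac{1}{\phi(q)} \sum_n \Lambda_0(n) \; = \; \frac{1}{\phi(q)} \sum_{\chi \neq \chi_0 \, (q)} \bar\chi(a) \sum_n \Lambda_0(n)\chi(n) \; + \; O(E_q),
\end{align*}
where $E_q$ collects the discrepancy between $\sum_n \Lambda_0(n)$ and $\sum_{(n,q)=1} \Lambda_0(n)$. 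Summed over admissible $q$, the $E_q$ contribution is negligible because the prime factors of $n$ in the support of $\Lambda_0$ are either $\leq A_1$ or $\geq N^\alpha$, ruling out moderate common factors with $q \leq N^{1/2-\delta}$. Next, I reduce each $\chi \bmod q$ to its inducing primitive $\chi^*$ of conductor $d \mid q$. Writing $q = q_0 b$ with $(b, Z_{N^{2\epsilon}})=1$, and $d = a_1 d_2$ with $a_1 \mid q_0$ and $d_2 \mid b$, a dyadic decomposition in the sizes of $a_1$ and $d_2$ brings the task into exactly the shape of Lemma \ref{char} but with $\Lambda_0$ replacing $\Lambda$.

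The new step exploits the convolution structure: by complete multiplicativity of $\chi$,
\begin{align*}
\sum_n \Lambda_0(n)\chi(n) \; = \; M(\chi) K(\chi), \qquad M(\chi) := \sum_m f(m)\chi(m), \qquad K(\chi) := \sum_k g(k)\chi(k).
\end{align*}
Applying Cauchy--Schwarz to the sum over $\chi$ splits the bound into an $M$-factor and a $K$-factor, each of which is bounded by the large-sieve inequality (Lemma \ref{large}): $M$ has support of length $A_1$ with $\|f\|_2^2 \ll A_1 \log N$, and $K$ has support of length $\asymp N/A_1$ with $\|g\|_2^2 \ll N/A_1$. Since the conductor range satisfies $a_1 d_2 \leq N^{1/2-\delta}$ and since $N^\alpha \ll A_1 \ll N^{1-\alpha}$, both support lengths comfortably dominate the square of the modulus bound, so the large sieve produces a power-of-$N$ saving over the trivial bound, far more than the $\log^C N$ factor required. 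The $q_0$-factor of $1/\phi(q_0)$ then falls out of the standard identity $\sum_{a_1 \mid q_0} 1/\phi(a_1) \ll q_0/\phi(q_0)$ combined with the restriction $a_1 \mid q_0$ in the outer sum.

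The main obstacle is the usual exceptional-zero issue: a Siegel zero of a real primitive character could, in principle, inflate the corresponding $L$-function factor and weaken the large-sieve output for one modulus. This is handled exactly as in \cite[Section 4]{BFM}: the hypothesis $(q, Z_{N^{2\epsilon}}) = 1$ excludes the unique potentially exceptional modulus singled out by the preliminary lemma on exceptional zeros, after which Siegel's theorem (ineffectively) supplies the required quasi-Riemann-hypothesis region for all remaining characters, and the bookkeeping device used in the proof of Lemma \ref{char} to convert that zero-free information into a saving inside the Bombieri--Vinogradov sum carries over verbatim to $\Lambda_0$. Since the almost-prime weight already factors without recourse to a combinatorial identity, the proof of Proposition \ref{bv2} is in fact strictly easier than that of Proposition \ref{bv}, and the only nontrivial input is precisely the exceptional-character analysis imported from there.
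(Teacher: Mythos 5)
There is a genuine gap at the heart of your argument: you claim that after Cauchy--Schwarz both the $M$-factor and the $K$-factor can be estimated by the large sieve (Lemma~\ref{large}) and that "both support lengths comfortably dominate the square of the modulus bound, so the large sieve produces a power-of-$N$ saving." This is false in the small-conductor range. After the reduction to primitive characters, the conductor $q'$ may be bounded (even $q'=2$ occurs). For $q' \asymp Q$ with $Q$ bounded, the large sieve gives $\|f\|_2\|g\|_2 \asymp (A_1 \cdot N/A_1)^{1/2} = N^{1/2}$ for the product of the two square roots, and after multiplying by $N^{1/2}$ (from $\|f\|_2\|g\|_2$ versus $\|f\|_1\|g\|_1$ normalization) and dividing by $Q$ one obtains $\gg N$, not $N/\log^C N$. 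As in the classical Bombieri--Vinogradov theorem, the large sieve alone cannot treat small moduli; a Siegel--Walfisz-type input is indispensable there, and your proposal does not actually supply it.

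You do gesture toward this by invoking Lemma~\ref{char} and claiming the "bookkeeping device... carries over verbatim to $\Lambda_0$," but that lemma concerns $\sum_n \Lambda(n)\chi(n)$, not $\sum_n \Lambda_0(n)\chi(n)$, and there is no reason the underlying $L$-function argument (explicit formula / zero-free region) applies to the general bounded coefficients $g$. The paper's actual mechanism in the small-modulus regime ($B < N^\epsilon$) is different and more delicate: after Cauchy--Schwarz one factor is $\sum_{m\le A_1}\Lambda(m)\chi(m)$ (having first replaced $f$ by $\Lambda(\cdot)1_{\cdot\le A_1}$ with a negligible error from prime powers), and Lemma~\ref{char} is applied to \emph{that} factor, with $A_1$ playing the role of $N$; the $\log$-power saving comes entirely from there. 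The other factor, involving $g$, is bounded by the large sieve with no log-saving required, just a bound that is not too large. Your proposal assigns the saving to the wrong source and hence does not close. In addition, the reduction to primitive characters requires controlling the terms with $(n, q/q')>1$; the paper handles these using the special supports of $f$ (primes) and $g$ (rough numbers), whereas your "$E_q$" term is attributed to a different discrepancy and that part of the argument is left unexamined.
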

\begin{proof} The basic idea is to use the large sieve inequality for large moduli and for small moduli use Lemma \ref{char}. For convenience we set $D:=N^{1/2-\delta}.$ Using the expansion
\begin{align*}
\sum_{\substack{  n \equiv a \, (q)}} \Lambda_0 (n) \, - \frac{1}{\phi(q)}  \sum_{\substack{ n }} \Lambda_0 (n) = \frac{1}{\phi(q)}\sum_{\substack{\chi \, \, (q) \\ \chi \neq \chi_0}} \overline{\chi}(a) \sum_{n } \Lambda_0 (n) \chi (n),
\end{align*}
we are reduced to obtaining the bound
\begin{align} \label{ave}
\sum_{\substack{q \leq D \\ q_0 | q \\ (q, Z_{N^{2\epsilon}})=1}}  \frac{1}{\phi(q)} \sum_{\substack{\chi \, \, (q) \\ \chi \neq \chi_0}} \bigg |  \sum_{n } \Lambda_0 (n) \chi (n) \bigg| \, \ll_{\delta, C} \, \frac{N}{\phi(q_0) \log^C N}.
\end{align}
We then replace the character $\chi$ modulo $q$ by the primitive character $\chi'$ modulo $q'$ which induces $\chi$; we have
\begin{align*}
\chi(n) = \chi'(n) - \chi'(n)1_{(n,q/q') > 1}.
\end{align*}
Hence, the left-hand side of (\ref{ave}) is bounded by
\begin{align} \label{ave2}
\sum_{\substack{q \leq D \\ q_0 | q \\ (q, Z_{N^{2\epsilon}})=1}}  \frac{1}{\phi(q)} \sum_{\substack{\chi \, \, (q) \\ \chi \neq \chi_0}} \bigg |  \sum_{n } \Lambda_0 (n) \chi' (n) \bigg| + \sum_{\substack{q \leq D \\ q_0 | q \\ (q, Z_{N^{2\epsilon}})=1}}  \frac{1}{\phi(q)} \sum_{\substack{\chi \, \, (q) \\ \chi \neq \chi_0}} \bigg |  \sum_{m,n } f(m) g(n) \chi' (mn) 1_{(mn,q/q') > 1} \bigg|.
\end{align}
We have
\begin{align*}
1_{(mn,q/q') > 1} = 1_{(n,q/q') > 1} + 1_{(m,q/q') > 1} -1_{(m,q/q') > 1}1_{(n,q/q') > 1}.
\end{align*}
Define $h_1(n,d) := 1$ and $h_2(n,d) := 1_{(n,d)>1}.$ Then (\ref{ave2}) is bounded by
\begin{align} \label{ave3}
\sum_{i,j=1}^2 \sum_{\substack{q \leq D \\ q_0 | q \\ (q, Z_{N^{2\epsilon}})=1}}  \frac{1}{\phi(q)} \sum_{\substack{\chi \, \, (q) \\ \chi \neq \chi_0}} \bigg |  \sum_{m,n } f(m)h_i(m,q/q')\chi'(m) g(n)h_j(n,q/q') \chi' (n)  \bigg|.
\end{align}
If $i =2$ or $j=2,$ we remove the additional conditions for $q$, write $q=dq'$, and bound the sum by
\begin{align*}
 \sum_{\substack{q \leq D}}  \frac{1}{\phi(q)} \sum_{\substack{\chi \, \, (q) \\ \chi \neq \chi_0}} &\bigg |  \sum_{m,n } f(m)h_i(m,q/q')\chi'(m) g(n)h_j(n,q/q') \chi' (n)  \bigg| \\
& \ll  \sum_{d \leq D} \frac{1}{\phi(d)} \sum_{\substack{q' \leq D}}  \frac{1}{\phi(q')} \sideset{}{'}\sum_{\substack{\chi \, \, (q')}} \bigg |  \sum_{m,n } f(m)h_i(m,d)\chi(m) g(n)h_j(n,d) \chi (n)  \bigg| \\
& \ll (\log N) \sum_{d \leq D} \frac{1}{\phi(d)} \sup_{E \leq D} \frac{1}{E} \bigg(  \sum_{\substack{q' \sim E}}  \frac{q'}{\phi(q')} \sideset{}{'} \sum_{\substack{\chi \, \, (q')}} \bigg |  \sum_{m } f(m)h_i(m,d)\chi(m)  \bigg|^2 \bigg)^{1/2} \\ 
& \hspace{80pt}  \cdot\bigg(  \sum_{\substack{q' \sim E}}   \frac{q'}{\phi(q')} \sideset{}{'}\sum_{\substack{\chi \, \, (q')}} \bigg |  \sum_{n }  g(n)h_j(n,d) \chi (n)  \bigg|^2 \bigg)^{1/2},
\end{align*}
where in the last bound we have split the sum over $q'$ dyadically and applied Cauchy-Schwarz. By Lemma \ref{large} and by the assumptions on $f$ and $g,$ the last expression is bounded by
\begin{align} \label{j2}
(\log N)\sum_{d \leq D} \frac{1}{\phi(d)} \sup_{E \leq D} \frac{1}{E}  &\bigg(  \bigg(E^2 + A_1 \bigg) \sum_{m } |f(m)h_i(m,d)|^2 \bigg)^{1/2}   \\ \nonumber
& \hspace{20pt}  \cdot\ \bigg(  \bigg(E^2 + N/A_1 \bigg) \sum_{n } |g(n)h_j(n,d)|^2  \bigg)^{1/2}
\end{align}
Suppose at first that $j=2$ so that $h_j(n,d)= 1_{(n,d)>1}.$ Since $g(n)$ is supported on $P^-(n) \geq N^\alpha,$ this means that $(n,d) \geq N^\alpha.$ We obtain that (\ref{j2}) is bounded by
\begin{align*}
(\log N)&\sum_{d \leq D} \frac{1}{\phi(d)} \sup_{E \leq D} \frac{1}{E}  \bigg(  \bigg(E^2 + A_1 \bigg) \sum_{m } |f(m)|^2 \bigg)^{1/2}   \\ 
& \hspace{150pt}  \cdot\ \bigg(  \bigg(E^2 + N/A_1 \bigg) \sum_{\substack{k| d \\ N^{\alpha} \leq k \leq D}}  \sum_{n \asymp N/(A_1k)} |g(kn)|^2  \bigg)^{1/2} \\
& \ll (\log^2 N) \sum_{d \leq D} \frac{\tau(d)^{1/2}}{\phi(d)} \sup_{E \leq D} \frac{1}{E}  \bigg( \bigg(E^2 + A_1 \bigg) A_1 \bigg)^{1/2} \bigg( \bigg(E^2 + N/A_1 \bigg) N^{1-\alpha}/A_1 \bigg)^{1/2} \\
& \leq  (\log^4 N) \sup_{E \leq D}  ( E N^{(1-\alpha)/2}  + N^{1-\alpha/2}/A_1 + A_1 + N^{1-\alpha/2} /E) \ll N^{1- \alpha/3},
\end{align*}
which is sufficient. For $i=2, j=1,$ since $f(m)=1_{\PP}(m)(\log m)1_{n \leq A_1},$ we have that if $(m,d)>1,$ then $m$ is a prime dividing  $d$. Hence, by a similar argument as above we get a bound $\ll  N^{1- \alpha/3}$.

For $i=j=1$ we have to estimate
\begin{align} \label{j1}
 \sum_{\substack{q \leq D \\ q_0 | q \\ (q, Z_{N^{2\epsilon}})=1}}  \frac{1}{\phi(q)} \sum_{\substack{\chi \, \, (q) \\ \chi \neq \chi_0}} \bigg |  \sum_{m,n } f(m)\chi'(m) g(n) \chi' (n)  \bigg|.
\end{align}
We begin by extracting a factor of $1/\phi(q_0)$ similarly as in the proof of \cite[Theorem 4.2]{BFM}: if $q'$ denotes the modulus of $\chi'$, then (\ref{j1}) is bounded by (writing  $q'=ab,$ where $a | q_0$ and $(b,q_0)=1$; recall that $q_0$ is square-free)
\begin{align*}
\sum_{\substack{q' \leq D \\ (q', Z_{N^{2\epsilon}})=1}}   \sideset{}{'}\sum_{\substack{\chi \, \, (q') }} &\bigg |  \sum_{m,n } f(m)\chi(m) g(n) \chi(n)  \bigg|  \sum_{\substack{q \leq D \\ [q',q_0] | q \\ (q, Z_{N^{2\epsilon}})=1}} \frac{1}{\phi(q)} \\
& \ll \frac{\log N}{\phi (q_0)} \sum_{a | q_0} \sum_{\substack{b \leq D/a \\ (b, q_0Z_{N^{2\epsilon}})=1}}  \frac{1}{\phi (b)} \sideset{}{'}\sum_{\substack{\chi \, \, (ab) }} \bigg |  \sum_{m,n } f(m)\chi(m) g(n) \chi (n)  \bigg| \\
& \ll \frac{\log^3 N}{\phi (q_0)} \sup_{\substack{A,B \\ AB \leq D \\ A \leq q_0}} \sum_{\substack{A < a \leq 2A \\ a | q_0}} \sum_{\substack{B \leq b \leq 2B \\ (b, q_0 Z_{N^{2\epsilon}})=1}} \frac{1}{\phi (b)} \sideset{}{'}\sum_{\substack{\chi \, \, (ab) }} \bigg |  \sum_{m,n } f(m)\chi(m) g(n) \chi (n)  \bigg| \\
\end{align*}
Hence, it remains to show that
\begin{align*}
\sup_{\substack{A,B \\ AB \leq D \\ A \leq q_0}} \sum_{\substack{A < a \leq 2A \\ a | q_0}} \sum_{\substack{B \leq b \leq 2B \\ (b, q_0 Z_{N^{2\epsilon}})=1}} \frac{1}{\phi (b)} \sideset{}{'}\sum_{\substack{\chi \, \, (ab) }} \bigg |  \sum_{m,n } f(m)\chi(m) g(n) \chi (n)  \bigg| \, \ll_C \frac{N}{\log^C N}
\end{align*}

 For $B \geq N^{\epsilon}$ we have by Cauchy-Schwarz and Lemma \ref{large}
\begin{align*}
& \sum_{\substack{A < a \leq 2A \\ a | q_0}} \sum_{\substack{B \leq b \leq 2B \\ (b, q_0 Z_{N^{2\epsilon}})=1}} \frac{1}{\phi (b)} \sideset{}{'}\sum_{\substack{\chi \, \, (ab) }} \bigg |  \sum_{m,n } f(m)\chi(m) g(n) \chi (n)  \bigg|  \\
& \ll \frac{1}{B} \bigg( \sum_{q \ll AB} \frac{q}{\phi(q)} \sideset{}{'}\sum_{\substack{\chi \, \, (q) }} \bigg |  \sum_{m } f(m)\chi(m) \bigg|^2\bigg)^{1/2} \bigg(\sum_{q \ll AB} \frac{q}{\phi(q)} \sideset{}{'}\sum_{\substack{\chi \, \, (q) }} \bigg |  \sum_{n } g(n) \chi (n)  \bigg|^2   \bigg)^{1/2} \\
& \ll  \frac{\log N}{B} \bigg((AB)^2 A_1 + A_1^2 \bigg)^{1/2} \bigg( (AB)^2 N/A_1 + (N/A_1)^2 \bigg)^{1/2} \\
& \leq (\log N) ( A^2 B N^{1/2}  + A N / A_1^{1/2} +  A_1^{1/2} A N^{1/2} + N/B ) \ll N^{1-\epsilon},
\end{align*}
if $\epsilon$ is small enough in terms of $\delta$ and $\alpha.$

For $B < N^{\epsilon}$  we replace  $f(m)$ by $\Lambda(m)1_{m \leq A_1},$ which causes an error term bounded by using a trivial bound 
\begin{align*}
\sum_{\substack{A < a \leq 2A \\ a | q_0}} \sum_{\substack{B \leq b \leq 2B \\ (b, q_0 Z_{N^{2\epsilon}})=1}} \frac{1}{\phi (b)} \sideset{}{'}\sum_{\substack{\chi \, \, (ab) }} &\bigg |  \sum_{\substack{p^k \leq A_1 \\ k \geq 2}} \sum_{n } \log(p) \chi(p^k) g(n) \chi (n)  \bigg| \\
 & \hspace{20pt} \ll (AB)^2 N^{1-\alpha/2} \log N \ll N^{1-\alpha/3} 
\end{align*}
if $\epsilon$ is sufficiently small. We then use Cauchy-Schwarz to get
\begin{align} \nonumber
\sum_{\substack{A < a \leq 2A \\ a | q_0}} \sum_{\substack{B \leq b \leq 2B \\ (b, q_0 Z_{N^{2\epsilon}})=1}} & \frac{1}{\phi (b)} \sideset{}{'}\sum_{\substack{\chi \, \, (ab) }} \bigg |  \sum_{m,n } \Lambda(m)1_{m \leq A_1} \chi(m) g(n) \chi (n)  \bigg|  \\ \label{smallb}
& \ll  \bigg( \sum_{\substack{A < a \leq 2A \\ a | q_0}} \sum_{\substack{B \leq b \leq 2B \\ (b, q_0 Z_{N^{2\epsilon}})=1}} \frac{1}{\phi (b)} \sideset{}{'}\sum_{\substack{\chi \, \, (ab) }} \bigg |  \sum_{m \leq A_1 } \Lambda (m) \chi(m) \bigg|^2\bigg)^{1/2} \\ \nonumber
& \hspace{140pt} \cdot \bigg(\frac{1}{B}\sum_{q \ll AB} \frac{q}{\phi(q)} \sideset{}{'}\sum_{\substack{\chi \, \, (q) }} \bigg |  \sum_{n } g(n) \chi (n)  \bigg|^2   \bigg)^{1/2} 
\end{align}
Since $AB < N^{2\epsilon} < A_1^{1/2-\delta},$ we may use the bound Lemma \ref{char} with $A_1$ in place of $N$ (decreasing $\epsilon$ also if necessary), which yields
\begin{align*}
\sum_{\substack{A < a \leq 2A \\ a | q_0}}& \sum_{\substack{B \leq b \leq 2B \\ (b, q_0 Z_{N^{2\epsilon}})=1}} \frac{1}{\phi (b)} \sideset{}{'}\sum_{\substack{\chi \, \, (ab) }} \bigg |  \sum_{ m \leq A_1 } \Lambda(m)\chi(m) \bigg|^2 \\
& \ll A_1  \sum_{\substack{A < a \leq 2A \\ a | q_0}} \sum_{\substack{B \leq b \leq 2B \\ (b, q_0 Z_{N^{2\epsilon}})=1}} \frac{1}{\phi (b)} \sideset{}{'}\sum_{\substack{\chi \, \, (ab) }} \bigg |  \sum_{m \leq A_1 } \Lambda(m)\chi(m) \bigg| \ll_C \frac{A_1^2}{\log^{2(C+5)} N}.
\end{align*}
Using Lemma \ref{large} to bound the sum with $g(n) \chi (n)$ in (\ref{smallb}) we get that 
\begin{align*}
\sum_{\substack{A < a \leq 2A \\ a | q_0}} & \sum_{\substack{B \leq b \leq 2B \\ (b, q_0 Z_{N^{2\epsilon}})=1}} \frac{1}{\phi (b)} \sideset{}{'}\sum_{\substack{\chi \, \, (ab) }} \bigg |  \sum_{m,n } f(m)\chi(m) g(n) \chi (n)  \bigg| \\
& \ll_C \frac{A_1}{\log^{C+5} N}  \bigg( A^2 B N/A_1 + (N/A_1)^2/B \bigg)^{1/2} \ll_C \frac{N}{\log^{C+5} N}.
\end{align*}
\end{proof}

\section{Chen's sieve upper bound for prime pairs} \label{chensec}
In this section we will apply Chen's sieve to obtain an upper bound for prime pairs, which is 3.99 times the expected main term. As will become apparent in the next section, the exact numerical value of this constant does not matter, only that it is stricly less than four. To state the result, we first need to set up some notation from \cite{BFM}.

Let $K>1,$ $N > 3,$ and define the Maynard-Tao sieve weights (recall the definition of $Z_T$ from (\ref{Z}))
\begin{align} \label{l1}
\lambda_{d_1, \dots, d_K} = \begin{cases} \bigg(\prod_{i=1}^K \mu (d_i)\bigg) \sum_{j=1}^{J} \prod_{\ell=1}^K F_{\ell,j}\bigg( \frac{\log d_\ell}{\log N}\bigg), & \text{if} \, \, (d_1\cdots d_K, Z_{N^{4 \epsilon}}) =1, \\
0, & \text{otherwise,} \end{cases}
\end{align}
for some fixed $J$, where $F_{\ell,j}:[0,\infty) \to \R$ are smooth compactly supported functions, not identically zero, satisfying a support condition
\begin{align} \label{l2}
\sup \bigg \{\sum_{\ell=1}^K t_l: \, \, \prod_{\ell=1}^K F_{\ell,j}(t_\ell) \neq 0 \bigg \} \leq \delta
\end{align}
for all $j=1,2,\dots,J$ for some small $\delta >0.$ Note that this implies that $\lambda_{d_1,\dots, d_K}$ are supported on $d_1 \cdots d_K \leq N^{\delta}.$ Define
\begin{align*}
F(t_1, \dots, t_K) := \sum_{j=1}^J \prod_{\ell=1}^K F_{\ell,j}'\bigg( t_\ell \bigg),
\end{align*}
where $F_{\ell,j}'$ is the derivative of $F_{\ell,j}.$ Set
\begin{align} \label{Lint}
L_K(F)& := \int_0^\infty \cdots \int_0^\infty \bigg( \int_0^\infty \int_0^\infty F(t_1,\dots t_K) dt_{K-1} dt_K\bigg)^2 dt_1 \cdots dt_{K-2} \\ \nonumber
 & =   \sum_{j,j'=1}^J F_{K-1,j}(0)F_{K-1,j'}(0)F_{K,j}(0)F_{K,j'}(0) \prod_{\ell=1}^{K-2} \int_0^\infty F'_{\ell,j}(t_\ell)F'_{\ell,j'}(t_\ell) dt_\ell.
\end{align}
 We note here that  $F_{\ell,j}$ will be chosen so that $F(t_1,\dots,t_K)$ is symmetric with respect to permutations of the variables (cf. \cite{BFM}). Let $Z_{N^{4\epsilon}}$ be as in (\ref{Z}) and define
\begin{align*}
W := \prod_{\substack{p \leq \epsilon \log N \\ p \nmid Z_{N^{4\epsilon}}}} p, \quad  \quad \quad \quad B := \frac{\phi(W)}{W} \log N.
\end{align*}

Using the above notation, we have that \cite[Lemma 4.6 (iii)]{BFM} holds with the constant $4$ replaced by $3.99:$
\begin{prop} \label{pairs} For all  sufficiently large $N$ the following holds: 

Let $\HH = \{h_1, \dots, h_K\} \subseteq [0,N]$ be an admissible $K$-tuple such that 
\begin{align} \label{smooth}
P^+\bigg (\prod_{1 \leq i < j\leq K} (h_j-h_i) \bigg) \leq \epsilon \log N.
\end{align}
Let $b$ be an integer such that 
\begin{align*}
\bigg( \prod_{j=1}^K (b+h_j) , W\bigg) =1.
\end{align*}
Then for all distinct $h_j,h_\ell \in \HH$ we have
\begin{align*}
S:=\sum_{\substack{N < n \leq 2N \\ n \equiv b \, (W)}} 1_{\PP}(n+h_j)1_{\PP}(n+h_\ell)\bigg( \sum_{\substack{d_1, \dots, d_K \\ d_i | n+h_i}} \lambda_{d_1, \dots, d_K} \bigg)^2 \leq (3.99 + \mathcal{O}(\delta)) \frac{N}{W} B^{-K}L_K(F).
\end{align*}
\end{prop}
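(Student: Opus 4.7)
\emph{Proof plan.} I would follow \cite[Lemma 4.6 (iii)]{BFM}, which proves the identical statement with $3.99$ replaced by $4$ via Selberg's upper-bound sieve, and replace Selberg's sieve by Chen's sieve in the central analytic step. Expanding the square and using that $[d_j, e_j] \leq N^\delta$ together with primality of $n + h_j$ forces $d_j = e_j = 1$ (likewise $d_\ell = e_\ell = 1$), the Chinese Remainder Theorem---together with the admissibility of $\HH$ and \eqref{smooth}---combines the surviving congruences into $n \equiv a \pmod{q}$ with $q := W \prod_{i \neq j, \ell}[d_i, e_i] \leq W N^\delta$. After this reduction, $S$ becomes a linear combination, indexed by $(\mathbf{d}, \mathbf{e})$, of sums of the form
\begin{align*}
T(q, a) := \sum_{\substack{N < n \leq 2N \\ n \equiv a \, (q)}} 1_{\PP}(n + h_j) 1_{\PP}(n + h_\ell).
\end{align*}

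The central step is to bound $T(q, a)$ by applying Chen's sieve to the variable $n + h_\ell$. For $z = N^\alpha$ with a small but positive $\alpha$, I would write a Chen-type upper bound
\begin{align*}
1_{\PP}(n + h_\ell) \leq \sum_{d \mid (n + h_\ell, P(z))} \rho_d - \text{(Buchstab correction involving $\Lambda_0$-type sums)},
\end{align*}
following the construction of Chen \cite{Chen} and Pan \cite{Pan}. The Selberg-like leading term is estimated using the modified Bombieri-Vinogradov Proposition \ref{bv} with $q_0 = W$, while the Buchstab correction---which is a convolution sum of a prime (restricted to $m \leq A_1$ for a suitable $A_1$) with an almost-prime---is precisely of the shape $\Lambda_0$ of \eqref{p0} and is handled by Proposition \ref{bv2}. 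Because $q \ll N^{1/2 - \delta}$, the level of distribution provided by these two equidistribution estimates is more than enough, and the optimization underlying Chen's classical improvement for twin primes gives $T(q, a) \leq (4 - c + O(\delta)) M(q; h_\ell - h_j)$, where $c > 0$ is an absolute constant and $M(q; h)$ is the expected main term (the usual twin-prime-type singular series weighted by $1/\phi(q)$). For $\delta$ sufficiently small the constant is at most $3.99$.

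Substituting back and summing over $\mathbf{d}, \mathbf{e}$ reduces the problem to the standard Maynard--Tao Mellin-inversion evaluation of \cite[Section 4]{BFM}, which yields $(W/\phi(W))^{K-2} (\log N)^{-(K-2)} L_K(F)$; combined with the $\phi(q)^{-1} (\log N)^{-2}$ inherited from $T(q, a)$ and using the definition $B = (\phi(W)/W) \log N$, this produces the claimed bound $(3.99 + O(\delta))(N/W) B^{-K} L_K(F)$. The principal obstacle is the middle step: verifying Chen's sieve optimization in the present setting, where the smooth auxiliary modulus $W < N^\epsilon$ is carried through the sieve and the exceptional-zero modulus is avoided via coprimality with $Z_{N^{4\epsilon}}$. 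Since Chen's gain over Selberg is a fixed absolute constant and we only need to beat $4$ by $0.01$, the numerical optimization is not delicate; the main care lies in aligning Chen's Buchstab step with the precise shape of $\Lambda_0$ so that Proposition \ref{bv2} can be invoked, and in tracking the singular-series contributions from primes dividing $q/W$ into the final main-term local factors.
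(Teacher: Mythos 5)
Your proposal has the order of operations backwards, and this is a fatal structural flaw rather than a presentational issue. You propose to expand the square $\nu_{\HH,j,\ell}(n)$ first, reducing $S$ to a linear combination of twin-prime counts $T(q,a)$ indexed by $(\mathbf{d},\mathbf{e})$, and then apply Chen's sieve to each $T(q,a)$. But the coefficients $\lambda_{\mathbf{d}}\lambda_{\mathbf{e}}$ are signed (each $\lambda_{d_1,\ldots,d_K}$ carries the M\"obius factor $\prod_i\mu(d_i)$), so an upper bound for each $T(q,a)$ gives you nothing: where $\lambda_{\mathbf{d}}\lambda_{\mathbf{e}}<0$ you would need a matching lower bound for $T(q,a)$, which is a lower bound for a twin-prime count and entirely out of reach. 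This is exactly why such arguments must apply the sieve inequality to $S$ as a whole, treating the full square $\nu_{\HH,j,\ell}(n)\geq 0$ (times $1_\PP(n+h_j)$) as a single non-negative weight. Concretely, the paper first proves the pointwise inequality
\begin{align*}
1_{(n+h_\ell,P(Z))=1} \leq 1_{(n+h_\ell,P(Y))=1} - \tfrac{1}{2}\sum_{Y<p\leq Z}1_{p\mid n+h_\ell}1_{(n+h_\ell,P(Y))=1} + \tfrac{1}{2}\sum_{Y<p<q<r\leq Z}\sum_{(s,P(q))=1}1_{n+h_\ell=pqrs},
\end{align*}
multiplies it by the non-negative weight $1_\PP(n+h_j)\nu_{\HH,j,\ell}(n)$ and sums, getting $S\leq S_1-S_2/2+S_3/2$ \emph{before} any expansion of $\nu$. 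Only inside each $S_i$ is the square expanded to verify the linear-sieve hypotheses on the sequence $a_n=1_\PP(n+h_j)\nu_{\HH,j,\ell}(n)$.

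You also omit the switching step that makes $S_3$ tractable. After Chen's inequality one has a term where $n+h_\ell = pqrs$ with three medium primes and a large cofactor; the paper handles it by applying a Selberg-type upper-bound sieve to $n+h_j$ instead (with $a_{n+h_\ell}$, supported on such products, playing the role of the non-negative weight), which is where the Buchstab function and Proposition \ref{bv2} for $\Lambda_0$ actually enter. Your sketch puts the $\Lambda_0$-type equidistribution inside a ``Buchstab correction'' applied directly to $1_\PP(n+h_\ell)$, which does not match the structure of Chen's method here and would not isolate a convolution to which Proposition \ref{bv2} applies. Finally, the gain is not a generic ``fixed absolute constant'' transplanted from the twin-prime setting: the paper chooses $\alpha=1/7$, $\beta=3/14$ and computes $\Omega_1-\Omega_2+\Omega_3<3.99$ explicitly, which is what the proof actually has to verify.
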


The proof in \cite[Lemma 4.6 (iii)]{BFM} uses Selberg's sieve combined with the Modified Bombieri-Vinogradov Theorem. Our improvement comes from using Chen's sieve instead of Selberg's sieve. Similarly as in \cite[Lemma 4.6 (iii)]{BFM}, we first note that we may replace
\begin{align*}
\bigg( \sum_{\substack{d_1, \dots d_K \\ d_i | n+h_i}} \lambda_{d_1, \dots, d_K} \bigg)^2 \quad \text{by} \quad
\nu_{\HH,j,\ell} (n) := \bigg( \sum_{\substack{d_1, \dots, d_K \\ d_i | n+h_i \\ d_j=d_\ell=1}} \lambda_{d_1, \dots, d_K} \bigg)^2 1_{((n+h_j)(n+h_\ell),Z_{N^{4\epsilon}})=1} 
\end{align*}
in the sum $S.$

We then require the following weighted sieve inequality of Chen type (this is essentially Lemma 4.1 of \cite{Wu}, which is in there attributed to Chen \cite{Chen}; according to Wu, the idea that this simple sieve inequality is sufficient is due to Pan \cite{Pan}).
\begin{lemma} \label{chen} Let $0 < \alpha < \beta < 1/4,$ $Y:= N^\alpha,$ and $Z:=N^{\beta}.$
Then $S \leq S_1 -S_2 /2 + S_3 /2,$ where
\begin{align*}
S_1 & := \sum_{\substack{N < n \leq 2N \\ n \equiv b \, (W)}} 1_{\PP}(n+h_j)1_{(n+h_\ell,P(Y))=1}\nu_{\HH,j,\ell} (n)  \\
S_2 & := \sum_{Y < p \leq Z} \sum_{\substack{N < n \leq 2N \\ n \equiv b \, (W) \\ p | n+h_\ell}} 1_{\PP}(n+h_j)1_{(n+h_\ell,P(Y))=1}\nu_{\HH,j,\ell} (n)  , \quad \quad \text{and} \\
S_3 &:= \sum_{\substack{N < n \leq 2N \\ n \equiv b \, (W)}} 1_{\PP}(n+h_j) \sum_{Y < p < q < r \leq Z} \sum_{(s,P(q))=1} 1_{n+h_\ell=pqrs}\nu_{\HH,j,\ell} (n) .
\end{align*}
\end{lemma}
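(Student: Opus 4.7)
The plan is to prove a pointwise Chen-type sieve inequality and then sum with the nonnegative weight $1_{\PP}(n+h_j)\nu_{\HH,j,\ell}(n)$. Writing $m := n+h_\ell$, the pointwise inequality I aim to establish is
\[
1_{\PP}(m) \;\le\; 1_{(m,P(Y))=1}\Bigl(1 - \tfrac12\,\omega_{(Y,Z]}(m)\Bigr) + \tfrac12\,D(m),
\]
where $\omega_{(Y,Z]}(m) := \#\{p : Y<p\le Z,\ p\mid m\}$ and $D(m)$ counts the decompositions $m = pqrs$ with $Y<p<q<r\le Z$ primes and $(s,P(q))=1$. Summing this pointwise bound over $N<n\le 2N$ with $n\equiv b\;(W)$ against the nonnegative weight matches the three terms on the right precisely with $S_1$, $-S_2/2$, and $S_3/2$ from the definitions, yielding $S\le S_1 - S_2/2 + S_3/2$.

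To prove the pointwise inequality, observe that when $m$ is prime one has $\omega_{(Y,Z]}(m) = D(m) = 0$ (since $m>Z$), so the inequality reduces to $1\le 1$. When $m$ is composite with $(m,P(Y))=1$, the claim reduces to the combinatorial estimate $\omega_{(Y,Z]}(m)\le 2+D(m)$. In the squarefree case, write $m = q_1\cdots q_k$ in increasing order with $Y<q_1$, and let $t$ be the number of $q_i$ lying in $(Y,Z]$. A decomposition as counted by $D(m)$ corresponds to indices $a<b<c\le t$ with $(p,q,r) = (q_a, q_b, q_c)$ and $s$ the product of the remaining $q_i$. The condition $(s,P(q))=1$ forces every surviving index in $s$ to exceed $b$; this is possible only when $\{1,\ldots,b-1\}\subseteq\{a\}$, pinning $(a,b)=(1,2)$ and leaving $c\in\{3,\ldots,t\}$ free. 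Consequently $D(m) = \max(0, t-2)$ and $\omega_{(Y,Z]}(m) - D(m) = \min(t,2)\le 2$, as required.

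The main obstacle is the non-squarefree case: a repeated prime factor (e.g.\ $q_1^2\mid m$) can force $q_1$ into every candidate $s$, breaking the condition $P^-(s)>q$ and shrinking $D(m)$ below $\omega_{(Y,Z]}(m)-2$. Here one leverages the size constraints $m\asymp N$, $Y = N^\alpha$, $Z = N^\beta$ and $\beta<1/4$: the total number of prime factors of $m$ with multiplicity is at most $\lfloor 1/\alpha\rfloor$, so only finitely many non-squarefree shapes are geometrically possible, each of which is dealt with by direct verification (or by absorbing the negligible contribution via the Bombieri--Vinogradov input of Proposition~\ref{bv2}), following the classical weighted sieve reduction of Chen~\cite{Chen} simplified by Pan~\cite{Pan} and recorded in Wu~\cite{Wu}. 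Once the pointwise inequality holds, the summation against $1_{\PP}(n+h_j)\nu_{\HH,j,\ell}(n)\ge 0$ yields the lemma.
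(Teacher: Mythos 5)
Your approach matches the paper's: establish a pointwise Chen-type sieve inequality in the variable $m=n+h_\ell$ and then sum against the nonnegative weight $1_\PP(n+h_j)\nu_{\HH,j,\ell}(n)$. Your squarefree analysis --- observing that the condition $(s,P(q))=1$ forces $p$ and $q$ to be the two smallest prime factors of $m$, both exactly dividing $m$, so $D(m)=\max(0,t-2)$ and $\omega_{(Y,Z]}(m)-D(m)\le 2$ --- is exactly the paper's argument (``$p$ and $q$ are fixed and there are $k-2$ ways to choose $r$''). The paper puts $1_{(m,P(Z))=1}$ on the left rather than $1_\PP(m)$, a harmless strengthening since both equal $1$ for primes $m>Z$.

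The difference is that you explicitly flag the non-squarefree case, which the paper's written proof silently skips over. However, your proposed remedies do not close it. Invoking Proposition~\ref{bv2} is category-mismatched: the lemma is a pure pointwise/combinatorial inequality with no error term, so there is nothing for a Bombieri--Vinogradov input to absorb. And ``direct verification of finitely many shapes'' does not help either, because for a fixed bad shape (say $P^-(m)^2\mid m$ with $t\ge 3$ primes of $m$ in $(Y,Z]$) the right-hand side of the pointwise inequality equals $1-t/2<0$ \emph{for every} such $m$, and there are many such $m\asymp N$ for the parameters used ($\alpha=1/7$, $\beta=3/14$). So the pointwise bound genuinely fails on a nonempty set; the claim ``$D(m)=t-2$'' needs the two smallest prime factors to exactly divide $m$. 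This is a small wrinkle shared with the paper's own proof (it is presumably handled more carefully in the cited source, Wu's Lemma~4.1, e.g.\ by the observation that $1_\PP(m)-[\text{RHS}]$ is $\le 0$ and vanishes on primes, so only the sign of RHS on composites is at stake, plus a separate argument or restriction controlling the few $m$ with a repeated prime factor in $(Y,Z]$). Since you flagged it but offered fixes that do not apply, this remains a gap in your write-up, though no worse a gap than in the paper's own two-line case analysis.
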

\begin{proof}
By positivity of $\nu_{\HH,j,\ell} (n)$ it suffices to show that for any $n \in (N+h_\ell,2N+h_\ell]$
\begin{align} \label{sieve}
1_{(n,P(Z))=1} \leq 1_{(n,P(Y)) = 1} - \frac{1}{2} \sum_{Y < p \leq Z} 1_{p|n} 1_{(n,P(Y)) = 1} +\frac{1}{2} \sum_{Y < p < q < r \leq Z} \sum_{(s,P(q))=1} 1_{n=pqrs}.
\end{align}
For $(n,P(Y))>1$ this is obvious, so let $(n,P(Y))=1$ and denote $k=\sum_{Y < p \leq Z} 1_{p|n}.$ If $k=0,$ then both sides of (\ref{sieve}) are equal to one. For $k \geq 1$ the left-hand side is zero. If $k=1,$ then the right-hand side is $1-1/2+0 = 1/2 > 0.$ For $k \geq 2$ the right-hand side is $1-k/2 + (k-2)/2=0,$ since in the last sum $p$ and $q$ are fixed and there are $k-2$ ways to choose $r$.
\end{proof}

\begin{remark} Note that $\beta < 1/4$ implies that in the sum $S_3$ we have $s \gg N/(pqr) > N^{1/4} > q.$ The above lemma holds also for $\beta \geq 1/4,$ but then we sometimes may have $s=1$ in the sum $S_3.$
\end{remark}

We now proceed to estimate $S_1$, $S_2$ and $S_3$ separately by applying the linear sieve. For this we use similar notations as in  \cite[Chapters 11 and 12]{FI} (using the subscript `lin' for clarity): we let $F_{\text{lin}}(s),f_{\text{lin}}(s)$ be the continuous solution to the system of delay-differential equations
\begin{align*}
\begin{cases} (sF_{\text{lin}}(s))' = f_{\text{lin}}(s-1) \\
(sf_{\text{lin}}(s))' = F_{\text{lin}}(s-1)
\end{cases}
\end{align*}
with the condition
\begin{align*}
\begin{cases} sF_{\text{lin}}(s) = 2e^{\gamma}, & \text{if} \, \, 1 \leq s \leq 3 \\
sf_{\text{lin}}(s) = 0, & \text{if} \, \, s\leq 2.
\end{cases}.
\end{align*}
Here $\gamma$ is the Euler-Mascheroni constant. We record here that for $2 \leq s \leq 4$
\begin{align*}
f_{\text{lin}}(s) = \frac{2 e^\gamma \log (s-1)}{s}.
\end{align*}
By \cite[Chapters 11 and 12]{FI} we then have
\begin{lemma}\label{linear} \emph{\textbf{(Linear sieve).}}
Let $(a_n)_{n \geq 1}$ be a sequence of non-negative real numbers. For some fixed $X$ depending only on the sequence $(a_n)_{n \geq 1}$, define $r_d$ for all square-free $d \geq 1$ by
\begin{align*}
\sum_{n \equiv 0 \, (d)} a_n = g(d) X + r_d,
\end{align*}
where $g(d)$ is a multiplicative function, depending only on the sequence $(a_n)_{n \geq 1}$, satisfying $0 \leq g(p) < 1$ for all primes $p.$ Let $D\geq 2$ (the level of distribution), and let $z=D^{1/s}$ for some $s\geq 1.$ Suppose that there exists a constant $L >0$ that for any $2 \leq w < z$ we have
\begin{align*}
\prod_{w \leq p < z} (1-g(p))^{-1} \leq \frac{\log z}{\log w} \bigg(1+\frac{L}{\log w}\bigg).
\end{align*}
Then
\begin{align*}
\sum_{n} a_n 1_{(n,P(z))=1} &\leq (F_{\text{\emph{lin}}}(s) + \mathcal{O}(\log^{-1/6} D)) X \prod_{p\leq z} (1-g(p)) + \sum_{\substack{d \leq D \\ d \, \, \text{\emph{squarefree}}}} |r_d|, \\
\sum_{n} a_n 1_{(n,P(z))=1} &\geq (f_{\text{\emph{lin}}}(s) - \mathcal{O}(\log^{-1/6} D)) X \prod_{p\leq z} (1-g(p)) - \sum_{\substack{d \leq D \\ d \, \, \text{\emph{squarefree}}}} |r_d|.
\end{align*}
\end{lemma}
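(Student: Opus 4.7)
Since the statement is the standard Rosser--Iwaniec linear sieve in the formulation of Friedlander--Iwaniec, my plan is to invoke the derivation from \cite[Chapters 11--12]{FI} rather than rewrite it, but the structure of the argument is the following. Set $S(\A,z) := \sum_n a_n 1_{(n,P(z))=1}$. By inclusion-exclusion, $S(\A,z) = \sum_{d \mid P(z)} \mu(d) A_d$, where $A_d = \sum_{n \equiv 0\,(d)} a_n = g(d) X + r_d$. Replacing $A_d$ by $g(d)X$ gives the heuristic main term $X \prod_{p < z}(1-g(p))$, but the full M\"obius sum ranges over $d \mid P(z)$, far beyond the level of distribution $D$, so one cannot use this directly.

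The central device is the Rosser--Iwaniec combinatorial identity, obtained by iterating Buchstab's identity
\[
S(\A,z) = S(\A,w) - \sum_{w \leq p < z} S(\A_p, p),
\]
and truncating the iteration once the accumulated modulus $p_1 p_2 \cdots p_r$ exceeds $D$. This yields two collections of sieve weights $\lambda_d^{\pm}$, supported on squarefree $d \leq D$, with $\sum_{d \mid n} \lambda_d^- \leq 1_{(n,P(z))=1} \leq \sum_{d \mid n} \lambda_d^+$, so that the corresponding upper and lower bounds have remainder controlled by $\sum_{d \leq D, \, d \, \text{squarefree}} |r_d|$. The dimension-one hypothesis $\prod_{w \leq p < z}(1-g(p))^{-1} \leq (\log z / \log w)(1 + L/\log w)$ is precisely what is needed to evaluate asymptotically the Mertens-type products appearing in the main terms of these truncated sums.

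After carrying through the iteration, the main terms turn out to be $X \prod_{p<z}(1-g(p))$ multiplied by functions which, as $D \to \infty$ with $s = \log D/\log z$ fixed, converge to $F_{\text{lin}}(s)$ and $f_{\text{lin}}(s)$ respectively. These limits satisfy the stated delay-differential equations together with the boundary conditions $sF_{\text{lin}}(s) = 2 e^\gamma$ for $1 \leq s \leq 3$ and $sf_{\text{lin}}(s) = 0$ for $s \leq 2$, which arise from the Fundamental Lemma of the sieve (the regime where $z$ is very small compared to $D$). The quantitative rate of convergence is what produces the error term $\mathcal{O}(\log^{-1/6} D)$.

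The hard part in this scheme is the error analysis: one must verify that the accumulated errors from the Mertens approximation and from the truncation step can be summed uniformly in $s$ to give $\mathcal{O}(\log^{-1/6} D)$, which in turn rests on sharp estimates for the kernel of the delay-differential system. Since this is worked out in complete detail in \cite[Chapters 11--12]{FI}, for the purpose of our application I would simply cite that reference, as the author does above.
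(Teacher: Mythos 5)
Your proposal matches the paper exactly: the paper gives no proof of this lemma and simply cites \cite[Chapters 11 and 12]{FI}, which is precisely what you propose to do. The additional sketch you give of the Rosser--Iwaniec derivation (Buchstab iteration, truncation at level $D$, and the Fundamental Lemma supplying the boundary conditions for $F_{\text{lin}}, f_{\text{lin}}$) is an accurate summary of the contents of those chapters, though it is not part of the paper's own treatment.
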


We now estimate the sums $S_1,S_2$ and $S_3$ in the following three lemmata.
\begin{lemma} \label{s1} We have
\begin{align*}
S_1 \leq \frac{F_{\text{\emph{lin}}}(1/(2\alpha)) + \mathcal{O}(\delta)}{\alpha e^\gamma}  \frac{N}{W} B^{-K}L_K(F)
\end{align*}
\end{lemma}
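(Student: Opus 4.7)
The plan is to apply the linear sieve of Lemma \ref{linear} to sift $n+h_\ell$ by primes up to $Y = N^\alpha$, drawing on Proposition \ref{bv} as the level-of-distribution input, and then to evaluate the resulting main term by the standard Maynard--Tao computation. First I would expand the square in $\nu_{\HH,j,\ell}(n)$ and interchange the order of summation to obtain
\begin{align*}
S_1 &= \sum_{\mathbf{d}, \mathbf{e}} \lambda_{\mathbf{d}} \lambda_{\mathbf{e}} \, \Sigma(\mathbf{d}, \mathbf{e}), \\
\Sigma(\mathbf{d}, \mathbf{e}) &= \sum_{\substack{N < n \leq 2N \\ n \equiv b\,(W) \\ [d_i,e_i] \mid n+h_i \, \forall i}} 1_{\PP}(n+h_j)\, 1_{(n+h_\ell, P(Y))=1},
\end{align*}
where $\mathbf{d} = (d_1,\ldots,d_K)$ and $\mathbf{e} = (e_1,\ldots,e_K)$ satisfy $d_j = d_\ell = e_j = e_\ell = 1$.

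Next, for each fixed $(\mathbf{d},\mathbf{e})$, I would apply Lemma \ref{linear} with base sequence
$$a_n = 1_{\PP}(n+h_j) \cdot 1_{n\equiv b\,(W)} \cdot 1_{N < n \leq 2N} \cdot \prod_i 1_{[d_i,e_i] \mid n+h_i}$$
and sifting variable $n+h_\ell$. For squarefree $d$ coprime to $Q := W \prod_i [d_i, e_i]$, the Chinese Remainder Theorem confines $n+h_j$ to a single residue class modulo $Qd$, so by the prime number theorem in APs the density $g$ satisfies $g(p) = 1/(p-1)$ for primes $p$ coprime to $Q(h_\ell - h_j)$ and $g(p)=0$ otherwise; this density fulfils the Mertens-type assumption of Lemma \ref{linear} by standard Euler-product estimates.

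The level of distribution is the crux of the argument: since $W < N^\epsilon$ and $\prod_i [d_i, e_i] \leq N^{2\delta}$ by (\ref{l2}), we have $Qd < N^{1/2-\delta'}$ as long as $d \leq D := N^{1/2 - \mathcal{O}(\delta+\epsilon)}$, producing $s = \log D/\log Y = 1/(2\alpha) + \mathcal{O}(\delta+\epsilon)$. The remainder $r_d$ from Lemma \ref{linear} coincides with the Bombieri--Vinogradov discrepancy for primes in the AP modulo $Qd$; after rearranging so that $Qd$ is the outer modulus, Proposition \ref{bv} applied with $q_0 = W$ (the needed coprimality to $Z_{N^{4\epsilon}}$ is already enforced by (\ref{l1})) sums the errors to $\mathcal{O}(N/(\phi(W) \log^C N))$, which is negligible relative to the anticipated main term.

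The main term of Lemma \ref{linear} is $(F_{\text{\emph{lin}}}(1/(2\alpha)) + \mathcal{O}(\delta)) \cdot \frac{N}{\phi(Q)\log N} \prod_{p \leq Y}(1-g(p))$. Mertens' theorem converts the sieve product into $\frac{e^{-\gamma}}{\alpha \log N}$ times a singular-series correction, which merges with $1/\phi(Q)$ in the standard Maynard--Tao evaluation of $\sum_{\mathbf{d},\mathbf{e}} \lambda_{\mathbf{d}}\lambda_{\mathbf{e}}/\phi(Q)$ under the fixed-index restriction $d_j = d_\ell = e_j = e_\ell = 1$ to produce $B^{-K} L_K(F)$ up to the normalization $N/W$. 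Collecting all factors yields the claimed bound. The main technical obstacle is keeping the linear-sieve remainder, summed against $|\lambda_{\mathbf{d}}\lambda_{\mathbf{e}}|$, under control, but this is precisely the purpose of the smoothness condition (\ref{smooth}) and the coprimality to $Z_{N^{4\epsilon}}$ built into (\ref{l1}); Proposition \ref{bv} slots in cleanly, and no essentially new difficulty arises beyond what is already treated in the Selberg-sieve analogue of \cite{BFM}.
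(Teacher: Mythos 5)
Your proof has a genuine gap in the order in which you perform the two key operations, namely expanding the square in $\nu_{\HH,j,\ell}(n)$ and applying the linear sieve. You propose to expand first, writing $S_1 = \sum_{\mathbf{d},\mathbf{e}} \lambda_{\mathbf{d}}\lambda_{\mathbf{e}}\,\Sigma(\mathbf{d},\mathbf{e})$, and then to apply Lemma \ref{linear} to each $\Sigma(\mathbf{d},\mathbf{e})$ separately. But the coefficients $\lambda_{\mathbf{d}}\lambda_{\mathbf{e}}$ are sign-indefinite, and the linear sieve supplies only a one-sided \emph{upper bound} $\Sigma(\mathbf{d},\mathbf{e}) \leq (F_{\text{lin}}(s)+O(\delta))\,M(\mathbf{d},\mathbf{e}) + R(\mathbf{d},\mathbf{e})$, not an asymptotic equality. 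Multiplying a one-sided upper bound by a negative coefficient reverses it, so summing over all $(\mathbf{d},\mathbf{e})$ does not produce an upper bound for $S_1$; one would instead be forced to use the lower-bound function $f_{\text{lin}}$ on the negatively weighted terms, and the resulting mixture does not collapse to $F_{\text{lin}}(1/(2\alpha))$ times the Maynard--Tao main term. Worse, the $O(\delta)$-errors and the remainders would then be summed against $\sum|\lambda_{\mathbf{d}}\lambda_{\mathbf{e}}|\,M(\mathbf{d},\mathbf{e})$ rather than against the cancellative sum $\sum\lambda_{\mathbf{d}}\lambda_{\mathbf{e}}\,M(\mathbf{d},\mathbf{e})$, and the former is of a much larger order of magnitude, which would swamp the main term.

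The paper avoids this by keeping $\nu_{\HH,j,\ell}(n)$ intact as a (non-negative) square and applying Lemma \ref{linear} to the whole non-negative weighted sequence $a_n = 1_\PP(n+h_j)\,1_{n\equiv b\,(W)}\,\nu_{\HH,j,\ell}(n)$, with density $g(p)=1/(p-1)$ for $p\nmid WZ_{N^{4\epsilon}}$ and $g(p)=0$ otherwise. The relevant total mass $\sum_n a_n$ is evaluated asymptotically as $(1+o(1))\tfrac{N}{W}B^{-K+1}L_K(F)$ by the standard Maynard--Tao computation of \cite[Lemma 4.6]{BFM}, and Mertens' theorem turns the sieve product $\prod_{p\le Y}(1-g(p))$ into $\tfrac{W}{\phi(W)e^\gamma\log Y}$, yielding the stated main term. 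The square in $\nu$ is expanded only inside the error term $\sum_{d\le D}|r_d|$, where the absolute value lets one apply the trivial bound $|\lambda_{\mathbf{d}}\lambda_{\mathbf{e}}|\ll 1$, Cauchy--Schwarz, and Proposition \ref{bv} safely. Your use of Proposition \ref{bv} for the remainder, and the Mertens/Maynard--Tao evaluation of the main term, are correct in spirit, but they must come after (not before) the sieve step, exactly because the sieve step only works on a non-negative sequence.
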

\begin{proof}
Define $r_d$ by the equation
\begin{align} \label{rd}
 \sum_{\substack{N < n \leq 2N \\ n \equiv -h_\ell \, (d)}} 1_{\PP}(n+h_j) 1_{n \equiv b \, (W)}\nu_{\HH,j,\ell} (n)  = g(d)  \sum_{\substack{N < n \leq 2N}} 1_{\PP}(n+h_j) 1_{n \equiv b \, (W)} \nu_{\HH,j,\ell} (n)  +r_d,
\end{align}
where $g(d)$ is a multiplicative function, supported on square-free integers, defined by
\begin{align*}
g(p) := \begin{cases} \frac{1}{p-1}, & \text{if} \,  p \, \nmid W Z_{N^{4\epsilon}} \\
0, & \text{if} \,  p \, \mid W Z_{N^{4\epsilon}}.
\end{cases}
\end{align*}
We note that by the same argument as in the proof of \cite[Lemma 4.6]{BFM} (recall that $d_j=d_\ell=1$ in $\nu_{\HH,j,\ell} (n) $), the sum on the right-hand side in (\ref{rd}) is
\begin{align} \nonumber
\sum_{\substack{N < n \leq 2N}} 1_{\PP}(n+h_j) 1_{n \equiv b \, (W)} \nu_{\HH,j,\ell} (n)  &= (1+o(1)) \frac{N }{\phi(W) \log N}  B^{-K+2}L_K(F) \\ \label{main}
&= (1+o(1)) \frac{N}{W} B^{-K+1}L_K(F).
\end{align}
 (to show this we just expand the square in $\nu_{\HH,j,\ell} (n)$, swap the order of summation, and use the Proposition \ref{bv} with moduli $[d_1,d_1'] \cdots [d_K,d_K']W \leq N^{3\delta}$ similarly as in \cite[Lemma 4.6]{BFM}). Hence, by the upper bound of the linear sieve  (Lemma \ref{linear} with level of distribution $D=N^{1/2-4\delta}$, sifting up to $Y=N^\alpha$) we get
\begin{align*}
S_1 \leq (F_{\text{lin}}(1/(2\alpha)) + \mathcal{O}(\delta))\bigg( \prod_{p \leq Y} (1- g(p)) \bigg) \frac{N}{W} B^{-K+1 }L_K(F) + \sum_{\substack {d \leq N^{1/2-4\delta} \\ d \, \, \text{squarefree}} } |r_d|.
\end{align*}
By Merten's Theorem
\begin{align*}
 \prod_{p \leq Y} (1- g(p)) &= \prod_{W < p < Y}\bigg( 1- \frac{1}{p-1} \bigg) = \prod_{W < p < Y}\bigg( 1- \frac{1+ \mathcal{O}(1/p)}{p} \bigg) \\
 & = (1 + o(1))\frac{W}{\phi(W)}  \prod_{ p < Y}\bigg( 1- \frac{1}{p} \bigg) = (1+o(1))\frac{W}{\phi(W) e^{\gamma} \log Y} ,
\end{align*}
so that
\begin{align*}
S_1 \leq \frac{F_{\text{lin}}(1/(2\alpha)) + \mathcal{O}(\delta)}{\alpha e^\gamma}  \frac{N}{W} B^{-K}L_K(F)+ \sum_{\substack {d \leq N^{1/2-4\delta} \\ d \, \, \text{squarefree}} } |r_d|.
\end{align*}
For the error term we expand the square in $\nu_{\HH,j,\ell} (n) $ and swap the order of summation to get
\begin{align*}
r_d &=  \sum_{\substack{N < n \leq 2N \\ n \equiv -h_\ell \, (d)}} 1_{\PP}(n+h_j) 1_{n \equiv b \, (W)} \nu_{\HH,j,\ell} (n)  - g(d)  \sum_{\substack{N < n \leq 2N}} 1_{\PP}(n+h_j) 1_{n \equiv b \, (W)} \nu_{\HH,j,\ell} (n)  \\
&= \sum_{\substack{d_1, \dots, d_K \\ d'_1,\dots d_K' \\ d_j=d_j'=d_\ell=d_\ell'=1}} \lambda_{d_1,\dots, d_K}  \lambda_{d_1',\dots, d_K'} \bigg( \sum_{\substack{N < n \leq 2N \\ n \equiv b \, (W) \\n \equiv -h_\ell \, (d) \\ n \equiv -h_i \, ([d_i,d_i'])}} 1_{\PP}(n+h_j) - g(d) \sum_{\substack{N < n \leq 2N \\ n \equiv b \, (W) \\ n \equiv -h_i \, ([d_i,d_i'])}} 1_{\PP}(n+h_j) \bigg).
\end{align*}
Similarly as in the proof of \cite[Lemma 4.6]{BFM}, we note that since $h'-h$ is $\epsilon \log N$-smooth for all distinct $h,h' \in \HH$ by (\ref{smooth}), and by the support conditions (\ref{l1}), (\ref{l2}) of $\lambda_{d_1,\dots,d_k},$ we may assume that $d,$ $[d_1,d_1'],\dots, [d_K,d_K'],$ $W Z_{N^{4\epsilon}}$ are pairwise coprime. In that case we have $g(d)=1/\phi(d)$,
\begin{align*}
\sum_{\substack{N < n \leq 2N \\ n \equiv b \, (W) \\n \equiv -h_\ell \, (d) \\ n \equiv -h_i \, ([d_i,d_i'])}} 1_{\PP}(n+h_j) = \frac{\pi(2N+h_j) - \pi (N+h_j)}{\phi(d) \phi(W) \prod_{i=1}^K \phi([d_i,d_i'])} + \mathcal{O} \bigg( E(N, d [d_1,d_1'] \cdots [d_K, d_K'] W) \bigg), 
\end{align*}
and
\begin{align*}
 g(d) \hspace{-5pt}\sum_{\substack{N < n \leq 2N \\ n \equiv b \, (W) \\ n \equiv -h_i \, ([d_i,d_i'])}} 1_{\PP}(n+h_j)   = \frac{\pi(2N+h_j) - \pi (N+h_j)}{\phi(d) \phi(W) \prod_{i=1}^K \phi([d_i,d_i'])} + \mathcal{O} \bigg(  E(N, [d_1,d_1'] \cdots [d_K, d_K'] W) \bigg)
\end{align*}
where
\begin{align*}
E(N,q) = \max_{(a,q)=1 } \bigg |  \pi (2N+h_j; q, a ) - \pi(N+h_j;q,a) - \frac{\pi(2N+h_j) - \pi (N+h_j)}{\phi(q)} \bigg |,
\end{align*}
if $(q, Z_{N^{4\epsilon}}) =1 $ and we set $E(N,q) = 0$ if $(q, Z_{N^{4\epsilon}}) >1.$

Hence, by the triangle inequality
\begin{align*}
 \sum_{\substack {d \leq N^{1/2-4\delta} \\ d \,\, \text{squarefree}} } |r_d| \, \ll \sum_{\substack {d \leq N^{1/2-4\delta} \\ d \,\, \text{squarefree} \\ (d,W)=1} } \sum_{\substack{d_1, \dots, d_K \\ d'_1,\dots d_K' \\ d_j=d_j'=d_\ell=d_\ell'=1 }} | \lambda_{d_1,\dots, d_K}  \lambda_{d_1',\dots, d_K'} | E(N, d [d_1,d_1'] \cdots [d_K, d_K'] W) \\
 + \sum_{\substack {d \leq N^{1/2-4\delta} \\ d \,\, \text{squarefree} \\ (d,W)=1}}  \frac{1}{\phi(d)}\sum_{\substack{d_1, \dots, d_K \\ d'_1,\dots d_K' \\ d_j=d_j'=d_\ell=d_\ell'=1}} | \lambda_{d_1,\dots, d_K}  \lambda_{d_1',\dots, d_K'} |   E(N, [d_1,d_1'] \cdots [d_K, d_K'] W)
\end{align*}
The second sum on the right-hand side is bounded by $\log N$ times the first sum. We have the trivial bounds $|\lambda_{d_1,\dots, d_K}| \, \ll 1$ and $E(N,q) \ll 1 +N/\phi(q).$  Hence, using Cauchy-Schwarz and Proposition \ref{bv}  the first sum is bounded by
\begin{align*}
\sum_{\substack{q \leq N^{1/2-2\delta} \\ (q, W Z_{N^{4\epsilon}}) =1 }} & \tau_{3K}(q) E(N, qW) \\
&\leq  \bigg( \sum_{\substack{q \leq N^{1/2-2\delta} \\ (q, W Z_{N^{4\epsilon}}) =1 }}\tau_{3K}(q)^2 (1+N /\phi(qW))\bigg)^{1/2}\bigg( \sum_{\substack{q \leq N^{1/2-2\delta} \\ (q, W Z_{N^{4\epsilon}}) =1 }}  E(N, qW) \bigg)^{1/2} \\
& \, \ll_{K,C} \frac{N}{W \log^C N},
\end{align*}
which is sufficient.
\end{proof}

\begin{lemma} \label{s2} We have
\begin{align*}
S_2 \geq  \frac{1-\mathcal{O}(\delta)}{\alpha e^{\gamma}} \int_\alpha ^\beta f_{\text{\emph{lin}}} \bigg( \frac{1/2 -t}{\alpha} \bigg) \frac{dt}{t} \frac{N}{W} B^{-K}L_K(F).
\end{align*}
\end{lemma}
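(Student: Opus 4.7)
The plan is to apply the lower bound linear sieve (Lemma \ref{linear}) separately for each prime $p \in (Y, Z]$, sifting the variable $n + h_\ell$ by primes $\leq Y$, and then convert the resulting sum over $p$ into an integral via the prime number theorem. This parallels the proof of Lemma \ref{s1}, with the additional fixed congruence $n \equiv -h_\ell \,(p)$ imposed inside.

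Write $t_p = \log p / \log N$ and let $T_p$ denote the inner sum in $S_2$ for fixed $p$. Set
\begin{align*}
X_p := \sum_{\substack{N < n \leq 2N \\ n \equiv b \,(W) \\ n \equiv -h_\ell \,(p)}} 1_{\PP}(n+h_j)\, \nu_{\HH,j,\ell}(n).
\end{align*}
Expanding $\nu_{\HH,j,\ell}$ and applying Proposition \ref{bv} to moduli $p\, W [d_1,d_1'] \cdots [d_K,d_K']$, which remain below $N^{1/2-\delta}$ since $p \leq N^\beta < N^{1/4}$, yields the analogue of (\ref{main}):
\begin{align*}
X_p = (1+o(1))\, \phi(p)^{-1} \cdot \frac{N}{W}\, B^{-K+1} L_K(F).
\end{align*}
Define $g_p$ multiplicatively by $g_p(p')=1/(p'-1)$ for primes $p' \nmid WpZ_{N^{4\epsilon}}$ and $g_p(p')=0$ otherwise. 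Since $p > Y$, $p$ itself is never a sieve prime, and $g_p$ is a linear sieve density whose $L$-condition holds uniformly in $p$. The level of distribution is $D_p := N^{1/2-4\delta}/p$, giving sieve parameter $s_p = \log D_p / \log Y = (1/2 - t_p)/\alpha - O(\delta)$. Combined with the Mertens computation of Lemma \ref{s1}, which gives $\prod_{p' \leq Y}(1-g_p(p')) = (1+o(1)) W/(\phi(W) e^\gamma \alpha \log N)$, the lower bound of Lemma \ref{linear} produces
\begin{align*}
T_p \geq \frac{f_{\text{lin}}((1/2-t_p)/\alpha) - O(\delta)}{\alpha e^\gamma \phi(p)} \cdot \frac{N}{W}\, B^{-K} L_K(F) - \mathcal{E}_p,
\end{align*}
where $\mathcal{E}_p$ denotes the sieve remainder term arising from divisors $d \leq D_p$ of $n+h_\ell$.

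Summing over $p$ and applying the prime number theorem,
\begin{align*}
\sum_{Y < p \leq Z} \frac{f_{\text{lin}}((1/2-t_p)/\alpha)}{\phi(p)} = (1+o(1))\int_\alpha^\beta f_{\text{lin}}\bigg(\frac{1/2-t}{\alpha}\bigg)\, \frac{dt}{t},
\end{align*}
which delivers the claimed lower bound for $S_2$. The main technical obstacle is to ensure the aggregate remainder $\sum_{Y<p\leq Z} \mathcal{E}_p$ is $o((N/W) B^{-K} L_K(F))$; after expanding the square in $\nu_{\HH,j,\ell}$ this reduces to a sum over moduli of the shape $q = p\, d\, W\, [d_1,d_1'] \cdots [d_K,d_K'] \leq N^{1/2-\delta}$, which is handled by Cauchy--Schwarz and Proposition \ref{bv} exactly as in the proof of Lemma \ref{s1}, splitting the $p$-range dyadically so that uniformity in $p$ is automatic.
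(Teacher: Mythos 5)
Your proposal follows essentially the same route as the paper: decompose $S_2 = \sum_{Y<p\leq Z} S_{2,p}$, apply the lower-bound linear sieve to each $S_{2,p}$ with level $D_p = N^{1/2-4\delta}/p$ and sifting parameter $s_p = (1/2 - \log p/\log N)/\alpha$, compute the singular product via Mertens, convert the sum over $p$ into an integral by the prime number theorem, and absorb all remainders into a single Cauchy--Schwarz/Bombieri--Vinogradov estimate over moduli $p\,d\,W\,[d_1,d_1']\cdots[d_K,d_K']$. One small inaccuracy worth flagging: the intermediate claim that $X_p = (1+o(1))\phi(p)^{-1}\,(N/W)\,B^{-K+1}L_K(F)$ does not follow pointwise in $p$ from Proposition~\ref{bv}, since that result requires $q_0 < N^{\epsilon}$ with $P^+(q_0) < N^{\epsilon/\log_2 N}$, and $p > Y = N^{\alpha}$ violates both; the paper sidesteps this by defining the sieve remainders $r_{dp}$ relative to $(g(d)/(p-1))$ times the global main term (\ref{main}) (which is known pointwise), so that the $d=1$ remainder absorbs the fluctuation of $X_p$ and the full remainder sum is controlled only on average over both $d$ and $p$. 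Your final error analysis (sum over $p$, Cauchy--Schwarz, Proposition~\ref{bv}) does exactly this averaging and therefore saves the argument, but the pointwise statement about $X_p$ as written should be dropped or rephrased as an on-average asymptotic.
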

\begin{proof}
Set
\begin{align*}
S_{2,p} := \sum_{\substack{N < n \leq 2N \\ n \equiv b \, (W) \\ p | n+h_\ell}} 1_{\PP}(n+h_j)1_{(n+h_\ell,P(Y))=1}\nu_{\HH,j,\ell} (n),
\end{align*}
so that $S_2 = \sum_{Y < p \leq Z} S_{2,p}.$ We will apply the lower bound of the linear sieve to each of the sums $S_{2,p}:$ for $(d,p)=1,$ let $r_{dp}$ be defined by
\begin{align*}
 \sum_{\substack{N < n \leq 2N \\ n \equiv b \, (W) \\ p | n+h_\ell \\ n \equiv - h_\ell \, (d)}} 1_{\PP}(n+h_j) 1_{n \equiv b \, (W)}\nu_{\HH,j,\ell} (n)  = \frac{g(d)}{p-1}  \sum_{\substack{N < n \leq 2N}} 1_{\PP}(n+h_j) 1_{n \equiv b \, (W)} \nu_{\HH,j,\ell} (n)  +r_{dp},
\end{align*}
where $g(d)$ is as in the proof of Lemma \ref{s1}, that is, a multiplicative function, supported on square-free integers, defined by
\begin{align*}
g(q) := \begin{cases} \frac{1}{q-1}, & \text{if} \,  q \, \nmid W Z_{N^{4\epsilon}} \\
0, & \text{if} \,  q \, \mid W Z_{N^{4\epsilon}}.
\end{cases}
\end{align*}
Applying the lower bound of the linear sieve (Lemma \ref{linear} with level of distribution $D=N^{1/2-4\delta}/p$ and shifting up to $Y=N^\alpha$), using (\ref{main}) and Merten's Theorem similarly as in the proof of Lemma \ref{s1}, we find that
\begin{align*}
S_{2,p} & \geq \bigg( f_{\text{lin}} \bigg( \frac{\log N^{1/2}/p}{\log Y} \bigg) - \mathcal{O}(\delta)\bigg)\frac{1}{p-1} \bigg( \prod_{q \leq Y} (1- g(q)) \bigg) \frac{N}{W} B^{-K+1}L_K(F) - \sum_{\substack {d \leq N^{1/2-4\delta}/p \\ d \, \, \text{squarefree}} } |r_{dp}| \\
& \geq  \frac{1}{\alpha e^\gamma} \bigg( f_{\text{lin}} \bigg( \frac{\log N^{1/2}/p}{\log Y} \bigg) - \mathcal{O}(\delta)\bigg)\frac{1}{ p} \frac{N}{W} B^{-K}L_K(F) - \sum_{\substack {d \leq N^{1/2-4\delta}/p \\ d \, \, \text{squarefree}} } |r_{dp}|.
\end{align*}
Summing over $p$ we get, by a similar argument as in the proof of Lemma \ref{s1}, a sufficient bound for the error term
\begin{align*}
\sum_{Y < p \leq Z} \sum_{\substack {d \leq N^{1/2-4\delta}/p \\ d \, \, \text{squarefree}} } |r_{dp}| \, \ll_{C,K} \frac{N}{W\log^C N}.
\end{align*}
Hence, we have
\begin{align*}
S_{2} & \geq \frac{1- \mathcal{O}(\delta)}{\alpha e^\gamma} \bigg( \sum_{Y< p \leq Z} \frac{1}{p} f_{\text{lin}} \bigg( \frac{\log N^{1/2}/p}{\log Y} \bigg) \bigg) \frac{N}{W} B^{-K}L_K(F) \\
 & \geq \frac{1- \mathcal{O}(\delta)}{\alpha e^\gamma} \bigg( \int_{Y < z \leq Z}  f_{\text{lin}} \bigg( \frac{\log N^{1/2}/z}{\log Y} \bigg) \frac{dz}{z \log z} \bigg)  \frac{N}{W} B^{-K}L_K(F) \\
& \geq  \frac{1-\mathcal{O}(\delta)}{\alpha e^{\gamma}} \int_\alpha ^\beta f_{\text{lin}} \bigg( \frac{1/2 -t}{\alpha} \bigg) \frac{dt}{t} \frac{N}{W} B^{-K}L_K(F)
\end{align*}
by the change of variables $z=N^t$.
\end{proof}

For the next Lemma we need the Buchstab function, defined as the continuous solution to the delay-differential equation
\begin{align*}
\begin{cases} s \omega(s) = 1, & \text{if} \, \, 1 \leq s \leq 2,\\
(s \omega (s))' = \omega(s-1), & \text{if} \, \, s > 2.
\end{cases}
\end{align*}
Then by \cite[Lemma 12.1]{FI} for any $N^\epsilon < z < N$ we have
\begin{align} \label{buchstabfun}
\sum_{N< n \leq 2N} 1_{(n,P(z))=1} = (1+o(1)) \omega(\log N / \log z) \frac{N}{\log z}, \quad \quad N \to \infty.
\end{align}

\begin{lemma} \label{s3}
We have
\begin{align*}
S_3 \leq (4 + \mathcal{O}(\delta))\int_{\alpha < u_1 < u_2 < u_3 < \beta} \omega \bigg(\frac{1-u_1-u_2-u_3}{u_2} \bigg)\frac{du_1 d u_2 du_3 }{u_1 u_2^2 u_3} \frac{N}{W} B^{-K}L_K(F).
\end{align*}
\end{lemma}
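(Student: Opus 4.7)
Following the structure of the proofs of Lemmas \ref{s1} and \ref{s2}, I would interchange summations and reduce to estimating, for each fixed triple $(p,q,r)$ with $Y<p<q<r\leq Z$, the inner sum
\begin{align*}
T(p,q,r) := \sum_{\substack{N<n\leq 2N\\ n\equiv b\,(W)\\ pqr\mid n+h_\ell \\ ((n+h_\ell)/(pqr),P(q))=1}} 1_{\PP}(n+h_j)\,\nu_{\HH,j,\ell}(n).
\end{align*}
Setting $s=(n+h_\ell)/(pqr)$ (which satisfies $s\gg N^{1/4}$ since $\beta<1/4$), I would apply the upper bound linear sieve (Lemma \ref{linear}) to the sifting condition $(s,P(q))=1$ with sifting variable $z=q$, taking as the ambient sequence $a_n := 1_{\PP}(n+h_j)\,\nu_{\HH,j,\ell}(n)\, 1_{n\equiv b\,(W)}\, 1_{pqr\mid n+h_\ell}$. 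By the Maynard-Tao computation of (\ref{main}) adapted with the extra modulus $pqr$ (exactly as for a single prime in the proof of Lemma \ref{s2}), the base main term is $(1+o(1))\phi(pqr)^{-1}\cdot (N/W)\, B^{-K+1} L_K(F)$, and via Mertens the sieve density factor evaluates to $\prod_{p'\leq q}(1-g(p'))\sim W/(\phi(W)e^\gamma\log q)$.

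The level of distribution for this sieve, after removing the $W$, $pqr$, and Maynard-Tao moduli of size at most $N^{2\delta}$, is approximately $N^{1/2-O(\delta)}/(W\cdot pqr)$; the remainder $\sum_d|r_d|$ is handled analogously to the previous lemmas, but now using Proposition \ref{bv2} rather than Proposition \ref{bv}. This almost-prime strengthening is essential because $pqr$ alone can be as large as $N^{3\beta}$ with $\beta$ close to $1/4$, so the combined modulus $pqr\cdot W$ already exceeds the classical Bombieri-Vinogradov level. After a dyadic decomposition of $p\asymp N^{u_1}$, Proposition \ref{bv2} applies with $A_1:=p$, $f(m) = 1_{\PP}(m)(\log m)\, 1_{m\asymp p}$, and $g(k)$ encoding the residual factorization $k = qrs$ (which satisfies $P^-(k) > q > N^\alpha$ and $k \asymp N/p = N/A_1$, matching the support conditions).

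Assembling the main term, the relation between $F_{\text{lin}}$ and the Buchstab function $\omega$ on the relevant parameter range, combined with the identity $B^{-K+1}/\log N=B^{-K}\phi(W)/W$ absorbing the Mertens factor $W/(\phi(W)e^\gamma)$, should yield
\begin{align*}
T(p,q,r) \leq (4+O(\delta))\cdot \frac{\omega((1-u_1-u_2-u_3)/u_2)}{pqr\log q}\cdot \frac{N}{W}\,B^{-K}L_K(F),
\end{align*}
the absolute constant $4$ emerging from the standard Selberg/linear-sieve upper-bound losses. Summing over $(p,q,r)$ and converting $\sum_{p\asymp N^u}1/p\to du/u$ via the prime number theorem then produces the stated triple integral, the $1/u_2^2$ in the denominator arising from the factor $1/\log q = 1/(u_2\log N)$ combined with the $du_2/u_2$ from the Mertens-type integration in the middle variable. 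The main obstacle is the application of Proposition \ref{bv2} in the regime where the arithmetic modulus $pqr\cdot W$ exceeds $N^{1/2}$, together with the careful bookkeeping of Mertens products and sieve constants to obtain precisely $4+O(\delta)$ rather than a strictly larger constant.
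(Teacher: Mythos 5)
Your approach does not follow the paper's and, more importantly, cannot work as stated. The crucial ingredient that you are missing is \emph{switching}: the paper rewrites $S_3 = \sum_n 1_\PP(n+h_j)\, a_{n+h_\ell}\, \nu_{\HH,j,\ell}(n)$ (with $a_n$ counting the $pqrs$ decompositions) and then applies an upper bound sieve to the \emph{other} shift $n+h_j$, with level $N^{1/4-2\delta}$, while the rough cofactor $s$ is counted \emph{exactly} via (\ref{buchstabfun}). The $4+\mathcal{O}(\delta)$ comes from the Selberg (or linear) sieve applied to $n+h_j$, and the Buchstab function $\omega$ comes from this exact count of $s$.

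Your direct route, namely fixing $(p,q,r)$ and applying the upper bound linear sieve to the condition $(s,P(q))=1$ with sieving parameter $z=q$, founders on the level of distribution. For the error terms you need the modulus $pqr\cdot d\cdot W\cdot\prod_i[d_i,d_i']$ to stay below $N^{1/2-\delta}$, so the usable level for $d$ is at most $D\approx N^{1/2-O(\delta)}/(pqr)$. But with $Y=N^\alpha=N^{1/7}$ one has $pq^2r>N^{4/7}>N^{1/2}$, hence $D<q=z$ always, and when $pqr>N^{1/2}$ (which happens for most of the range, since $pqr$ can be as large as $N^{9/14}$) the level is less than $1$. The linear sieve upper bound $F_{\mathrm{lin}}(\log D/\log z)$ is simply not available; even the trivial fallback of sieving only up to $D<q$ gives a factor $1/\log D$ which is worse than the required $1/\log q$ and degenerates completely when $D\le1$. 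You flag this as ``the main obstacle'' and propose Proposition \ref{bv2} to overcome it, but Proposition \ref{bv2} does not raise the level of distribution beyond $N^{1/2-\delta}$: it only replaces $\Lambda$ by an almost-prime weight $\Lambda_0=f*g$. It helps precisely \emph{after} switching, because then the quantity that must equidistribute in progressions is the almost-prime sequence $a_n$ (which is of $\Lambda_0$ type) with moduli $\le N^{1/2-\delta}$, not the prime counting function with moduli exceeding $N^{1/2}$. A related inconsistency: your claimed per-triple bound contains the Buchstab function $\omega$, but a linear sieve upper bound would instead produce $F_{\mathrm{lin}}$; the appearance of $\omega$ here is a sign that the $s$-count must be handled asymptotically, not by an upper bound sieve, and that is only possible once you have switched so that $s$ runs freely over an interval.
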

\begin{proof}
Here we apply the switching, to sieve out the prime divisors of $n+h_j$ rather than $n+h_\ell$; define
\begin{align*}
a_n := \sum_{Y < p< q < r \leq Z} \sum_{(s,P(q)) = 1} 1_{n=pqrs}
\end{align*}
so that
\begin{align*}
S_3 = \sum_{\substack{N < n \leq 2N \\ n \equiv b \, (W)}} 1_\PP(n+h_j) a_{n+h_\ell} \nu_{\HH,j,\ell} (n).
\end{align*}
We use a similar Selberg upper bound sieve as in \cite[Lemma 4.6]{BFM} (we could just as well use the linear sieve upper bound as in the above but the argument is slightly simpler this way); let $G:[0, \infty) \to \R$ be a smooth function supported on $[0,1/4-2\delta]$ with $G(0)=1.$ Then
\begin{align*}
S_3 &\leq \sum_{\substack{N < n \leq 2N \\ n \equiv b \, (W)}} a_{n+h_\ell} \bigg( \sum_{e \,| n+ h_j}\mu(e) G \bigg( \frac{\log e}{\log N}\bigg)\bigg)^2\nu_{\HH,j,\ell} (n) \\
&\leq  \sum_{\substack{N < n \leq 2N \\ n \equiv b \, (W)}} a_{n+h_\ell} \bigg( \sum_{\substack{e \,| n+ h_j\\ (e, Z_{N^{4\epsilon}})=1}}\mu(e) G \bigg( \frac{\log e}{\log N}\bigg)\bigg)^2  \bigg( \sum_{\substack{d_1, \dots, d_K \\ d_i | n+h_i \\ d_j=d_\ell=1}} \lambda_{d_1, \dots, d_K} \bigg)^2.
\end{align*}
We then expand the squares and rearrange the sum to get
\begin{align*}
 \sum_{\substack{d_1, \dots, d_K \\ d'_1,\dots d_K' \\ d_j=d_j'=d_\ell=d_\ell'=1 }}  \lambda_{d_1,\dots, d_K}  \lambda_{d_1',\dots, d_K'}   \sum_{\substack{e,e' \\ (ee', Z_{N^{4\epsilon}})=1}} \mu(e) \mu(e')  G \bigg( \frac{\log e}{\log N}\bigg) G \bigg( \frac{\log e'}{\log N}\bigg)  \sum_{\substack{N < n \leq 2N \\ n \equiv b \, (W) \\ [d_i,d_i'] | n+h_i \\ [e,e'] | n+h_j }} a_{n+h_\ell}
\end{align*}
In the innermost sum, we may again assume that  $[d_1,d_1'],\dots, [d_K,d_K'],$ $[e,e'],$ $W Z_{N^{4\epsilon}}$  are pairwise coprime, and insert the estimates (for $d=[d_1,d'_1]\cdots [d_K,d_K'][e,e'] W$)
\begin{align*}
 \sum_{\substack{N < n \leq 2N \\ n \equiv a \, (d) }} a_{n+h_\ell} = \frac{1}{\phi(d)} \sum_{\substack{N < n \leq 2N  }} a_{n+h_\ell}  + \tilde{r}_d.
\end{align*}
  By essentially the same argument as in the proof of \cite[Lemma 4.6 (iii)]{BFM}, choosing the function $G$ optimally gives 
\begin{align} \label{third}
S_3 &\leq (4+ \mathcal{O}(\delta)) \frac{\log N}{N} \bigg(\sum_{N < n \leq 2N} a_n \bigg) \frac{N}{W} B^{-K}L_K(F) + \mathcal{O}(R),
\end{align}
where 
\begin{align*}
R =    \sum_{\substack{d_1, \dots, d_K \\ d'_1,\dots d_K' \\ d_j=d_j'=d_\ell=d_\ell'=1 }} | \lambda_{d_1,\dots, d_K}  \lambda_{d_1',\dots, d_K'} | \sum_{\substack{e,e' \leq N^{1/4-2\delta} \\ (ee',Z_{N^{4\epsilon}}) = 1}}E_0(N, [d_1,d_1'] \cdots [d_K, d_K'] [e,e'] W)
\end{align*}
with
\begin{align*}
E_0(N,d) := \max_{(a,d)=1} \bigg | \sum_{\substack{N +h_\ell< n \leq 2N+h_\ell \\ n \equiv a \, (d)}} a_n -\frac{1}{\phi(d)}  \sum_{\substack{N+h_\ell < n \leq 2N+h_\ell }} a_n  \bigg |.
\end{align*}
Note that the condition $e,e' \leq N^{1/4-2\delta}$ comes from the support restriction of the function $G$.  Using Cauchy-Schwarz and the trivial bound $ | \lambda_{d_1,\dots, d_K}| \ll 1$ similarly as in the proof of Lemma \ref{s1}, the error term $R$ has a sufficient bound if we can show that
\begin{align*}
\sum_{\substack{d \leq N^{1/2-2\delta} \\ (d, W Z_{N^{4\epsilon}}) =1 }} |E_0(N,dW)| \, \ll_C \frac{N}{W \log^C N}.
\end{align*}
To show this we use finer-than-dyadic decomposition to write $a_n 1_{N +h_\ell < n \leq 2N + h_\ell}$ as a sum of terms of the form
\begin{align*}
\sum_{\substack{Y < p < q < r \leq Z \\ p \in I_1,  \, \, q \in I_2}}\, \, \sum_{\substack{ (N+h_\ell)/(pqr) < s \leq (2N+h_\ell)/(pqr) \\ (s,P(q))=1}} 1_{n=pqrs},
\end{align*}
where each $I_j$ is of the form $(A_j, \lambda A_j ]$ for $\lambda= 1 + \log^{-2C} N$. We remove the cross-conditions $Y < p < q;$ this causes an error bounded using triangle inequality by the sum of (\ref{error1}) and (\ref{error2}), which are given by
\begin{align} \label{error1}
\sum_{\substack{d \leq N^{1/2-2\delta} \\ (d, W Z_{N^{4\epsilon}}) =1 }} &  \max_{(a,d)=1} \sum_{\substack{Y < p < q < r \leq Z \\ p \in [\lambda^{-2}Y, \lambda^2 Y] \cup [\lambda^{-2}q, \lambda^2 q] \\ (pq,d) =1 }} \, \, \sum_{\substack{s \asymp N/(pqr) \\ (s,(P(q)))=1 \\ rs \equiv a \overline{pq} \, (dW)}} 1   \, \\ \nonumber
& \ll   \sum_{\substack{d \leq N^{1/2-2\delta} \\ (d, W Z_{N^{4\epsilon}}) =1 }} \max_{(a,d)=1} \sum_{\substack{Y < p < q  \leq Z \\ p \in [\lambda^{-2}Y, \lambda^2 Y] \cup [\lambda^{-2}q, \lambda^2 q] \\ (pq,d) =1 }} \, \, \sum_{\substack{m \asymp N/(pq) \\ m \equiv a \overline{pq} \, (dW)}} 1  \ll_C \frac{N}{W \log^C N}
\end{align}
(since $m=rs \gg N/pq > N^{1/2}$ by using $\beta < 1/4$), and
\begin{align}\label{error2}
\sum_{\substack{d \leq N^{1/2-2\delta} \\ (d, W Z_{N^{4\epsilon}}) =1 }} \frac{1}{\phi(dW)} \sum_{\substack{Y < p < q < r \leq Z \\ p \in [\lambda^{-2}Y, \lambda^2 Y] \cup [\lambda^{-2}q, \lambda^2 q]}} \, \, \sum_{\substack{s \asymp N/(pqr) \\ (s,(P(q)))=1}} 1 \, \ll_C \frac{N}{W \log^C N},
\end{align}
which is sufficient. Similarly, if we replace the condition $N+h_\ell < pqrs \leq 2N+h_\ell$ by  $ (N+h_\ell)/(A_1qr) < s \leq (2N+h_\ell)/(A_1qr),$ then  we get a sufficient bound for the contribution of the part where $pqrs \notin (N+h_\ell,2N+h_\ell].$  Thus, we can replace $a_n 1_{N < n \leq 2N}$ by a sum of $\mathcal{O}(\log^{4C+2} N)$ functions of the form $(P\ast g)(n), $ where for $Y \ll A_1,  A_2 \ll Z$
\begin{align*}
P(m)= 1_\PP(m)1_{m \in (A_1,\lambda A_1]} \quad \text{and} \quad
g(n)= \sum_{\substack{ q < r \leq Z \\ q \in (A_2,\lambda A_2] }} \sum_{\substack{ (N+h_\ell)/(A_1qr) < s \leq (2N+h_\ell)/(A_1qr)\\ (s,P(q))=1}} 1_{n=qrs}.
\end{align*}
We can then replace $P(m)$ by $f(m)/\log A_1,$ where  $f(m) := P(m)\log m$; this is because for all $m \in (A_1,\lambda A_1]$ we have
\begin{align*}
\log m = \log A_1 + \mathcal{O}\bigg( \log^{-2C} N \bigg),
\end{align*}
so that the error term from this has a sufficient bound by trivial estimates. Finally, writing $f(m) = 1_\PP(m)(\log m )1_{m \leq \lambda A_1} - 1_\PP(m)(\log m) 1_{m \leq A_1}$ and using triangle inequality,  we obtain by Proposition \ref{bv2} that 
\begin{align*}
\sum_{\substack{d \leq N^{1/2-2\delta} \\ (d, W Z_{N^{4\epsilon}}) =1 }} |E_0(N,dW)| \, \ll_C \frac{N}{W \log^C N},
\end{align*}
which suffices by the previous remarks to bound the error term $R$ in (\ref{third}).

To compute the main term in (\ref{third}) we write by using (\ref{buchstabfun})
\begin{align*}
\sum_{N < n \leq 2N} a_n &= \sum_{Y < p< q < r \leq Z} \, \,\sum_{ \substack{ N/(pqr) < s \leq 2N/(pqr) \\ (s,P(q)) = 1}} 1 \\
& = (1+o(1)) N \sum_{Y < p< q < r \leq Z} \frac{\omega \bigg( \frac{\log(N/(pqr))}{\log q}\bigg)}{pqr \log q} \\
&= ( 1+o(1)) N \int_{Y<z_1 < z_2 < z_3 \leq Z} \omega \bigg( \frac{\log(N/(z_1z_2z_3))}{\log z_2}\bigg)\frac{dz_1 dz_2 dz_3}{z_1 z_2 z_3 (\log z_1)( \log^2 z_2) \log z_3} \\
&= ( 1+o(1)) \frac{N}{\log N} \int_{\alpha < u_1 < u_2 < u_3 < \beta} \omega \bigg(\frac{1-u_1-u_2-u_3}{u_2} \bigg)\frac{du_1 d u_2 du_3 }{u_1 u_2^2 u_3}
\end{align*}
after the change of variables $z_j=N^{u_j}.$
\end{proof}

\emph{Proof of Proposition \ref{pairs}.} Combining Lemmata \ref{chen}, \ref{s1}, \ref{s2} and \ref{s3} we obtain
\begin{align*}
S \leq (\Omega_1 - \Omega_2 + \Omega_3 + \mathcal{O}(\delta)) \frac{N}{W} B^{-K}L_K(F),
\end{align*}
where
\begin{align*}
\Omega_1 &= \frac{F_{\text{lin}}(1/(2\alpha))}{\alpha e^\gamma}, \quad \quad \quad \Omega_2 =  \frac{1}{2\alpha e^{\gamma}} \int_\alpha ^\beta f_{\text{lin}} \bigg( \frac{1/2 -t}{\alpha} \bigg) \frac{dt}{t}, \quad  \quad \text{and} \\
\Omega_3 &= 2 \int_{\alpha < u_1 < u_2 < u_3 < \beta} \omega \bigg(\frac{1-u_1-u_2-u_3}{u_2} \bigg)\frac{du_1 d u_2 du_3 }{u_1 u_2^2 u_3}.
\end{align*}
We choose $\alpha = 1/7$ and $\beta= 3/14$ (so that $(1/2-t)/\alpha \geq 2$ in the integral defining $\Omega_2$). For this choice we get
\begin{align*}
\Omega_1 = \frac{7F_{\text{lin}}(7/2)}{e^\gamma} =   2\bigg( \frac{3F_{\text{lin}}(3)}{e^\gamma} + \int_3^{7/2} \frac{f_{\text{lin}}(s-1)}{e^\gamma} ds \bigg) \\
= 4+ 4\int_3^{7/2} \frac{\log (s-2)}{s-1} ds \leq 4.19,
\end{align*}
\begin{align*}
\Omega_2 = \frac{7}{2e^{\gamma}} \int_{1/7}^{3/14} f_{\text{lin}} \bigg( 7/2-7t \bigg) \frac{dt}{t} = 7 \int_{1/7}^{3/14}\frac{\log(7/2-7t-1)}{7/2-7t}\frac{dt}{t} \geq  0.279,
\end{align*}
and
\begin{align*}
\Omega_3 =  2 \int_{1/7 < u_1 < u_2 < u_3 < 3/14} \omega \bigg(\frac{1-u_1-u_2-u_3}{u_2} \bigg)\frac{du_1 d u_2 du_3 }{u_1 u_2^2 u_3} \leq 0.076.
\end{align*}
Hence, $\Omega_1 - \Omega_2 + \Omega_3 < 3.99.$ \qed

\begin{remark} The upper bound for the integral in $\Omega_3$ was  computed using Python 7.3; the code is available at \url{http://codepad.org/2emT1dHN}. The choice of exponents $\alpha=1/7$ and $\beta=3/14$ has not been optimized since this is not relevant to our application.
\end{remark}

\section{Modified Maynard-Tao sieve} \label{maysec}
We are now ready to prove the following version of the Maynard-Tao sieve, which is modelled after \cite[Theorem 4.3]{BFM}:
\begin{prop} \label{strong} \emph{\textbf{(Modified Maynard-Tao sieve).}} Let $K$ be a sufficiently large multiple of $4.$ Let $\epsilon > 0$ be sufficiently small. Then for all sufficiently large $N$ the following holds: 

Let $Z_{N^{4\epsilon}}$ be as in (\ref{Z}) and define
\begin{align*}
W := \prod_{\substack{p \leq \epsilon \log N \\ p \, \nmid Z_{N^{4\epsilon}}}} p;
\end{align*}
Let $\HH = \{h_1, \dots, h_K\} \subseteq [0,N]$ be an admissible $K$-tuple such that
\begin{align*}
P^+ \bigg( \prod_{1 \leq i < j\leq K} (h_j-h_i) \bigg) \leq \epsilon \log N
\end{align*}
Let $b$ be an integer such that 
\begin{align*}
\bigg( \prod_{j=1}^K (b+h_j) , W\bigg) =1.
\end{align*}
Let
\begin{align*}
\HH = \HH_1 \cup \HH_2 \cup \HH_3 \cup \HH_{4}
\end{align*}
be a partition of $\HH$ into four sets of equal size. Then there is an integer $n \in [N,2N]$ with $n \equiv b \, (W)$ such that $n+\HH_i$ contains a prime number for at least two distinct indices $i \in \{1,2,3,4\}.$
\end{prop}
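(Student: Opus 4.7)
The plan is to adapt the proof of \cite[Theorem 4.3]{BFM} and its Pintz refinement, with Proposition~\ref{pairs} as the key new input. Set $a := 100$, so $\lceil 3.99 a \rceil + 1 = 4a$, and assume $K$ is a sufficiently large multiple of $4a$. The first step is to refine the given four-partition by subdividing each $\HH_i$ into $a$ equal sub-tuples $\HH_{ij}$, $j=1,\dots,a$, so $|\HH_{ij}|=K/(4a)$. If I can produce $n \in (N, 2N]$ with $n \equiv b \, (W)$ such that at least $a+1$ of the $4a$ sets $n+\HH_{ij}$ contain a prime, then, since each group $\HH_i$ consists of only $a$ sub-tuples, pigeon-hole forces a prime to appear in $n+\HH_i$ for at least two distinct $i\in\{1,2,3,4\}$, as desired.

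To produce such $n$, I use the Maynard-Tao sieve weights $w(n):=\sum_{d_i \mid n+h_i}\lambda_{d_1,\dots,d_K}$ from~(\ref{l1})--(\ref{l2}) with symmetric smooth functions $F_{\ell,j}$, and set $r_{ij}(n):=1_{\{(n+\HH_{ij}) \cap \PP \neq \emptyset\}}$. I consider
\[
T := \sum_{\substack{N<n\leq 2N \\ n \equiv b \, (W)}} \bigg(\sum_{i=1}^4 \sum_{j=1}^a r_{ij}(n) - a\bigg) w(n)^2,
\]
and reduce the claim to showing $T > 0$. The pointwise two-term Bonferroni inequality
\[
r_{ij}(n) \geq \sum_{h \in \HH_{ij}} 1_\PP(n+h) - \sum_{\substack{h,h' \in \HH_{ij} \\ h<h'}} 1_\PP(n+h)1_\PP(n+h')
\]
(which holds for every $n$, as one checks by inspection on the cases $M_{ij}(n) := \sum_{h \in \HH_{ij}} 1_\PP(n+h) \in \{0,1,2\}$ and $M_{ij}(n) \geq 3$) gives $T \geq S_1 - S_2 - aS_0$, where $S_0 := \sum_n w(n)^2$, $S_1 := \sum_h \sum_n 1_\PP(n+h) w(n)^2$, and $S_2 := \sum_{i,j} \sum_{h<h' \in \HH_{ij}} \sum_n 1_\PP(n+h) 1_\PP(n+h') w(n)^2$ (all sums restricted to $N<n\leq 2N$, $n \equiv b \, (W)$).

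For $S_0$ and $S_1$ I apply the standard Maynard-Tao asymptotic evaluation, valid in the present $W$-trick setup ($W \leq N^\epsilon$) thanks to Proposition~\ref{bv}. For each individual pair sum I invoke Proposition~\ref{pairs}, giving the upper bound $(3.99 + \mathcal{O}(\delta))(N/W) B^{-K} L_K(F)$. The crucial feature of the sub-partitioning is that $S_2$ only counts pairs lying within a single sub-tuple $\HH_{ij}$: the total number of such pairs is $\sum_{i,j}\binom{|\HH_{ij}|}{2}=4a\binom{K/(4a)}{2}$, which is only a fraction $\frac{1}{4a}(1+o(1))$ of $\binom{K}{2}$. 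Following the quantitative argument in the proof of \cite[Theorem 4.3]{BFM}, now with $3.99$ in place of $4$, the positivity $T>0$ reduces to an inequality between the sieve functionals $M_K(F)$, $L_K(F)$, $I_K(F)$ in which $3.99$ appears exactly where Selberg's sieve would yield $4$. Since $3.99<4$, Maynard's optimization of the $F_{\ell,j}$ yields a singleton gain $S_1$ large enough to dominate $S_2 + aS_0$ for $K$ sufficiently large and $\delta$ sufficiently small, giving $T>0$. The main obstacle is this final bookkeeping of the sieve functionals with the improved constant; the strict inequality $3.99<4$ supplied by Chen's sieve is precisely what allows four partition classes here, whereas Selberg's $A=4$ forced Pintz to use five.
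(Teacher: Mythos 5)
Your reduction to $\lceil 3.99a\rceil+1=4a$ sub-tuples and the pigeon-hole step, as well as the deployment of Propositions \ref{bv} and \ref{pairs}, all match the paper. However, there is a genuine gap in the core positivity argument: you invoke the two-term Bonferroni inequality
\[
r_{ij}(n) \geq \sum_{h \in \HH_{ij}} 1_\PP(n+h) - \sum_{\substack{h,h' \in \HH_{ij} \\ h<h'}} 1_\PP(n+h)1_\PP(n+h'),
\]
which in Pintz's notation corresponds to fixing $\mu=\mu'=1$. After inserting the Maynard-Tao asymptotics (writing $Y=\rho\delta\log K$ and $M=4a$), the resulting quantity to be made positive is approximately
\[
Y - a - \frac{3.99\,Y^2}{2M}.
\]
Maximizing over $Y$ gives $Y=M/3.99$ and value $\tfrac{M}{2\cdot3.99}-a$, which is positive only when $M>7.98\,a$. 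So with $M=4a$ your expression is strictly negative no matter how $K$, $\delta$, and the $F_{\ell,j}$ are chosen: Chen's improvement from $4$ to $3.99$ only shifts the critical threshold from $8a$ to $7.98a$, not down to $4a$. In other words, the Bonferroni route is essentially the original BFM argument and yields the nine-point result, not the four-point one.

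What you are missing is the Pintz refinement that the paper uses in proving Proposition \ref{weak}: rather than pointwise bounding $1_{\{v_j>0\}}$ from below, one considers $\sum_j\bigl(v_j-\mu\binom{v_j}{2}\bigr)$ for a free parameter $\mu>0$ and observes that this sum being $>\mu'a$, where $\mu'=\max_v\bigl(v-\mu\binom{v}{2}\bigr)$, already forces more than $a$ nonempty sub-tuples. Choosing $\mu=1/L$ with $L=M$ gives $\mu'=(1+L)/2$, and the penalty in the pair term picks up an extra factor of $1/L$, which (after optimizing $Y=aL M/(M-1)$) halves the critical ratio $M/a$ from $2\cdot3.99$ to $3.99$ and makes $\mathfrak{S}=\tfrac{a}{2(M-1)}>0$ with $M=4a$. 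Without this tunable $\mu$ the bookkeeping cannot close at $M=4a$, so your claim that "the positivity $T>0$ reduces to an inequality... which Maynard's optimization yields" is where the argument breaks.
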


To prove the above proposition we will show that it suffices to prove the following seemingly weaker
\begin{prop} \label{weak}  Let $a \geq 1$ be an integer and let $K$ be a sufficiently large multiple of $\lceil 3.99a \rceil +1.$ Let $\epsilon > 0$ be sufficiently small. Then for all sufficiently large $N$ the following holds: 

Let $Z_{N^{4\epsilon}}$ be as in (\ref{Z}) and define
\begin{align*}
W := \prod_{\substack{p \leq \epsilon \log N \\ p \nmid Z_{N^{4\epsilon}}}} p.
\end{align*}
Let $\HH = \{h_1, \dots, h_K\} \subseteq [0,N]$ be an admissible $K$-tuple such that 
\begin{align*}
P^+ \bigg( \prod_{1 \leq i < j\leq K} (h_j-h_i) \bigg) \leq \epsilon \log N
\end{align*}
Let $b$ be an integer such that 
\begin{align*}
\bigg( \prod_{j=1}^K (b+h_j) , W\bigg) =1.
\end{align*}
Let
\begin{align*}
\HH = \HH_1 \cup \HH_2 \cup \cdots \cup \HH_{\lceil 3.99a \rceil+1}
\end{align*}
be a partition of $\HH$ into $\lceil 3.99a \rceil+1$ sets of equal size. Then there is an integer $n \in [N,2N]$ with $n \equiv b \, (W)$ and a set of $a+1$ distinct indices $\{j_1,j_2,\dots, j_{a+1}\} \subseteq \{1,2,\dots,\lceil 3.99a \rceil+1\}$ such that $n+\HH_j$ contains a prime number for every $j \in \{j_1,j_2,\dots, j_{a+1}\}.$
\end{prop}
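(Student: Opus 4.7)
The plan is to combine the Maynard-Tao sieve with a Bonferroni-type reduction, leveraging the improved pair bound of Proposition \ref{pairs}. Set $M := \lceil 3.99a \rceil + 1$, and for each $i \in \{1, \ldots, M\}$ define $P_i(n) := \sum_{h \in \HH_i} 1_\PP(n+h)$ and $T(n) := \#\{i : P_i(n) \geq 1\}$; the goal is to find $n \in (N, 2N]$ with $n \equiv b\,(W)$ and $T(n) \geq a+1$. Bonferroni's inequality gives $1_{P_i(n) \geq 1} \geq P_i(n) - \binom{P_i(n)}{2}$ for each $i$, and summing over $i$ yields
\begin{align*}
T(n) \;\geq\; \sum_{h \in \HH} 1_\PP(n+h) \;-\; \sum_{i=1}^M \sum_{\substack{h,h' \in \HH_i \\ h<h'}} 1_\PP(n+h) 1_\PP(n+h').
\end{align*}
Hence it suffices to exhibit non-negative weights $w(n) \geq 0$ supported on $n \equiv b\,(W)$, $N < n \leq 2N$, with $\sum_n w(n) > 0$, satisfying
\begin{align*}
\sum_n w(n) \sum_h 1_\PP(n+h) \;-\; \sum_{i=1}^M \sum_{\substack{h,h' \in \HH_i \\ h<h'}} \sum_n w(n) 1_\PP(n+h) 1_\PP(n+h') \;>\; a \sum_n w(n),
\end{align*}
because this forces some $n$ in the support to have $T(n) > a$, hence $T(n) \geq a+1$.

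We take $w(n) := \bigl(\sum_{d_i \mid n+h_i}\lambda_{d_1,\ldots,d_K}\bigr)^2$, the Maynard-Tao weights from (\ref{l1}), with the underlying function $F$ chosen symmetric in its $K$ variables. By Proposition \ref{bv}, the Maynard-Tao asymptotics from \cite[Lemma 4.6 (i)--(ii)]{BFM} extend to our setting with $W \ll N^\epsilon$, giving
\begin{align*}
\sum_n w(n) \sim \frac{N}{W}B^{-K} I_K(F), \qquad \sum_n w(n) 1_\PP(n+h_m) \sim \frac{N}{W}B^{-K+1} J_K(F)
\end{align*}
uniformly in $m \in \{1,\ldots,K\}$ (by symmetry of $F$). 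Proposition \ref{pairs} gives the uniform pair bound $\sum_n w(n) 1_\PP(n+h)1_\PP(n+h') \leq (3.99 + \mathcal{O}(\delta))\frac{N}{W} B^{-K} L_K(F)$ for $h \neq h'$ in $\HH$. Substituting and dividing the target inequality by $\sum_n w(n)$ reduces the problem to exhibiting a symmetric $F$ (and $K$ a sufficiently large multiple of $M$) such that
\begin{align*}
\frac{K B J_K(F)}{I_K(F)} \;-\; (3.99 + \mathcal{O}(\delta)) M\binom{K/M}{2} \frac{L_K(F)}{I_K(F)} \;>\; a.
\end{align*}

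The remaining step, and the main obstacle, is the variational construction of such an $F$, carried out essentially as in \cite[Section 4]{BFM} and \cite{Pintz}: we choose $F$ (for example of the sum-of-products form $\sum_j \prod_\ell F_{\ell,j}$ optimized as in Maynard's theorem) so that the single-prime ratio $KB J_K(F)/I_K(F)$ is as large as needed, while $L_K(F)/I_K(F)$ stays adequately controlled (in a product regime, by $(J_K/I_K)^2$ up to bounded constants). The decisive quantitative point is that the pair coefficient $3.99$ from Chen's sieve, being strictly less than the coefficient $4$ coming from Selberg's sieve in \cite{BFM}, relaxes the critical threshold from $M > 4a$ to $M > 3.99a$, which is exactly guaranteed by the hypothesis $M = \lceil 3.99a \rceil + 1$. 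Applied with $a = 100$, this gives $M = 400 = 4a$, the ratio required for the pigeon-hole reduction of Proposition \ref{strong} to Proposition \ref{weak} outlined in the introduction.
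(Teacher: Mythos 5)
Your reduction sets up the right framework — sieve-weighted Bonferroni plus Maynard-Tao asymptotics and the uniform pair bound of Proposition~\ref{pairs} — but there is a genuine quantitative gap: the plain Bonferroni inequality you use is too lossy by a factor of two, and it cannot deliver the claimed threshold $M = \lceil 3.99a\rceil+1$.

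Concretely, your inequality $1_{P_i(n)\geq 1} \geq P_i(n) - \binom{P_i(n)}{2}$ corresponds to taking $\mu=1$ in the Pintz-style bookkeeping. Feeding your reduced inequality
\begin{align*}
\frac{K B J_K(F)}{I_K(F)} \;-\; (3.99 + \mathcal{O}(\delta))\, M\binom{K/M}{2} \frac{L_K(F)}{I_K(F)} \;>\; a
\end{align*}
into the Maynard-Tao optimization (with $J_K/I_K \approx \rho\delta\log K/K$ and $L_K/I_K \approx (\rho\delta\log K/K)^2$), and writing $Y := \rho\delta\log K$, the left side becomes roughly $Y - \tfrac{3.99}{2M}Y^2$, which is maximized at $Y = M/3.99$ with value $M/(2\cdot 3.99)$. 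This forces $M > 2\cdot 3.99\,a = 7.98a$, not $M > 3.99a$. The same computation with $A=4$ gives BFM's $M > 8a$ (hence their nine-number statement), not the "$M > 4a$" you attribute to them; you have conflated the gain from Chen's sieve with the separate and larger gain from Pintz's refinement.

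The missing idea is Pintz's refined weighting: rather than Bonferroni, one considers $\sum_j \bigl(P_j(n) - \mu\binom{P_j(n)}{2}\bigr)$ for a parameter $\mu < 1$, together with the observation that if at most $a$ of the $P_j(n)$ are $\geq 1$, this sum is $\leq \mu' a$ where $\mu' := \max_{v\in\N}\bigl(v - \mu\binom{v}{2}\bigr)$. Taking $\mu = 1/L$ gives $\mu' = (L+1)/2$, and the threshold becomes $M > 3.99a\,(L+1)/L$, which tends to $M > 3.99a$ as $L\to\infty$ (the paper takes $L=M$, which already suffices for $M\geq 3.99a+1$, yielding $\mathfrak{S} \geq a/(2(M-1)) > 0$ in their notation). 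Without this $\mu$-$\mu'$ trade-off, your argument would at best prove the proposition with $\lceil 7.98a\rceil + 1$ sets, which is not enough to deduce Proposition~\ref{strong} with four sets.
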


\emph{Proof of Proposition \ref{strong} using Proposition \ref{weak}.}
We take $a=100$  so that  $\lceil 3.99a \rceil+1 =4a.$ By taking a larger $K$ if necessary, we may suppose that $K$ is a sufficiently large multiple of $4a$. Given a partition $\HH = \HH_1 \cup \HH_2 \cup \HH_3 \cup \HH_{4}$ as in Proposition \ref{strong}, we take a further partition
\begin{align*}
\HH_i = \HH_{i1} \cup \HH_{i2} \cup \cdots \cup \HH_{ia}
\end{align*}
into sets of equal sizes for all $i \in \{1,2,3,4\}.$ Then by Proposition \ref{weak} there is an integer $n \in [N,2N]$ with $n \equiv b \, (W)$ so that for at least $a+1$ distinct sets $\HH_{ij}$ the set $n+ \HH_{ij}$ contains a prime number.  By the pigeon-hole principle this implies that $n+\HH_i$ contains a prime number for at least two distinct indices $i \in \{1,2,3,4\}.$ \qed
\vspace{7pt}

\emph{Proof of Proposition \ref{weak}.}
We use Pintz's refined version of the argument in \cite{BFM} (cf. proof of \cite[Theorem 3]{Pintz} and especially \cite[Theorem 5.4]{BF}): using the notations of \cite{BF}, let us denote  $M:=\lceil 3.99a \rceil+1$, and let $\mu, \mu'$ be positive real numbers with (defining $\binom{1}{2}=0$)
\begin{align} \label{mu}
\mu' = \max_{v \in \N} \bigg(v- \mu \binom{v}{2} \bigg).
\end{align}
For any integer $n$ consider
\begin{align} \label{quant}
\sum_{j=1}^M \bigg( \sum_{h \in \HH_j} 1_\PP(n+h) - \mu \sum_{\substack{\{h,h'\} \subseteq \HH_j \\ h \neq h'}} 1_\PP(n+h)1_\PP(n+h') \bigg).
\end{align}
If there are at most $a$ indices $j$ such that $n+\HH_j$ contains a prime, then the sum (\ref{quant}) is at most $\mu' a$. Hence, if
\begin{align*}
 \sum_{h \in \HH} 1_\PP(n+h)-\mu' a - \mu \sum_{j=1}^M \sum_{\substack{\{h,h'\} \subseteq \HH_j \\ h \neq h'}} 1_\PP(n+h)1_\PP(n+h') \, > \,0,
\end{align*}
then there are at least $a+1$ distinct indices $j$ such that $n+\HH_j$ contains a prime. Therefore, the proposition follows once we show that
\begin{align*}
\sum_{\substack{N<n\leq 2N \\ n \equiv b \, (W)}}\bigg( \sum_{h \in \HH} 1_\PP(n+h)-\mu' a - \mu \sum_{j=1}^M \sum_{\substack{\{h,h'\} \subseteq \HH_j \\ h \neq h'}} 1_\PP(n+h)1_\PP(n+h')\bigg) \bigg(\sum_{\substack{d_1, \dots, d_K \\ d_i | n+h_i}} \lambda_{d_1, \dots, d_K} \bigg)^2 \, > \,0.
\end{align*}
Let $\Sigma$ denote the above sum. Using \cite[Lemma 4.6 (i),(ii)]{BFM} to evaluate the first two sums, and Proposition \ref{pairs} to bound the third,  we obtain that $\Sigma$ is bounded from below by  
\begin{align*}
(1+\mathcal{O}(\delta))\frac{N}{WB^K} \bigg( K J_K(F) - \mu' a I_K(F) - 3.99 \mu M \binom{K/M}{2} L_K(F)  \bigg),
\end{align*}
where $I_K(F)$, $J_K(F)$ and $L_K(F)$ are the integrals in \cite[Lemma 4.6]{BFM} ($L_K(F)$ is the same as in (\ref{Lint}) above). By \cite[Lemma 4.7]{BFM}, for any given $\rho \in (0,1)$ there is a choice of $F$ such that
\begin{align*}
J_K(F) &\geq (1+ \mathcal{O}(\log^{-1/2} K)) \frac{\rho \delta\log K}{K} I_K(F), \\
L_K(F) & \leq (1+ \mathcal{O}(\log^{-1/2} K)) \bigg(\frac{\rho \delta\log K}{K} \bigg)^2 I_K(F).
\end{align*}
Thus, we have 
\begin{align} \label{Sigma}
\Sigma \geq  \mathfrak{S}(1+\mathcal{O}(\delta)) N W^{-1}B^{-K} I_K(F), 
\end{align}
where 
\begin{align*}
\mathfrak{S} := \rho  \delta \log K - \mu' a - 3.99 \mu M  \binom{K/M}{2}\bigg(\frac{\rho\delta \log K}{K} \bigg)^2,
\end{align*}
if we pick $K$ large enough so that $\log^{-1/2} K < \delta.$ Choosing $\mu=1/L$ for some positive integer $L$ we observe that $\mu' =  (1+L)/2,$ the maximum (\ref{mu}) being obtained at $v= L$ and $v=1+L$. Define the quantity $X$ by $XM: = \rho \delta \log K.$ Then by using $3.99 \leq (M-1)/a$ we obtain
\begin{align*}
\mathfrak{S} &= XM - \frac{1+L}{2} a - 3.99   \frac{M}{L}\binom{K/M}{2}\bigg(\frac{XM}{K} \bigg)^2 \\
& \geq  XM - \frac{1+L}{2} a - \frac{M-1}{a} \frac{M}{L} \frac{K^2}{2M^2} \bigg(\frac{XM}{K} \bigg)^2 \\
&  =   XM - \frac{1+L}{2}a -  \frac{M-1}{a}\frac{X^2 M}{2L}  =  \frac{a}{2(M-1)}>0,
\end{align*}
for $X=aL/(M-1)$ and $L=M,$ requiring that $K$ is large enough so that $\rho < 1$ for this choice of $X$.
  \qed

\section{Proof of Theorem \ref{maint}} \label{mainsec}
Theorem \ref{maint} now follows by the same argument as in \cite[Section 6]{BFM}, using our Proposition \ref{strong} in place of \cite[Theorem 4.3]{BFM}; for this we need the modified Erd\"os-Rankin construction given by \cite[Lemma 5.2]{BFM} which states:
\begin{lemma} \label{erlemma} Let $K \geq 1$ and $\beta_K \geq \beta_{K-1} \geq \cdots \geq \beta_1 \geq 0.$ Then there is a real number $y(\bm{\beta},K)$ such that the following holds:

Let $x,y,z$ be any real numbers such that $x \geq 1$, $y \geq y(\bm{\beta},K)$, and
\begin{align*}
2y(1+(1+\beta_K)x)\leq 2z \leq y (\log_2 y)(\log_3 y)^{-1}.
\end{align*}
Let $\mathcal{Z}$ be any (possibly empty) set of primes such that for any $q \in \mathcal{Z}$ we have
\begin{align*}
\sum_{p\in \mathcal{Z}, \, p \geq q} 1/p \ll 1/q \ll 1/\log z.
\end{align*}
Then there is a set of integers $\{a_p:p\leq y, \, \, p \notin \mathcal{Z}\}$ and an admissible $K$-tuple $\{h_1,h_2,\dots,h_K\}$ such that
\begin{align*}
\{h_1,h_2,\dots,h_K\} &= ((0,z] \cap \Z) \setminus \bigcup_{p \leq y, \, p \notin \mathcal{Z}} \{m: m \equiv a_p \quad (p) \}, \\
P^+ \bigg( \prod_{1 \leq i < j\leq K} (h_j-h_i) \bigg) & \leq y,
\end{align*}
and for all $i=1,2,\dots,K$
\begin{align*}
h_i = \beta_i xy +y + \mathcal{O}\left( y e^{-\log^{1/4} y}\right).
\end{align*}
\end{lemma}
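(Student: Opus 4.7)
The plan is to execute a modified Erd\H{o}s-Rankin construction in which the residue classes $a_p$ are tuned so that the sieve leaves behind exactly $K$ integers, one close to each target $m_i := \beta_i xy + y$. To that end I would introduce ``approved intervals'' $I_i := [m_i - w, m_i + w]$ with $w := y e^{-\log^{1/4} y}$, noting that the upper and lower bounds on $z$ imply that the $I_i$ are pairwise disjoint and contained in $(0,z]$. The task then becomes to choose the $a_p$ so that: (i) every integer in $(0,z]\setminus \bigcup_i I_i$ lies in some class $a_p \pmod p$; (ii) exactly one integer in each $I_i$ escapes all of the $a_p$; and (iii) the resulting tuple has $y$-smooth pairwise differences.

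For (i) I would first perform a \emph{coarse sieve} using the Erd\H{o}s-Rankin method in the refined form of Iwaniec and Maier-Pomerance: for primes $p \in (\log^2 y, y_0]$ with $y_0$ slightly less than $y$, choose each $a_p$ greedily so as to cover as many integers in $(0,z]\setminus \bigcup_i I_i$ as possible; a standard estimate then bounds the number of residual survivors outside $\bigcup_i I_i$ by $O(z/\log^A y)$ for any fixed $A > 0$. A subsequent \emph{fine sieve} using the remaining primes $p \in (y_0, y]\setminus \mathcal{Z}$ --- which still forms a set of density $\asymp 1/\log y$, since the hypothesis $1/q \ll 1/\log z$ forces every prime of $\mathcal{Z}$ to be large --- is used to kill the residual survivors outside $\bigcup_i I_i$ one by one, and simultaneously to trim each $I_i$ down to a single survivor. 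Admissibility is then automatic: primes $p \leq y$ with $p \notin \mathcal{Z}$ are handled explicitly, while for $p > y$ or $p \in \mathcal{Z}$ we have $K < p$ by the assumption $y \geq y(\bm{\beta}, K)$.

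The main obstacle is ensuring the smoothness condition (iii). Differences within a single $I_i$ have size at most $2w < y$ and so are trivially $y$-smooth; the delicate case is cross-interval differences, which are of order $(\beta_j - \beta_i)xy$ and may well contain prime factors larger than $y$. Here the freedom in positioning each $h_i$ within $I_i$ must be exploited: by Hildebrand-Tenenbaum type estimates, a positive proportion of integers in an interval of length $w = y e^{-\log^{1/4} y}$ around any given real of size at most $z$ is $y$-smooth. An iterative choice --- fix $h_1$, then pick $h_2 \in I_2$ so that $h_2 - h_1$ is $y$-smooth, then pick $h_3 \in I_3$ so that both $h_3 - h_1$ and $h_3 - h_2$ are $y$-smooth, and so on --- succeeds provided $K$ is small compared to $\log y$, which holds since $y \geq y(\bm{\beta}, K)$; one must then verify that this smoothness-driven selection is compatible with the prior sieve, which can be arranged by reserving a small subfamily of the primes in $(y_0, y]\setminus \mathcal{Z}$ at the fine-sieve stage to enforce precisely the chosen survivor inside each $I_i$.
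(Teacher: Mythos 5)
A preliminary clarification: this lemma is quoted verbatim from \cite[Lemma 5.2]{BFM} and is not proved in the present paper, so there is no internal proof to compare against --- the paper simply invokes the construction of \cite{BFM}. Taken on its own terms, the overall shape of your sketch (approved intervals near each $m_i = \beta_i xy + y$, a coarse Erd\"os-Rankin sieve, a fine sieve with primes near $y$, and an iterative choice of $h_i$ with pairwise $y$-smooth differences) is the right kind of argument, and your treatment of admissibility and of the excluded set $\mathcal{Z}$ (whose elements are $\gg \log z > K$) is correct.

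The genuine gap is in the compatibility between the coarse sieve and the smoothness requirement $P^+(h_j - h_i) \leq y$, and it cannot be repaired by ``reserving a subfamily of primes in $(y_0,y]$.'' You run the coarse sieve first, choosing each $a_p$ for $\log^2 y < p \leq y_0$ greedily to thin out $(0,z]\setminus\bigcup_i I_i$, and only afterwards pick $h_i \in I_i$. But every $h_i$ you pick must already be a survivor of that coarse sieve, and a greedy choice aimed at the exterior of $\bigcup_i I_i$ gives no control over what survives \emph{inside} $I_i$: since $\sum_{\log^2 y < p \leq y_0} 1/p \asymp \log_2 y \gg 1$, the aggregate measure of the chosen residue classes inside $I_i$ far exceeds $|I_i|$, so nothing in your argument prevents the coarse sieve from covering $I_i$ entirely, or from covering exactly those residues that would have produced $y$-smooth differences. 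The fine-sieve primes cannot undo this --- the damage is done before they are touched. Closing the gap requires either choosing $h_1,\dots,h_K$ \emph{before} any sieving and then constraining every $a_p$ to avoid $\{h_1,\dots,h_K\}$ (a loss of only $K/p$ per prime, which is negligible), or, better, replacing the density-based smoothness selection by a structural device that makes the smoothness automatic. A clean such device is to force every $h_i$ into a single residue class modulo $Q := P(t)$, with $t$ chosen so that $\log_2 y \ll Q \ll y e^{-\log^{1/4} y}/K$; then each difference equals $Qm$ with $0 < m \leq z/Q < y$, whence $P^+(h_j - h_i) \leq \max(t,m) < y$ with no smooth-number counting whatsoever, the congruence is compatible with the sieve (simply take $a_p$ different from the fixed residue for $p \leq t$), and the step $Q$ is far below the tolerance $\mathcal{O}(y e^{-\log^{1/4}y})$ so the positional constraint is still met. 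As a secondary remark, the claim that a greedy coarse sieve over $(\log^2 y, y_0]$ alone leaves $\mathcal{O}(z/\log^A y)$ survivors for arbitrary $A$ overstates what greed gives ($\asymp z(\log_2 y)/\log y$ is what falls out directly); the stronger saving in the references you cite comes from exploiting the multiplicative structure of the survivor set, and you would need to invoke that explicitly for the fine-sieve bookkeeping to balance.
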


Given $\beta_1 \leq  \beta_2 \leq \beta_3 \leq \beta_4$ as in Theorem \ref{maint} and any sufficiently large $N$, we will apply the above lemma with 
\begin{align*}
x&:= 1/\epsilon, \quad \quad y:= \epsilon \log N, \quad \quad z:=y (\log_2 y)(2\log_3 y)^{-1}, \\
\bm{\beta}&:= \{\beta_1,\dots,\beta_1, \beta_2,\dots,\beta_2,\beta_3,\dots,\beta_3,\beta_4,\dots,\beta_4,\},
\end{align*}
where $\epsilon>0$ is sufficiently small and  each $\beta_i$ is repeated $K/4$ times for some sufficiently large $K \equiv 0 \,(4)$; by translation we may assume $\beta_1\geq 0.$  We let $\mathcal{Z}:= \{Z_{N^{4\epsilon}}\}$ if $Z_{N^{4\epsilon}} > 1,$ and $\mathcal{Z}=\emptyset$ otherwise (recall (\ref{Z}) for the definition of $Z_T$). The conditions of Lemma \ref{erlemma} are satisfied, so we get a set of integers $\{a_p: p\leq y, \, \, p \neq Z_{N^{4\epsilon}}\}$ and an admissible $K$-tuple $\HH$ such that
\begin{align}
\label{er}&\HH = ((0,z] \cap \Z) \setminus \bigcup_{p \leq \epsilon \log N, \, p \neq Z_{N^{4\epsilon}}} \{m: m \equiv a_p \quad (p) \}, \\ \nonumber
& P^+ \bigg( \prod_{1 \leq i < j\leq K} (h_j-h_i) \bigg) \leq \epsilon \log N,
\end{align}
such that there is a partition $\HH=\HH_1 \cup \HH_2 \cup\HH_3 \cup\HH_4$ into sets of equal sizes so that for all $i=1,2,3,4$ and for all $h \in \HH_i$
\begin{align*}
h = (\beta_i+\epsilon + o(1)) \log N.
\end{align*}
Let $b$ be an integer satisfying
\begin{align*}
b \equiv -a_p \quad (p) \quad \quad \text{for all} \quad \quad  p \leq \epsilon \log N, \, p \neq Z_{N^{4\epsilon}}.
\end{align*}
Then the assumptions of Proposition \ref{strong} are satisfied, so that the proposition yields two indices $1\leq i<j \leq 4$ and an integer $n \in [N,2N]$  with $n \equiv b \, (W)$ such that both  $n+\HH_i$ and $n+\HH_j$ contain a prime number. Furthermore, since $n \equiv b \, (W),$ by (\ref{er})  we have
\begin{align*}
\PP \cap (n,n+z] \subseteq n + \HH.
\end{align*}
Thus, for some $1\leq i<j \leq 4$, there are consecutive primes $p,q \in n+ \HH$ such that
\begin{align*}
p =(\beta_i+\epsilon + o(1)) \log N, \quad \quad \text{and} \quad \quad q = (\beta_j+\epsilon + o(1)) \log N.
\end{align*}
Since this holds for all sufficiently large $N$, we obtain that for some $1\leq i<j \leq 4$ we have $\beta_j-\beta_i \in \LL$.
 \qed

\section{A correction to the proofs of Lemmata \ref{s1} and \ref{s2}.}
The above text agrees with the published version of the article. Unfortunately there is a mistake in the proofs of Lemmata \ref{s1} and \ref{s2} (thanks to Jacques Benatar for pointing this out to me). Namely, in the remainder $r_d$, if we write
\begin{align*}
r_d = \sum_{\substack{d_1, \dots, d_K \\ d'_1,\dots d_K' \\ d_j=d_j'=d_\ell=d_\ell'=1}} \lambda_{d_1,\dots, d_K}  \lambda_{d_1',\dots, d_K'} \bigg( \sum_{\substack{N < n \leq 2N \\ n \equiv b \, (W) \\n \equiv -h_\ell \, (d) \\ n \equiv -h_i \, ([d_i,d_i'])}} 1_{\PP}(n+h_j) - g(d) \sum_{\substack{N < n \leq 2N \\ n \equiv b \, (W) \\ n \equiv -h_i \, ([d_i,d_i'])}} 1_{\PP}(n+h_j) \bigg),
\end{align*}
then the first sum in the brackets is empty if $(d,d_i)>1$ for some $i$ but the second sum is not empty. Note that this problem does not happen in our argument for $S_3$ (or in the proof of \cite[Lemma 6(iii)]{BFM}) where the Selberg sieve is used, thanks to the fact that the Selberg sieve wieghts are readily of the same form as the Maynard-Tao sieve weights. That is, the linear sieve we have used is not immediately compatible with the Maynard-Tao sieve. In this section we explain how to fix this issue. As is so often the case, the fundamental lemma of the sieve comes to the rescue. The idea is to handle small prime factors with  Selberg type sieve weights (in the spirit of the fundamental lemma of the sieve), so that in the linear sieve $g(d)$ and $r_d$ will be supported on numbers with no small prime factors so that the contribution from the part where $(d,d_i)>1$ is negligible.

Yet another problem is caused by the possible prime $Z_{N^{4\epsilon}}$. This is not a problem for the upper bounds of $S_1$ and $S_3$ but for the lower bound $S_2$ we cannot simply ignore $Z_{N^{4\epsilon}}$ as we have done above. This is resolved as follows. For any $y>1$ we define
\begin{align*}
P_0(y) := \prod_{\substack{p \leq y\\ p \nmid Z_{N^{4\epsilon}}}} p.
\end{align*}
In the original sum we write
\begin{align*}
S&:=\sum_{\substack{N < n \leq 2N \\ n \equiv b \, (W)}} 1_{\PP}(n+h_j)1_{\PP}(n+h_\ell)\nu_{\HH,j,\ell} (n)  \\
&\leq  \sum_{\substack{N < n \leq 2N \\ n \equiv b \, (W)}} 1_{\PP}(n+h_j)1_{(n+h_\ell,P_0(Z))=1}\nu_{\HH,j,\ell} (n) .
\end{align*}
We just have to note that the conclusion of Lemma \ref{chen} remains valid if we replace $P(Y),$ $P(Z)$, and $P(q)$ respectively by $P_0(Y),$ $P_0(Z)$, and $P_0(q)$.

Let $y_1:=\exp(\log^{1/3} N)$ and $y_2 := N^{\delta^2}$. Define the Selberg type sieve weights
\begin{align*}
\varrho(n):= \bigg( \sum_{\substack{e|(n,P_0(y_1)) }} \mu(e) G\bigg(\frac{\log e}{\log N}\bigg) \bigg)^2,
\end{align*}
where $G: [0,\infty) \to [0,1]$ is a smooth function supported on $[0,2\delta^2]$ and such that $G(u)=1$ for $u \in [0,\delta^2]$. That is, $G(\log e/\log N)= 1$ for $e \leq y_2$. 

Note that
\begin{align*}
1_{(n,P_0(y_1))=1} = \sum_{d| (n,P_0(y_1))} \mu(d) = \bigg( \sum_{d| (n,P_0(y_1))} \mu(d)\bigg)^2.
\end{align*} 
By definition $\varrho(n)$ is a sieve weight of very high level $y_2$ compared to the hight of shifting $y_1$, and for $(n,P_0(y_1))=1$ we have
\begin{align*}
\varrho(n) = G(0)^2 = 1 = 1_{(n,P_0(y_1))=1}.
\end{align*}
Hence, $\varrho(n)$ is equal to $1_{(n,P_0(y_1))=1}$ except on a very sparse set of integers, namely, integers $n$ which have a factor $d \geq y_2$ such that $d|P_0(y_1)$. Then $d$ has a factor $d_1$ such that $d_1 \in [y_2,y_1 y_2]$, so that we have shown the following.
\begin{lemma}
With the above notations, we have
\begin{align*}
1_{(n,P_0(y_1))=1} = \varrho(n) + O(\tau(n)^2 \psi(n;y_1,y_2))
\end{align*}
where $\psi(n;y_1,y_2)$ is the characteristic function of the set 
\begin{align*}
\{n: \, \exists d | n, \, d \in [y_2,y_1 y_2], \, d|P_0(y_1)\}.
\end{align*}
\end{lemma}
To bound the error term when using the above lemma, we require the following standard bound for the number of exceptionally smooth numbers.
\begin{lemma} \label{smoothlemma} For any $2 \leq z \leq y$ we have
\begin{align*}
\sum_{\substack{n \sim y \\ P^+(n) < z}} 1 \, \ll \, y e^{-u/2},
\end{align*}
where $u:= \log y/\log z.$
\end{lemma}

We are now ready to show the claimed bounds for $S_1$ and $S_2$. Letting
\begin{align*}
P(y_1,Y) := \prod_{y_1 < p \leq Y} p,
\end{align*}
we write
\begin{align*}
1_{(n+h_\ell,P_0(Y))=1} & = 1_{(n+h_\ell,P_0(y_1))=1} 1_{(n+h_\ell,P(y_1,Y))=1} \\
&= \varrho(n+h_\ell) 1_{(n+h_\ell,P(y_1,Y))=1}  + O(\tau(n+h_\ell)^2 \psi(n + h_\ell;y_1,y_2)).
\end{align*}
Note that for $S_1$ we could simply use $1_{(n+h_\ell,P_0(y_1))=1}  \leq \varrho(n)$, but for $S_2$ we need a lower bound. In either case the error term gives a contribution bounded by
\begin{align*}
\ll E(N) := \sum_{\substack{N< n \leq 2N \\ n \equiv b \, (W)}} \psi(n+h_\ell;y_1,y_2) \tau(n+h_\ell)^2 \nu_{\HH,j,\ell} (n) .
\end{align*}
By H\"older's inequality we get
\begin{align*}
E(N)^3 \ll \sum_{\substack{N/2< n \leq 4N \\ n \equiv b+h_\ell \, (W)}}  \tau(n)^{6} \sum_{\substack{N< n \leq 2N \\ n \equiv b \, (W)}}  \nu_{\HH,j,\ell} (n)^3 \sum_{\substack{N/2< n \leq 4N \\ n \equiv b+h_\ell \, (W)}} \psi(n;y_1,y_2)^3  \\
\ll \frac{N^2 \log^{O(1)} N}{W^2} \sum_{\substack{N/2< n \leq 4N \\ n \equiv b+h_\ell \, (W)}} \psi(n;y_1,y_2).
\end{align*}
By assumptions we have $(b+h_\ell,W)=1$ so that $(n,W)=1$, and we get by Lemma \ref{smoothlemma}
\begin{align*}
 \sum_{\substack{N/2< n \leq 4N \\ n \equiv b+h_\ell \, (W)}} \psi(n;y_1,y_2) \ll \sum_{\substack{y_2 \leq d \leq y_1 y_2 \\ d|P_0(y_1) \\ (d,W)=1}} \sum_{\substack{N/2d < n  \leq 4N/d \\ dn \equiv b + h_\ell \, (W)}} 1 \ll \frac{N}{W}\sum_{\substack{y_2 \leq d \leq y_1 y_2 \\ P^+(d) \leq y_1 }} \frac{1}{d} \ll \frac{N}{W} \exp(-(\log^{1/2} N)).
\end{align*}
Hence, for any $C>0$ we have
$E(N) \ll_C  (\log^{-C}N) N /W$,
which is sufficient.

Therefore, it suffices to prove the claimed bounds in Lemmata  \ref{s1} and \ref{s2} for the modified sums
\begin{align*}
S'_1 & := \sum_{\substack{N < n \leq 2N \\ n \equiv b \, (W)}} 1_{\PP}(n+h_j) 1_{(n+h_\ell,P(y_1,Y))=1}\varrho(n) \nu_{\HH,j,\ell} (n)  \quad \quad \text{and}   \\
S'_2 & := \sum_{Y < p \leq Z} \sum_{\substack{N < n \leq 2N \\ n \equiv b \, (W) \\ p | n+h_\ell}} 1_{\PP}(n+h_j)1_{(n+h_\ell,P(y_1,Y))=1} \varrho(n)\nu_{\HH,j,\ell} (n).
\end{align*}
Notice here that crucially $\varrho(n) \geq 0,$ so that we can still apply the linear sieve. We now show how to handle $S'_1$, the details are the same for $S'_2$.
\begin{lemma} We have
\begin{align*}
S'_1 \leq \frac{F_{\text{\emph{lin}}}(1/(2\alpha)) + \mathcal{O}(\delta)}{\alpha e^\gamma}  \frac{N}{W} B^{-K}L_K(F)
\end{align*}
\end{lemma}
Let 
$r_d$ be defined by the equation
\begin{align} 
 \sum_{\substack{N < n \leq 2N \\ n \equiv -h_\ell \, (d)}} 1_{\PP}(n+h_j) 1_{n \equiv b \, (W)}\varrho(n)\nu_{\HH,j,\ell} (n)  = g(d)  \sum_{\substack{N < n \leq 2N}} 1_{\PP}(n+h_j) 1_{n \equiv b \, (W)} \varrho(n)\nu_{\HH,j,\ell} (n)  +r_d,
\end{align}
where $g(d)$ is a multiplicative function, supported on square-free integers, defined by
\begin{align*}
g(p) := \begin{cases} \frac{1}{p-1}, & \text{if} \,  p >y_1 \\
0, & \text{if} \,  p \leq y_1.
\end{cases}
\end{align*}
To handle the error term in the linear sieve upper bound, we write for $(d,P(y_1))=1$
\begin{align*}
r_d &=  \sum_{\substack{N < n \leq 2N \\ n \equiv -h_\ell \, (d)}} 1_{\PP}(n+h_j) 1_{n \equiv b \, (W)} \varrho(n) \nu_{\HH,j,\ell} (n)  - g(d)  \sum_{\substack{N < n \leq 2N}} 1_{\PP}(n+h_j) 1_{n \equiv b \, (W)} \varrho(n)\nu_{\HH,j,\ell} (n)  \\
&= \sum_{e,e' | P_0(y_1)} \delta_e \delta_{e'} \sum_{\substack{d_1, \dots, d_K \\ d'_1,\dots d_K' \\ d_j=d_j'=d_\ell=d_\ell'=1}} \lambda_{d_1,\dots, d_K}  \lambda_{d_1',\dots, d_K'} \\
& \hspace{100pt} \bigg( \sum_{\substack{N < n \leq 2N \\ n \equiv b \, (W) \\n \equiv -h_\ell \, (d [e,e']) \\ n \equiv -h_i \, ([d_i,d_i'])}} 1_{\PP}(n+h_j) - g(d) \sum_{\substack{N < n \leq 2N \\ n \equiv b \, (W) \\ n \equiv -h_\ell \, ( [e,e'])\\ n \equiv -h_i \, ([d_i,d_i']) }} 1_{\PP}(n+h_j) \bigg),
\end{align*}
where we have defined $\delta_e := \mu(e) G(\log e / \log N)$. Again, in the first sum (in the brackets) we have $(d,d_i)=1$. For the second sum we note that if $(d,d_i) > 1$ for some $i$, then  by $(d,P(y_1))=1$ we get $(d,d_i) > y_1$, and the contribution from this can be bounded trivially. Indeed, the  part where $(d,d_i)=c > y_1$ for some $i$ gives a contribution bounded by
\begin{align*}
& \ll \sum_{\substack{ d \leq N^{1/2-4\delta}
\\ (d,P(y_1))=1}} g(d) \sum_{\substack{c|d \\ c> y_1}} \sum_{\substack{N < n \leq 2N \\ n \equiv b \, (W) \\ n \equiv h_i \, (c)}} \bigg( \sum_{\substack{d_1, \dots, d_K \\ d_i | n+h_i \\ d_j=d_\ell =1}} |\lambda_{d_1, \dots, d_K}| \bigg)^2 \\
& \ll \frac{N^{1/2} \log^{O(1)} N}{W^{1/2}} \bigg(\sum_{\substack{ d \leq N^{1/2-4\delta}
\\ (d,P(y_1))=1}} g(d) \sum_{\substack{c|d \\ c> y_1}} \sum_{\substack{N < n \leq 2N \\ n \equiv b \, (W) \\ n \equiv h_i \, (c)}} 1 \bigg)^{1/2}  \\
& \ll \frac{N \log^{O(1)} N}{W} \bigg(\sum_{\substack{ d \leq N^{1/2-4\delta}
\\ (d,P(y_1))=1}} g(d) \sum_{\substack{c|d \\ c> y_1}} \frac{1}{c} \bigg)^{1/2}  \ll  \frac{N \log^{O(1)} N}{W y_1^{1/2}} \ll_C \frac{N }{W \log^C N}.
\end{align*}
Hence, in the error term $r_d$ we may restrict to $(d,d_i)=1$ for all $i$ and use the same argument as in proof of Lemma \ref{s1} to get a sufficient bound for $\sum_{d} |r_d|$.

For the main term we now have show that
\begin{align}  \nonumber
\sum_{\substack{N < n \leq 2N}} 1_{\PP}(n+h_j) 1_{n \equiv b \, (W)} &\varrho(n)\nu_{\HH,j,\ell} (n)  \\ \label{mainevaluation} & = (1+o(1)) \prod_{\epsilon \log N < p \leq y_1}\bigg(1-\frac{1}{p} \bigg) \frac{N}{\phi(W) \log N}  B^{-K+2}L_K(F), 
\end{align}
where the difference compared to (\ref{main}) is the coefficient $\varrho(n)$, which will result in the (expected) extra factor  $\prod_{\epsilon \log N  < p \leq y_1}(1-p^{-1})$ on the right-hand side. This evaluation follows from a similar argument as in the proof of \cite[Lemma 4.6 (iii)]{BFM}. The only difference is that we will need a slightly more general version of \cite[Lemma 4.5]{BFM} (or \cite[Lemma 4.1]{polymath}), due to the fact that in $\varrho(n)$ the variable $e$ is restricted to $e|P_0(y_1)$. More precisely, we need the following (which is applied with $k=K-2$).
\begin{lemma} Let $y_1 := \exp(\log^{1/3}N)$. Let $F_0,\dots, F_k, G_0, \dots G_k : [0,\infty) \to \R$ be fixed smooth compactly supported functions.  Denote $B:=  (\log N) \phi(W)/W$ Then
\begin{align*}
 \sideset{}{'}\sum_{\substack{d_1,\dots,d_k \\ d_1',\dots,d_k' \\ e,e' | P(y_1)}} \frac{\mu(e)\mu(e')}{[e,e']} F_0 \bigg( \frac{\log e}{\log N}\bigg) G_0 \bigg( \frac{\log e'}{\log N}\bigg) \prod_{j=1}^{k} \frac{\mu(d_j) \mu(d_j')}{[d_j,d_j']} F_j \bigg( \frac{\log d_j}{\log N}\bigg) G_j \bigg( \frac{\log d'_j}{\log N}\bigg) \\
 =(c+o(1))B^{-k} \prod_{\epsilon \log N < p \leq y_1} \bigg( 1-\frac{1}{p}\bigg),
\end{align*}
where $\sideset{}{'}\sum$ denotes restriction that $[e,e'],[d_1,d_1'],\dots,[d_k,d_k'], Z_{N^{2\epsilon}} W$ are pairwise coprime, and 
\begin{align*}
c = F_0(0)G_0(0)\prod_{j=1}^k \int F_j(t_j) G_j(t_j) d t_j.
\end{align*}
The same holds if $[e,e']$  and $[d_j,d_j']$ are replaced by $\phi([e,e'])$  and $\phi([d_j,d_j'])$.
\end{lemma}
The proof of this lemma follows by the same argument as the proof of \cite[Lemma 4.1]{polymath}, and we refer to the notations used there (with the exception that $x$ there corresponds to $N$ here). We obtain that the left-hand side is (up to a negligible error term)
\begin{align*}
\int_{-\sqrt{\log N}}^{\sqrt{\log N}} \cdots \int_{-\sqrt{\log N}}^{\sqrt{\log N}} K(\xi_0, \dots, \xi_k, \xi'_0, \dots, \xi'_k) \prod_{j=0}^k f_j(\xi_j) g_j(\xi'_j) d\xi_1 \cdots d\xi_k d\xi'_1 \cdots d\xi_k',
\end{align*}
where
\begin{align*}
K(\xi_0, \dots, \xi_k, \xi'_0, \dots, \xi'_k)=(1+o(1)) L(\xi_0,\xi_0')K(\xi_1, \dots, \xi_k, \xi'_1, \dots, \xi'_k) 
\end{align*}
 with $K(\xi_1, \dots, \xi_k, \xi'_1, \dots, \xi'_k)$ as in \cite[proof of Lemma 4.1]{polymath} and 
\begin{align*}
L(\xi_0,\xi_0') = \frac{\zeta_0 (1 + (2+i\xi_0 + i\xi_0')/\log N)}{\zeta_0 (1 + (1+i\xi_0 )/\log N) \zeta_0 (1 + (1+i\xi_0')/\log N)}
\end{align*}
where
\begin{align*}
\zeta_0(s) := \prod_{\epsilon \log N < p \leq y_1} (1-p^{-s})^{-1}.
\end{align*}
Since $|\xi_j| \leq \sqrt{ \log N}$, we have
\begin{align*}
&\prod_{\epsilon \log N < p \leq y_1} \bigg(1-\frac{1}{p^{1+(1+i \xi_0)/\log N}} \bigg) =\prod_{\epsilon \log N < p \leq y_1} \bigg(1-\frac{1 + O\left(\frac{\log p}{\sqrt{\log N}}\right)}{p} \bigg) \\
&=  \exp\bigg( O\bigg(\sum_{p\leq y_1 }  \frac{\log p}{p \sqrt{\log N}}\bigg)\bigg)\prod_{\epsilon \log N < p \leq y_1} \bigg(1-\frac{1}{p} \bigg) =\exp\bigg(O \bigg( \frac{\log y_1}{\sqrt{\log N}}\bigg) \bigg) \prod_{\epsilon \log N < p \leq y_1} \bigg(1-\frac{1}{p} \bigg)  \\
&= (1+o(1)) \prod_{\epsilon \log N < p \leq y_1} \bigg(1-\frac{1}{p} \bigg)
\end{align*}
by using $y_1 = \exp(\log^{1/3} N)$. The same holds with $1+i \xi_0$ replaced by $1+i \xi'_0$ or $2+i \xi_0 + i \xi'_0$ Hence,
\begin{align*}
L(\xi_0, \xi'_0) = (1+o(1)) \prod_{\epsilon \log N < p \leq y_1} \bigg(1-\frac{1}{p} \bigg),
\end{align*}
and the remainder of the proof is essentially the same as \cite[proof of Lemma 4.1]{polymath} since by definition
\begin{align*}
\int \int f_0(t_0) g_0( t_0') dt_0 dt'_0 = F(0)G(0).
\end{align*}
Similarly as in \cite[Lemma 4.1]{polymath}, the error terms from the factors 1+o(1)  are negligible by the rapid decay of $f_j$ and $g_j$.

Using the above Lemma we get (\ref{mainevaluation}), so that the linear sieve upper bound yields
\begin{align*}
&S'_1 \leq  
(F_{\text{\emph{lin}}}(1/(2\alpha)) +O(\delta)) \bigg(\prod_{p\leq Y} (1- g(p)) \bigg) \bigg(\prod_{\epsilon \log N < p \leq y_1} \bigg( 1-\frac{1}{p}\bigg) \bigg) \frac{N}{W} B^{-K+1} L_K(F) \\
&=(F_{\text{\emph{lin}}}(1/(2\alpha)) +O(\delta)) \bigg(\prod_{y_1 < p\leq Y} \bigg(1-\frac{1}{p-1} \bigg) \bigg) \bigg(\prod_{\epsilon \log N < p \leq y_1} \bigg( 1-\frac{1}{p}\bigg) \bigg) \frac{N}{W} B^{-K+1} L_K(F) \\
&=(F_{\text{\emph{lin}}}(1/(2\alpha)) +O(\delta))  \bigg(\prod_{\epsilon \log N < p \leq Y} \bigg( 1-\frac{1}{p}\bigg) \bigg) \frac{N}{W} B^{-K+1} L_K(F)  \\
&=(F_{\text{\emph{lin}}}(1/(2\alpha)) +O(\delta))  \bigg(\prod_{ p \leq Y} \bigg( 1-\frac{1}{p}\bigg) \bigg) \frac{N}{\phi(W)} B^{-K+1} L_K(F)  \\
&=\frac{F_{\text{\emph{lin}}}(1/(2\alpha)) + O(\delta)}{\alpha e^\gamma}  \frac{N}{W} B^{-K}L_K(F)
\end{align*}
by Merten's theorem since $Y=N^\alpha$.
\bibliography{limitp}
\bibliographystyle{abbrv}

\end{document}